\documentclass[11pt]{article} 
\usepackage[utf8]{inputenc} 
\usepackage[margin=1in]{geometry} 
\usepackage{graphicx} 

\usepackage{booktabs} 
\usepackage{array} 
\usepackage{paralist} 
\usepackage{verbatim} 
\usepackage{mathrsfs}
\usepackage{amssymb}
\usepackage{amsthm}
\usepackage{amsmath,amsfonts,amssymb}
\usepackage{esint}
\usepackage{graphics}
\usepackage{enumerate}
\usepackage{mathtools}
\usepackage{xfrac}
\usepackage{nicefrac}
\usepackage{subcaption}
\usepackage{stmaryrd}
\usepackage[normalem]{ulem}
\usepackage{cancel}
\usepackage{enumitem}
\usepackage{fancyvrb}
\usepackage{bm}

\allowdisplaybreaks[4]

\usepackage[usenames,dvipsnames]{xcolor}
\usepackage[colorlinks=true, pdfstartview=FitV, linkcolor=blue, citecolor=blue, urlcolor=blue]{hyperref}
\usepackage[normalem]{ulem}

\usepackage{tikz}
\usetikzlibrary{calc}
\usepackage{pgf}
\usetikzlibrary{external}
\numberwithin{equation}{section}
\numberwithin{figure}{section}

\newtheorem{theorem}{Theorem}[section]

\newtheorem{corollary}[theorem]{Corollary}
\newtheorem{proposition}[theorem]{Proposition}
\newtheorem{lemma}[theorem]{Lemma}

\theoremstyle{definition}
\newtheorem{definition}[theorem]{Definition}

\newtheorem{remark}[theorem]{Remark}

\usepackage{dsfont}
\usepackage{bbm}

\newcommand{\N}{\mathbb{N}}
\newcommand{\R}{\mathbb{R}}

\newcommand{\cA}{\mathcal{A}}

\newcommand{\cQ}{\mathcal{Q}}

\newcommand{\cL}{\mathcal{L}}

\newcommand\sfK{{\boldsymbol{\mathsf K}}}

\newcommand\sfk{{\boldsymbol{\mathsf k}}}
\newcommand\sfI{{\boldsymbol{\mathsf I}}}
\newcommand\sfV{{\boldsymbol{\mathsf V}}}

\newcommand\sfT{{\boldsymbol{\mathsf T}}}

\newcommand\sfG{{\boldsymbol{\mathsf G}}}
\newcommand\sfb{{\boldsymbol{\mathsf b}}}
\newcommand\sfY{{\boldsymbol{\mathsf Y}}}
\newcommand\sfS{{\boldsymbol{\mathsf S}}}
\newcommand\sfQ{{\boldsymbol{\mathsf Q}}}
\newcommand\sfZ{{\boldsymbol{\mathsf Z}}}
\newcommand\sfH{{\boldsymbol{\mathsf H}}}
\newcommand\sfW{{\boldsymbol{\mathsf W}}}
\newcommand\sfA{{\boldsymbol{\mathsf A}}}

\newcommand{\eps}{\varepsilon}

\newcommand{\1}{\mathbf{1}}

\newcommand{\data}{\textnormal{data}}
\renewcommand{\rho}{\varrho}

\DeclareMathOperator{\supp}{supp}

\newcommand{\esssup}{\operatornamewithlimits{ess\,sup}}
\newcommand{\essinf}{\operatornamewithlimits{ess\,inf}}

\DeclareMathOperator{\tail}{Tail}

\renewcommand{\d}{\mathrm{\,d}}
\def\dxy{\,{\mathrm d}x{\mathrm  d}y}
\def\dxt{\,{\mathrm  d}x{\mathrm d}t}

\def\dxk{\,{\mathrm  d}x{\mathrm  d}k}
\def\dtk{\,{\mathrm  d}t{\mathrm  d}k}

\def\dytau{\,{\mathrm  d}y{\mathrm  d}\tau}
\def\dtaux{\,{\mathrm  d}\tau{\mathrm  d}x}
\def\dxyt{\,{\mathrm d}x{\mathrm  d}y{\mathrm  d}t}

\def\dxtk{\,{\mathrm d}x{\mathrm  d}t{\mathrm  d}k}

\renewcommand{\leq}{\leqslant}
\renewcommand{\geq}{\geqslant}
\renewcommand{\subset}{\subseteq}
\renewcommand{\supset}{\supseteq}

\def\Xint#1{\mathchoice
{\XXint\displaystyle\textstyle{#1}}%
{\XXint\textstyle\scriptstyle{#1}}%
{\XXint\scriptstyle\scriptscriptstyle{#1}}%
{\XXint\scriptscriptstyle\scriptscriptstyle{#1}}%
\!\int}
\def\XXint#1#2#3{{\setbox0=\hbox{$#1{#2#3}{\int}$}
\vcenter{\hbox{$#2#3$}}\kern-.5\wd0}}
\def\dashint{\Xint-}

\def\Xiint#1{\mathchoice
    {\XXiint\displaystyle\textstyle{#1}}%
    {\XXiint\textstyle\scriptstyle{#1}}%
    {\XXiint\scriptstyle\scriptscriptstyle{#1}}%
    {\XXiint\scriptscriptstyle\scriptscriptstyle{#1}}%
    \!\iint}
\def\XXiint#1#2#3{\setbox0=\hbox{$#1{#2#3}{\iint}$}
    \vcenter{\hbox{$#2#3$}}\kern-0.5\wd0}
\def\biint{\Xiint{-\!-}}

\makeatletter
\newcommand\avsuminner[2]{%
  {\sbox0{$\m@th#1\sum$}%
   \vphantom{\usebox0}%
   \ooalign{%
     \hidewidth
     \smash{\,\rule[.23em]{8.8pt}{1.1pt} \relax}%
     \hidewidth\cr
     $\m@th#1\sum$\cr
   }%
  }%
}
\makeatother

\makeatletter
\newcommand\avsuminnerr[2]{%
  {\sbox0{$\m@th#1\sum$}%
   \vphantom{\usebox0}%
   \ooalign{%
     \hidewidth
     \smash{\,\rule[.23em]{6pt}{0.7pt} \relax}%
     \hidewidth\cr
     $\m@th#1\sum$\cr
   }%
  }%
}
\makeatother

\let\originalleft\left
\let\originalright\right
\renewcommand{\left}{\mathopen{}\mathclose\bgroup\originalleft}
\renewcommand{\right}{\aftergroup\egroup\originalright}

\usepackage{titlesec}

\newcommand{\addperiod}[1]{#1.}
\titleformat{\section}{\centering\normalfont\Large}{\thesection.}{0.5em}{}
\titleformat*{\subsection}{\bfseries}
\titleformat{\subsubsection}[runin]{\normalfont\bfseries}{\thesubsubsection.}{0.5em}{\addperiod}
\titleformat*{\paragraph}{\bfseries}
\titleformat*{\subparagraph}{\large\bfseries}

\title{Harnack estimates for the nonlocal Trudinger equation}

\author{Simone Ciani
\thanks{Department of Mathematics of the University of Bologna, Piazza Porta San Donato, 5, 40126 Bologna, Italy.
{\footnotesize \href{mailto:simone.ciani3@unibo.it}{simone.ciani3@unibo.it}. 
}
}
  \and
Kenta Nakamura
\thanks{Institute of Natural Sciences, Nihon University, Tokyo, Japan.
{\footnotesize \href{mailto:kentanak55@gmail.com}{kentanak55@gmail.com} (corresponding author). 
}
}
}

\usepackage[nottoc,notlot,notlof]{tocbibind}
\usepackage{fancyhdr}
\usepackage{extramarks}
 \date{\today}

\begin{document}

\maketitle

\begingroup
\renewcommand{\thefootnote}{\ifcase\value{footnote}\or*\or**\fi}
\footnotetext[0]{\textbf{MSC 2020:} 35B65; 35R09; 47G20. \textbf{Keywords:} the nonlocal Trudinger equation; Harnack's inequality; Tail estimates}

\endgroup

\begin{abstract}
We carry on a study of the point-wise regularity theory for the nonlocal Trudinger equation, with measurable and bounded singular kernel. We establish refined quantitative upper bounds, weak and strong parabolic Harnack inequalities, both under optimal tail conditions. Our analysis relies on the adaptation of a refined De Giorgi-Moser machinery based on the parabolic approach {\it à la Di Benedetto}, specifically designed to overcome the technical intricacies of the nonlocal, doubly nonlinear framework.
\end{abstract}

\setcounter{tocdepth}{2}
\tableofcontents

\section{Introduction}

\subsection{Motivation and overview} We address the regularity theory for nonlocal Trudinger's operators in bounded domains, namely 
\begin{equation}
\partial_t \Big(|u|^{p-2}u\Big)+\cL u=0 \quad \mbox{in} \quad \Omega_T:=\Omega \times (0,T) \subset \R^{d+1},
\label{e.NT}
\end{equation}
where the nonlocal integro-differential operator $\cL$ is formally defined as
\begin{align*}
\cL u(x,t)&:=\mathrm{p.v.}\int_{\R^d}2\left|u(x,t)-u(y,t)\right|^{p-2}\left(u(x,t)-u(y,t)\right)\sfk (x,y,t)\d{y},
\end{align*}
and the kernel $\sfk: \R^d \times \R^d \times (0,T) \to [0,\infty)$ is measurable and satisfies the condition  
\begin{equation}\label{e.kernel}
\frac{\Lambda^{-1}}{|x-y|^{d+sp}} \leq \sfk (x,y,t)=\sfk(y,x,t) \leq \frac{\Lambda}{|x-y|^{d+sp}}\quad \mbox{a.e.} \quad  x,y \in \R^d
\end{equation}
for every $t \in (0,T)$ and for some constant $\Lambda \in [1,\infty)$. 

\subsubsection{Brief Summary}
Nonlocal problems, arise naturally in various contexts. These include jump processes (Lévy flights), nonlocal minimal surfaces, the Boltzmann equation, nonlocal kinetic equations, and the nonlocal analog of Hilbert's XIX problem. See \cite{X-RO} and its references for a brief summary. Harnack inequalities represent one of the essential cornerstones of this discipline. Indeed, the inherent roughness of the underlying data generally precludes higher-order regularity, hence the quantitative point-wise bound inferred by Harnack's inequality is, in a sense, the best regularity one can hope for.  Specifically, the aforementioned research focuses on nonlocal operators with bounded measurable coefficients of the form
\[
\cL u(x)=\mathrm{p.v.}\int_{\R^d}2|u(x)-u(y)|^{p-2}(u(x)-u(y))\sfk(x,y)\d{y}, 
\]
where $\sfk:\R^d \times \R^d \to [0,\infty)$ satisfies, for a.e. $x, y \in \R^d$, that
\begin{equation*}
\frac{\Lambda^{-1}}{|x-y|^{d+sp}} \leq \sfk(x,y)=\sfk(y,x) \leq \frac{\Lambda}{|x-y|^{d+sp}}
\end{equation*}
for some parameter $\Lambda \in [1,\infty)$, with $p >1$ and $s \in (0,1)$. Over the past decade, numerous authors have independently addressed the regularity of solutions of such equations. In fact, the elliptic case is hitherto well-understood; to give a representative example, ~\cite{Kas09} for (nonlocal) Moser's iteration method, whereas~\cite{CV10, Min11, DKP14, DKP16, PP22} for (nonlocal) De Giorgi's iteration methods. In stark contrast to the elliptic case, the parabolic case has still many open problems due to the long-range effects caused by time, despite the fact that the literature has grown considerably; see, for instance,~\cite{CCV11, FK13, Kim19, Str19, DKSZ20, APT22, BGK22, KW22b, Nak22a, Nak22b, CKW23, KW23, GK23, BK24, Pra24, Lia24a, Lia24b, SZ25, LW25, DeF26} and references therein. Recently, for the case $p=2$, Kassmann and Weidner provided a complete answer for the linear case in~\cite{KW24}, see also \cite{Lia25} for a different approach that can be applied to equations with drift terms.

\subsection{Statements of the main result}
We begin by introducing the minimal notation needed for the statements below. Throughout the present text, we work in dimension $d \geq 2$. For a parameter $a>0$, the symbol \[I_R^a (t_0):=\left(t_0-a R^{sp}, t_0+aR^{sp}\right)\]denotes the time-interval around $t_0$, of width $2a R^{sp}$. When $a=1$, we simply write it by $I_R(t_0)$. The notation $B_R(x_0)$, as usual, denotes the open Euclidean ball centered at $x_0$, of radius $R$.  When $x_0=0$ or it is clear from the context, we simply write $B_R$. The Lebesgue measure of a measurable set $U \subset \R^d$ by $|U|$ and the integral average of a function $g \in L^1U$ is denoted as usual with
\[
(g)_U:=\dashint_{U}g(x)\d{x}:=\frac{1}{|U|}\int_{U}g(x)\d{x}.
\] For fixed $p \in (1,\infty)$  and $s \in (0,1)$, we define the nowadays classic  \emph{nonlocal tail}, that will be used to describe the long-range interactions
\[
\tail\big(u\,; B_R(x_0)\big):=\left(R^{sp}\int_{\R^d \setminus B_R(x_0)}\frac{|u(x)|^{p-1}}{|x-x_0|^{d+sp}}\d{x} \right)^{\nicefrac{1}{(p-1)}}
\]
for $x_0 \in \R^d$ and $R>0$. First approaches to the H\"older regularity to variational nonlocal operators through a specific manipulation of $\tail$ can be found in~\cite{Kas09}, and then in \cite{DKP14} for a nonlinear version. Our first result is a $L^\infty$-$L^\nu$ quantitative local bound with optimal tail.
\begin{theorem}[Quantitative Local Bounds with Optimal Tail]\label{t.boundedness-NT}
For $p \in (1,\infty)$ and $s \in (0,1)$ let $u$ be a weak sub-solution (resp. super-solution) to~\eqref{e.NT}-\eqref{e.kernel} in the sense of Definition~\ref{def-of-NT}. Assume that $Q_{\rho, \theta}(z_0)  \subset \Omega_T$ for $z_0=(z_0,t_0) \in \Omega_T$. Let $\sigma \in (0,1)$ and $\nu \in (0,p]$. Then there exist positive constants $C(\data), C_{\nu}(\data, \nu)$, such that the following estimate holds true:
\begin{align*}
\sup_{\sigma Q_{\rho, \theta}(z_0)}u_\pm &\leq \frac{C}{(1-\sigma)^{(d+p)q_\ast+\frac{d+sp}{p-1}}}\left[ \frac{\theta}{(\sigma \rho)^{sp}}\dashint_{t_0-\theta}^{t_0}  \tail \big(u_\pm(t)\,; B_{\sigma \rho}(x_0)\big)^{p-1}\d{t}\right]^{\nicefrac{1}{(p-1)}}\\
&\quad \quad \quad \quad \quad \quad \quad \,+C_\nu \left[\frac{\boldsymbol{\cA}}{(1-\sigma)^{(d+p)q_\ast p}}\biint_{Q_{\rho,\theta}}u_\pm^\nu\dxt\right]^{\nicefrac{1}{\nu}},
\end{align*}
where we denoted
\[
\boldsymbol{\cA}:=\left(\dfrac{\rho^{sp}}{\theta}+\dfrac{1}{\sigma^{d+sp}}\right)^{pq_\ast}\left(\dfrac{\theta}{\rho^{sp}}\right) \quad \mbox{and} \quad
q_\ast:=\begin{cases}
\frac{1}{p}\left(\frac{2}{p \wedge 2}\frac{d}{sp}+1\right), \quad &\mbox{if}\quad sp<d,\\[2mm]
\frac{1}{p}\left(\frac{4}{p \wedge 2}+1\right), \quad &\textrm{if}\quad sp\geq d.
\end{cases}
\]
\end{theorem}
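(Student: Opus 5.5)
We argue for sub-solutions and $u_+$; the super-solution case is symmetric after replacing $u$ by $-u$. The route is a De Giorgi iteration on shrinking cylinders, applied to truncations $(u-k)_+$ with increasing levels $k_j$. It yields first an $L^\infty$--$L^p$ bound with an arbitrary interpolation parameter. A standard covering/interpolation (Giaquinta--Giusti) lemma then lowers the integrability exponent to any $\nu\in(0,p]$.

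\textbf{Step 1: energy estimate.} We first prove a Caccioppoli-type inequality for the doubly nonlinear structure. We test \eqref{e.NT} with $(u-k)_+\zeta^p$, where $\zeta$ is a space--time cutoff supported in $Q_{r,\tau}$ and equal to $1$ on $Q_{\tilde r,\tilde\tau}$. The time term is handled through the function
\[
\mathfrak g_+(u,k)=(p-1)\int_k^u |s|^{p-2}(s-k)_+\,\mathrm ds,
\]
which is comparable to $(|u|+|k|)^{p-2}(u-k)_+^2$. This comparability is where the factor $2/(p\wedge 2)$ in $q_\ast$ comes from: for $p<2$ one only controls an $L^2$-type quantity weighted by $(|u|+|k|)^{p-2}$. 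The result bounds
\[
\sup_t\int \mathfrak g_+(u,k)\zeta^p \;+\; \iint\!\!\iint \frac{|w\zeta(x)-w\zeta(y)|^p}{|x-y|^{d+sp}}, \qquad w=(u-k)_+,
\]
by three terms: $\iint w^p|\nabla\zeta|^p$-type integrals, $\iint \mathfrak g_+\,|\partial_t\zeta^p|$, and the tail term
\[
\iint w\zeta^p\cdot\frac{(1-\sigma)^{-(d+sp)}\,\tail(w;B_{r})^{p-1}}{r^{sp}}.
\]

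\textbf{Step 2: iteration.} We take radii $\rho_j=\sigma\rho+2^{-j}(1-\sigma)\rho$ with corresponding times $\theta_j$, and levels $k_j=k(1-2^{-j})$. The parabolic fractional Sobolev embedding, with exponent $1+sp/d$ (or a fixed substitute when $sp\ge d$), combined with the energy estimate gives
\[
Y_{j+1}\le C\,b^j\,\boldsymbol{\cA}'\,k^{-\text{power}}\,Y_j^{1+\kappa}.
\]
The gain $\kappa$ is determined by $q_\ast$, and $\boldsymbol{\cA}'$ absorbs the ratio $\rho^{sp}/\theta$ and the factor $\sigma^{-(d+sp)}$ arising from tail comparisons. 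The tail term is made harmless by requiring $k$ to dominate the time-averaged tail of $u_+$ over $B_{\sigma\rho}$. This averaged form, rather than a $\sup_t$, is essential for the optimal tail, and it is obtained by keeping the tail inside the time integral and using Hölder in time only at the end. The standard fast geometric convergence lemma then yields
\[
\sup_{\sigma Q}u_+\le \delta\sup_{Q}u_+ + C\,\text{Tail-term} + C_\delta\Big[\boldsymbol{\cA}(1-\sigma)^{-(d+p)q_\ast p}\biint u_+^p\Big]^{1/p}.
\]

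\textbf{Step 3: from $p$ to $\nu$.} To reach $\nu<p$ we estimate $\biint u_+^p\le \sup u_+^{p-\nu}\biint u_+^\nu$ and apply Young's inequality with a small parameter. This leaves $\tfrac12\sup_{Q_{\rho'}}u_+$ on the right-hand side. An iteration lemma over intermediate radii $\sigma\rho<\rho'<\rho$ absorbs it, with the powers of $(1-\sigma)$ recorded in the statement.

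\textbf{Main obstacle.} The delicate step is making the tail enter only through its time average over $(t_0-\theta,t_0)$ while the iteration proceeds. In the energy estimate the tail multiplies $\int w\zeta^p$, which must be estimated pointwise in time. We would try Hölder in time with $\|w\|_{L^\infty_tL^1_x}$, which is controlled by the $\sup_t$ energy part, paired against $\int\tail^{p-1}\,\mathrm dt$. We would also need to check that this pairing is compatible with the $\mathfrak g_+$ degeneracy when $p<2$. Tracking the dependence on $\theta$ and $\rho$ to obtain exactly the factor $\boldsymbol{\cA}$ is routine bookkeeping.
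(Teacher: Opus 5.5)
\textbf{There is a genuine gap, in the step you flag as the main obstacle.} Your outer architecture matches the paper's: De Giorgi on shrinking cylinders with gain $\kappa=1+\tfrac{\kappa_\ast-1}{\kappa_\ast}$, the H\"older split with exponents $(\tfrac2p,\tfrac{2}{2-p})$ that produces the factor $\tfrac{2}{p\wedge 2}$ in $q_\ast$, and Young's inequality plus an iteration lemma to pass from $p$ to $\nu$. The gap is that the time-averaged tail does not come out of your scheme, and that is exactly the content of the theorem.

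\textbf{Why constant levels fail.} With constant levels $k_j$, the far part of the tail enters as $\int T(t)\big(\int\widetilde w_j(x,t)\d{x}\big)\d{t}$. Here $T(t):=\int_{\R^d\setminus B_R}u_+^{p-1}(y,t)|y|^{-d-sp}\d{y}$ is known only to lie in $L^1$ in time. Pair it with $\sup_t\int\widetilde w_j(x,t)\d{x}$ and use $\widetilde w_j\le w_j^p/(\widetilde k_j-k_j)^{p-1}$. You get the bound $C2^{j(p-1)}k^{1-p}\|T\|_{L^1}\sup_t\int w_j^p(x,t)\d{x}$, which is the previous-level energy $E_{j-1}$ to the \emph{first} power. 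Absorbing part of it into the left side by Young's inequality does not help: the remainder $\sup_t|\{u(\cdot,t)>\widetilde k_j\}\cap B_j|\,\|T\|_{L^1}^{p/(p-1)}$ is again of first order in $E_{j-1}$.

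The energy controls only $L^\infty_tL^p_x$ in time, with no Sobolev improvement there. So the recursion picks up a linear term $C\sfb^j\varepsilon_kE_{j-1}$ with $\varepsilon_k\approx\|T\|_{L^1}/k^{p-1}$. Because of the factor $\sfb^j$, Lemma~\ref{t.FGC} cannot be applied, however small $\varepsilon_k$ is. This is the endpoint $\eps=0$ of the coupled recursion in Lemma~\ref{t.DGtype-NT}, where $\lambda=\frac{\eps sp}{d(p-1+\eps)}$ degenerates to $0$. Constant levels handle a tail in $L^{p-1+\eps}_t$, not the $L^{p-1}_t$ tail in the statement.

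\textbf{The missing idea is a \emph{time-dependent} truncation level.}
\begin{itemize}
\item The paper applies Proposition~\ref{t.caccioppoli1} with $k(t)=\big[(\ell(t)+c_p\mathsf M_0)^{p-1}+\widetilde k_i^{p-1}\big]^{1/(p-1)}$. Here $\ell(t)^{p-1}=\boldsymbol\Gamma\int_{-\theta}^t\int_{\R^d\setminus B_R}u_+^{p-1}|y|^{-d-sp}\d{y}\d{\tau}+\dots$ is the cumulative far tail.
\item Since $k'\geq 0$, the extra term $-C\iint k'(t)\zeta^p(|u|+|k(t)|)^{p-2}\widetilde w_i$ is bounded above by $-c\iint\zeta^p\tfrac{\d{}}{\d{t}}\ell(t)^{p-1}\,\widetilde w_i$. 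This cancels the far-tail contribution pointwise in $t$. The time integral of the tail then enters only through $\ell(0)$ in the final choice of $k$.
\item The near tail over $B_R\setminus B_i$ is bounded through $\boldsymbol{\mathfrak S}$, i.e.\ by $\sup_t\dashint_{B_R}u_+^{p-1}$. This is where the $\delta\sup u_+$ term in your Step 2 really comes from; it is absorbed by taking $\delta$ small and iterating over radii.
\end{itemize}

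\textbf{The a priori bound needed for $1<p<2$.} Here the cancellation also needs $(|u|+k(t))^{p-2}\gtrsim k(t)^{p-2}$ on $\{u>k(t)\}$, hence an a priori upper bound on $u$. This is why the paper proceeds as follows:
\begin{itemize}
\item It first proves qualitative boundedness (Proposition~\ref{t.qualitative-bnd-NT}, via Moser iteration under $u\in L^{p-1+\eps}_{\mathrm{loc}}$).
\item It shifts all levels by a multiple of $\mathsf M_0=\sup_{Q_0}u_+$.
\item It obtains $\sup_{\sigma Q}u_+\leq\tfrac34\mathsf M_0+C(1-\sigma)^{-(d+p)q_\ast}\sfQ$ and absorbs $\tfrac34\mathsf M_0$ by a standard iteration lemma over radii.
\end{itemize}
Neither ingredient is in your plan. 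As written, it proves at best the estimate with the tail in $L^{p-1+\eps}_t$ or $L^\infty_t$, not the optimal one.
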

\noindent Our approach is based on~\cite[Section 3]{CN26}, comprehending the idea of the linear case as in~\cite{KW24} and ~\cite[Proposition 5.1]{Lia25}. See also the work of De Filippis~\cite{DeF26} for the fractional porous medium-type equation. Our second and main result (Theorem~\ref{t.fullHarnack-NT}) is the nonlocal strong Harnack inequality for local weak solutions to \eqref{e.NT}-\eqref{e.kernel}: a quantitative bound of the local supremum and the long-range effects of the positive part of the solution are quantitatively controlled by its infimum and the time-averaged tail of $u_{-}$. It can be compared to the nonlocal Harnack estimate for the nonlocal heat (or parabolic) equation; see~\cite{Kim19, Str19, KW24} for details. While classical Harnack inequalities focus purely on internal regularity, this theorem captures the essential interplay between local diffusion and long-range interactions within a single unified estimate.

\begin{theorem}[Strong Harnack inequality]\label{t.fullHarnack-NT} Let $u$ be a weak solution to~\eqref{e.NT}-\eqref{e.kernel} in the sense of Definition~\ref{def-of-NT} such that
\[
u \geq 0 \quad \mbox{in} \quad\cQ_{R_0}:=B_{R_0}(x_0) \times I_{R_0}(t_0)
\]
with $R_0:=4\cdot 6^{\nicefrac{1}{sp}}\rho$. There exists $C_\mathsf{H}>0$ depending only on the $\data$ such that, if condition \[\tail\big(u(\cdot); B_{R_0}(x_0) \big) \in L^{p-1+\eps}_{\mathrm{loc}}([0,T])\,\] is at stake for some $\varepsilon>0$, then, 
\begin{align*}
&\sup_{B_{\rho}(x_0) \times \left(t_0-\frac{1}{2}(2\rho)^{sp}, t_0\right]}u +\left(\dashint_{t_0-\frac{1}{2}(2\rho)^{sp}}^{t_0}\tail \big(u_+(t) ; B_\rho(x_0) \big)^{p-1}\d{t}\right)^{\frac{1}{p-1}} \notag\\[2pt]
& \quad \leq C_\mathsf{H}\left[ \inf_{B_\rho (x_0)\times \left(t_0+\frac{3}{4}(4\rho)^{sp}, t_0+(4\rho)^{sp} \right]}u +\left(\dashint_{I_{R_0}(t_0)} \tail \big(u_-(t)\,;B_{4\rho}(x_0)\big)^{p-1+\eps}\d{t}\right)^{\frac{1}{p-1+\eps}}\right],
\end{align*}
provided that the following inclusion is satisfied
\[
B_{2\rho}(x_0) \times \left(t_0-(2\rho)^{sp}, t_0+6(4\rho)^{sp}\right] \subset \cQ_{R_0} \Subset \Omega_T.
\]
\end{theorem}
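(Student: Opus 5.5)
The proof will follow the scheme of Kassmann--Weidner~\cite{KW24} and~\cite{Lia25}, adapted to the doubly nonlinear setting as in~\cite{CN26}. It rests on three ingredients: the local upper bound of Theorem~\ref{t.boundedness-NT}, a weak Harnack inequality for super-solutions, and a tail estimate for $u_+$.

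\textbf{Step 1: local upper bound for $u_+$.} Apply Theorem~\ref{t.boundedness-NT} to $u$, viewed as a sub-solution, on the cylinder $B_{2\rho}(x_0)\times(t_0-(2\rho)^{sp},t_0]$, with $\sigma=\tfrac12$ and a small exponent $\nu\in(0,p-1)$. This bounds $\sup_{B_\rho\times(t_0-\frac12(2\rho)^{sp},t_0]}u$ by two quantities:
\begin{itemize}
\item the time-averaged $\tail(u_+;B_\rho)^{p-1}$, multiplied by a constant that can be made small through the freedom in the choice of the parameters (or an iteration/interpolation in $\sigma$);
\item an $L^\nu$-average of $u$ on the cylinder.
\end{itemize}
In Step 3 the tail term is reabsorbed, since it also sits on the left-hand side of the claimed estimate.

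\textbf{Step 2: weak Harnack inequality for super-solutions.} Show that
\[
\Big(\biint_{B_{2\rho}\times(t_0-(2\rho)^{sp},t_0)}u^\nu\Big)^{1/\nu}\le C\inf_{B_\rho\times(t_0+\frac34(4\rho)^{sp},\,t_0+(4\rho)^{sp}]}u + C\,\mathcal T_-,
\]
where $\mathcal T_-$ denotes the $L^{p-1+\eps}$ time-average of $\tail(u_-;B_{4\rho})$ over $I_{R_0}(t_0)$. The proof proceeds as follows.
\begin{itemize}
\item Test the equation with negative powers of $u+\mathcal T_-$ in the variable $v=u^{p-1}$; the time derivative is then exactly of Moser type for the Trudinger equation, because $(u^{p-1})_t$ paired with $u^{-q}$ is homogeneous.
\item Derive a logarithmic estimate and reverse-H\"older inequalities for negative exponents.
\item Combine these through the Bombieri--Giusti crossover lemma.
\end{itemize}
The $u_-$ tail enters the energy estimates as a forcing term $\int \tail(u_-)^{p-1}\,w^{1-p}\,\mathrm dt$. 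Because this term is only integrable in time, it is controlled by H\"older's inequality in time, and the exponent $p-1+\eps$ is needed so that the forcing appears as a controlled $L^{1+\eps/(p-1)}$ source in the iteration. The waiting time between the two cylinders is built into the geometry through the factor $6^{1/sp}$ in $R_0$.

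\textbf{Step 3: tail estimate for $u_+$ and conclusion.} Test the equation with $\phi^p$, where $\phi$ is a cutoff supported in $B_{\rho/2}$, and integrate in time over $(t_0-\tfrac12(2\rho)^{sp},t_0)$. The nonlocal term with $y$ outside $B_\rho$ produces $\tail(u_+;B_\rho)^{p-1}$ with a good sign. The remaining terms are bounded by:
\begin{itemize}
\item $\sup u^{p-1}$ on the cylinder, coming from the time boundary terms and the local part;
\item $\tail(u_-)^{p-1}$.
\end{itemize}
Hence the averaged tail of $u_+$ is at most $C\sup u+C\mathcal T_-$. Inserting this into Step 1 with the small constant, absorbing, and then using Step 2 gives the theorem.

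The main obstacle is Step 2. The doubly nonlinear structure makes the expansion of positivity and the crossover lemma delicate, since neither intrinsic scaling nor a change to $v=u^{p-1}$ linearises the nonlocal term. I would first try the DiBenedetto-style route: obtain measure-theoretic positivity propagation forward in time from a De Giorgi lemma for super-solutions, using the homogeneity $u\mapsto\lambda u$. Then I would conclude with a covering argument rather than Moser's negative-power iteration.
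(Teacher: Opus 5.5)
There is a genuine gap in how you close the argument: Steps~1 and~3 are circular. Step~1 bounds $M:=\sup_{B_\rho\times(t_0-\frac12(2\rho)^{sp},t_0]}u$ by the averaged $\tail(u_+;B_\rho)$ plus an $L^\nu$-average $L$ over $B_{2\rho}\times(t_0-(2\rho)^{sp},t_0]$. Step~3 bounds that tail by $M+\mathcal{T}_-$. Absorbing one into the other needs the product of the two constants to be below $1$, and nothing provides this. In Theorem~\ref{t.boundedness-NT} the tail enters as $\big((\sigma\rho)^{-sp}\int_{t_0-\theta}^{t_0}\tail(u_+(t);B_{\sigma\rho})^{p-1}\d{t}\big)^{1/(p-1)}$. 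This quantity is scale invariant. It does not become small by shortening time intervals, changing $\sigma$, or covering by smaller cylinders, because a tail that is only $L^1$ in time can be concentrated in time. (Step~1 also uses the tail over a longer time interval than the one Step~3 controls.)

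In fact no absorption scheme can work, since it would yield $M\le C L+C\mathcal{T}_-$, which is false. Take $u\equiv0$ up to time $t_0-\epsilon$. During $(t_0-\epsilon,t_0)$, switch on a nonnegative exterior datum far from $B_{2\rho}$ with $\rho^{-sp}\int\tail(u_+(t);B_\rho)^{p-1}\d{t}\approx A^{p-1}$. Then $u\ge0$ globally and $\mathcal{T}_-=0$. Both $M$ and the averaged $u_+$-tail are of order $A$, while $L\to0$ as $\epsilon\to0$. So a weak Harnack inequality that controls only an $L^\nu$-average on $B_{2\rho}\times(t_0-(2\rho)^{sp},t_0]$, as in your Step~2, cannot close the proof however it is proved. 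The only quantity on the right-hand side that sees such a burst is the later infimum.

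The missing idea is that the $u_+$-tail must be controlled by the later infimum, not by the supremum. The weak Harnack inequality has to carry $\big(\dashint_{t_0-(2\rho)^{sp}}^{t_0}\tail(u_+(t);B_{2\rho})^{p-1}\d{t}\big)^{1/(p-1)}$ on its left-hand side, as in Theorem~\ref{t.weakHarnack2-NT}.

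The paper obtains this from the good nonlocal term $\iint w_-(x,t)\int_{\R^d}w_+^{p-1}(y,t)|x-y|^{-d-sp}\d{y}\dxt$ in \eqref{e.caccioppoli2}$_-$. It uses this term at a level $k$ that is a small fraction of the averaged $u_+$-tail, rather than at $k=2M$ as in your Step~3. This gives Lemma~\ref{t.newmt-NT}: some time slice satisfies $|\{u\le k\}\cap B_\rho|\le Ck^{p-1}\big(\dashint\tail(u_+(t);B_\rho)^{p-1}\d{t}\big)^{-1}|B_\rho|$. Lemma~\ref{t.weakDGtype-NT} then propagates positivity forward to the infimum cylinder.

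With this version of Step~2, no smallness is needed. Theorem~\ref{t.boundedness-NT} with $\sigma=\frac12$ and $\nu=p-1$, together with $\tail(u_+;B_\rho)^{p-1}\le2^{-sp}\tail(u_+;B_{2\rho})^{p-1}+C\dashint_{B_{2\rho}}u_+^{p-1}\d{x}$, bounds $M$ by quantities that the weak Harnack inequality controls by the infimum plus the $u_-$-tail. Your Step~3 (the paper's \eqref{e.tail.result}) then only serves to convert the tail on the left-hand side into $M$.
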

\noindent If one assumes further $u$ globally nonnegative, the Harnack estimate above simplifies to
\[
\sup_{B_{\rho}(x_0) \times \left(t_0-\frac{1}{2}(2\rho)^{sp},t_0\right]}u
\leq C_\mathsf{H} \bigg( \inf_{B_\rho (x_0)\times \left(t_0+\frac{3}{4}(4\rho)^{sp}, t_0+(4\rho)^{sp} \right]}u\, \bigg)\,.
\] The time-gap phenomenon is unavoidable. Nevertheless, for {\it global} weak solutions the situation is completely different, see~\cite{LW25}
 .  

\subsection{Organization of the paper}
This paper is structured as follows: In Section~\ref{Sect.2} we collect several notation, definitions, and preliminary results.  Section~\ref{Sect.3} is devoted to the proof of Theorem~\ref{t.boundedness-NT}. To do so, we first study qualitative local boundedness (Proposition~\ref{t.qualitative-bnd-NT}). Section~\ref{Sect.4} is dedicated to measure theoretical properties (Critical Mass/De Giorgi type Lemmata, Expansion of Positivity) of the local solutions. At the end of this section, we give the proof of Theorem~\ref{t.fullHarnack-NT}.

\section{Notation and Preliminary Results}\label{Sect.2}
In this section we collect notation, definitions and preliminary results. Most of them are basically well-known in slightly different settings.

\subsection{Notation}
Throughout this paper, the symbols $C$ and $c$ denote positive constants which may
vary from line to line. For the sake of ease, we denote
\[
\data:=(d,s,p,\Lambda),
\]
where $\Lambda \in [1,\infty)$ is the structural constant appearing in~\eqref{e.kernel}. This allows us to denote a constant $C$ that depends on $(d,s,p,\Lambda)$ by simply $C(\data)$ instead of $C(d,s,p,\Lambda)$.
\smallskip

We denote $s \wedge k := \min\{s,k\}$ and $s \vee k:=\max\{s, k\}$. Moreover, $(s-k)_\pm:=\pm(s-k) \vee 0$. The symbol $\1_{A}$ denotes the indicator function on a set $A$. To lighten the notation, we denote $\sup_A \equiv \esssup_A$  and  $\inf_A \equiv \essinf_A$, respectively. 
\smallskip
  
Let $\Omega \subseteq \R^d$ be a bounded domain. For $T\in (0,\infty)$ we denote a space-time cylinder by $\Omega_T:=\Omega \times (0,T)$. As customary, $B_\rho(x_0)$ denotes the open ball with radius $\rho$ and center $x_0 \in \R^d$. For a fixed vertex $z_0=(x_0,t_0) \in \R^d \times \R$, we define the (one-sided) parabolic cylinder $Q_{\rho, \theta}(z_0)$ by
\[
Q_{\rho,\theta}(z_0):=B_\rho(x_0) \times (t_0-\theta, t_0]; 
\]
in particular, when $\theta=\rho^{sp}$ we will shorten $Q_{\rho}(z_0)\equiv Q_{\rho,\rho^{sp}}(z_0)$. When it is clear from the context which center is meant, we will omit it from the notation, that is, $B_\rho \equiv B_\rho(x_0)$ and $Q_{\rho,\theta}\equiv Q_{\rho,\theta}(z_0)$, etc. Throughout the paper, we fix the two-sided ambient cylinder
\[
\cQ_R(z_0):=B_{R}(x_0) \times (t_0-R^{sp},t_0+R^{sp}]=:B_R(x_0) \times I_R(t_0).
\]
\subsection{Function spaces}\label{Function-spaces}
A measurable function $u:\Omega \to \R$ is in the fractional Sobolev space $W^{s,p} (\Omega)$ if $u \in L^p(\Omega)$ and it has finite fractional norm
\[
\|u\|_{W^{s,p}(\Omega)}:=\|u\|_{L^p(\Omega)}+[u]_{W^{s,p}(\Omega)} <+\infty,
\]
where 
\[
[u]_{W^{s,p}(\Omega)}:=\left(\iint_{\Omega \times \Omega} \frac{|u(x)-u(y)|^p}{|x-y|^{d+sp}}\dxy \right)^{\nicefrac{1}{p}}
\]
is the \emph{Gagliardo} semi-norm. The fractional Sobolev space $W^{s,p}(\R^d)$ is defined analogously, and the fractional Sobolev space with zero boundary values is defined as
\[
W_{0}^{s,p}(\Omega):=\Big\{f \in W^{s,p}(\R^d): f=0\,\,\,\textrm{on}\,\,\,\R^d \setminus \Omega \Big\}.
\] Useful embeddings and properties of fractional Sobolev spaces can be found, for instance, in the comprehensive monographs~\cite{DNPV12, KP18, FeRo24, ADV25}. For what concerns the parabolic setting, we refer to the Bochner integral:  given $p \in [1,\infty)$,  $I \subset \R$ and an arbitrary Banach space $X$, we denote by $L^p(I ; X)$ the space of Lebesgue-measurable mappings $u : I \to X$ such that
\[
\|u\|_{L^p(I ; X)}:=\left(\int_I \|u(\cdot, t)\|_X^p\d{t} \right)^{\nicefrac{1}{p}}<\infty.
\]
Finally, we define $C(I; X)$ as the space of continuous maps $t \mapsto \|u(\cdot,t)\|_X$.

\subsection{Tools of the Trade}
In this subsection, we collect the technical tools that are used in the proofs. 

\subsubsection{Algebraic inequalities}
We begin with the useful algebraic inequalities. The first one is the following with constants being precise values, retrieved from~\cite[Lemma 2.2]{BDLMBS25} by careful inspections of the proof of \cite[Lemma 2.2]{AF89} for $\alpha \in (0,1)$ and \cite[inequality (2.4)]{GM86} for $\alpha \in (1,\infty)$.
\begin{lemma}\label{t.alg-est-1}
Let $\alpha>0$. Then
\[
C_1 \left(|a|+|b|\right)^{\alpha-1}|a-b| \leq \left||b|^{\alpha-1}b-|a|^{\alpha-1}a\right| \leq C_2 \left(|a|+|b|\right)^{\alpha-1}|a-b|
\]
holds whenever $a, b \in \R$, where 
\begin{equation*}
    C_1=
    \left\{
    \begin{array}{ll}
        \alpha, & \mbox{if $\alpha\in(0,1]$,} \\[2mm]
        2^{1-\alpha}, & \mbox{if $\alpha\in[1,\infty)$,}
    \end{array}
    \right.
    \qquad
    C_2=
    \left\{
    \begin{array}{ll}
         2^{1-\alpha}, & \mbox{if $\alpha\in(0,1]$,} \\[2mm]
         \alpha, & \mbox{if $\alpha\in[1,\infty)$.}
    \end{array}
    \right.
\end{equation*}
\end{lemma}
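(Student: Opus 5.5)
We prove Lemma~\ref{t.alg-est-1} by reducing to a one-variable inequality through homogeneity and symmetry. Set $\phi(\tau):=|\tau|^{\alpha-1}\tau$, an odd, strictly increasing function with $\phi'(\tau)=\alpha|\tau|^{\alpha-1}$. The case $a=b$ is trivial. Both sides are symmetric in $(a,b)$, invariant under $(a,b)\mapsto(-a,-b)$, and homogeneous of degree $\alpha$ under $(a,b)\mapsto(\lambda a,\lambda b)$ with $\lambda>0$. We may therefore normalize so that $|b|\geq|a|$ and $b=1$, which leaves $a=\tau\in[-1,1)$. In this normalization the claim reads
\[
C_1(1+|\tau|)^{\alpha-1}(1-\tau)\leq 1-\phi(\tau)\leq C_2(1+|\tau|)^{\alpha-1}(1-\tau),
\]
so everything reduces to bounding the ratio
\[
F(\tau):=\frac{1-\phi(\tau)}{(1+|\tau|)^{\alpha-1}(1-\tau)}
\]
from above and below.

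\textbf{Case $\tau\in[0,1)$.} Here $F(\tau)=\frac{1-\tau^\alpha}{1-\tau}(1+\tau)^{1-\alpha}$. By the mean value theorem, $\frac{1-\tau^\alpha}{1-\tau}=\alpha\xi^{\alpha-1}$ for some $\xi\in(\tau,1)$.
\begin{itemize}
\item For $\alpha\geq1$, we have $\xi^{\alpha-1}\leq 1\leq(1+\tau)^{\alpha-1}$, which gives $F\leq\alpha$. For the lower bound, $\frac{1-\tau^\alpha}{1-\tau}\geq1$ (because $\tau^\alpha\leq\tau$) and $(1+\tau)^{1-\alpha}\geq 2^{1-\alpha}$, which gives $F\geq2^{1-\alpha}$.
\item For $\alpha\in(0,1]$, the inequalities reverse: $\frac{1-\tau^\alpha}{1-\tau}\leq1$ and $(1+\tau)^{1-\alpha}\leq 2^{1-\alpha}$, so $F\leq2^{1-\alpha}$. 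Also $\xi^{\alpha-1}\geq1$, so $\frac{1-\tau^\alpha}{1-\tau}\geq\alpha$, and $(1+\tau)^{1-\alpha}\geq1$, so $F\geq\alpha$.
\end{itemize}

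\textbf{Case $\tau=-r\in[-1,0)$.} Here
\[
F=\frac{1+r^\alpha}{1+r}\,(1+r)^{1-\alpha}=(1+r^\alpha)(1+r)^{-\alpha},
\qquad r\in(0,1].
\]
This is a power-mean comparison. For $\alpha\geq1$, the inequality $r^\alpha\leq r$ together with $(1+r)^{-\alpha}\geq 2^{-\alpha}$ gives $F\geq\frac{1+r^\alpha}{2^{\alpha}}\geq 2^{1-\alpha}\cdot\frac{1+r^\alpha}{2}$. That bound alone is not enough, so we argue through the mean instead. Convexity of $x\mapsto x^\alpha$ gives $\bigl(\frac{1+r}{2}\bigr)^\alpha\leq\frac{1+r^\alpha}{2}$, hence $F\geq 2^{1-\alpha}$. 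For the upper bound we use $(1+r)^\alpha\geq 1+r^\alpha$ (superadditivity of $x^\alpha$ when $\alpha\geq1$), so $F\leq1\leq\alpha$. For $\alpha\in(0,1]$, concavity gives $F\leq2^{1-\alpha}$ and subadditivity gives $F\geq1\geq\alpha$.

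\textbf{Conclusion.} The only delicate point is matching the exact constants $\alpha$ and $2^{1-\alpha}$ in the four subcases. In each subcase this reduces to an elementary convexity or mean value estimate, as above, and combining the two cases yields the stated bounds with the constants $C_1$ and $C_2$ of Lemma~\ref{t.alg-est-1}.
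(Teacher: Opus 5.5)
Your argument is correct. It necessarily takes a different route from the paper, because the paper gives no proof. It imports the lemma from BDLMBS25, where the explicit constants are extracted from the arguments of Acerbi--Fusco (for $\alpha<1$) and Giaquinta--Modica (for $\alpha>1$). Those arguments rest on the representation $|b|^{\alpha-1}b-|a|^{\alpha-1}a=\alpha(b-a)\int_0^1|a+\lambda(b-a)|^{\alpha-1}\,\mathrm{d}\lambda$, together with two-sided bounds of this integral by multiples of $(|a|+|b|)^{\alpha-1}$. The delicate point there is $\alpha<1$ when the segment crosses the origin and the integrand is singular.

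You instead use symmetry, oddness of $\phi$ and $\alpha$-homogeneity to reduce to $b=1$, $a=\tau\in[-1,1)$. You then handle $\tau\geq0$ by the mean value theorem, and $\tau=-r<0$, where the ratio is exactly $(1+r^\alpha)(1+r)^{-\alpha}$, by the power-mean inequality and sub/superadditivity of $x\mapsto x^\alpha$. All four subcases check out with precisely the constants $C_1,C_2$.

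Your route is more elementary and avoids the singular integral altogether. It also makes visible where each constant comes from; for instance, $2^{1-\alpha}$ is attained at $a=-b$ in both regimes. The integral-representation argument, by contrast, does not depend on the scalar structure and is the one that carries over to the vectorial map $\xi\mapsto|\xi|^{\alpha-1}\xi$ on $\R^n$.

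Two cosmetic points. The first sentence of your lower bound for $\alpha\geq1$, $\tau<0$ (the one invoking $r^\alpha\leq r$) is a dead end and can simply be dropped. The normalization $b=1$ uses $b\neq0$, which follows from $|b|\geq|a|$ and $a\neq b$; it is worth saying so.
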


A little bit of algebra shows the following simple inequality.
\begin{lemma}\label{t.alg-est-2}
Let $\alpha>0$. Then, for every $a,b \in \R_{\geq 0}$.
\[
(a+b)^\alpha \leq 2^{(\alpha-1)_+} \left(a^\alpha+b^\alpha\right).
\]
\end{lemma}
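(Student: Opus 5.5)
The statement is Lemma~\ref{t.alg-est-2}: for $\alpha>0$ and $a,b\geq 0$ we have $(a+b)^\alpha\leq 2^{(\alpha-1)_+}(a^\alpha+b^\alpha)$. I would split into the cases $\alpha\geq 1$ and $\alpha\in(0,1)$, since the exponent $(\alpha-1)_+$ changes its form exactly there. In both cases I first dispose of the trivial situation $a+b=0$, where both sides vanish.

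\emph{Case $\alpha\geq 1$.} Here the target constant is $2^{\alpha-1}$, and the natural tool is convexity of $t\mapsto t^\alpha$ on $[0,\infty)$. Jensen's inequality at the midpoint gives
\[
\left(\frac{a+b}{2}\right)^\alpha\leq \frac{a^\alpha+b^\alpha}{2}.
\]
Multiplying by $2^\alpha$ yields $(a+b)^\alpha\leq 2^{\alpha-1}(a^\alpha+b^\alpha)$, which is the claim.

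\emph{Case $\alpha\in(0,1)$.} Here the constant is $1$, so I need subadditivity $(a+b)^\alpha\leq a^\alpha+b^\alpha$. Assuming $a+b>0$, I normalize by setting $\lambda:=a/(a+b)\in[0,1]$. The inequality is then equivalent to $1\leq \lambda^\alpha+(1-\lambda)^\alpha$. This holds because $t^\alpha\geq t$ for $t\in[0,1]$ when $\alpha<1$, so
\[
\lambda^\alpha+(1-\lambda)^\alpha\geq \lambda+(1-\lambda)=1.
\]

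No step is a genuine obstacle. The only point needing care is the case distinction, namely matching the constant $2^{(\alpha-1)_+}$ to convexity when $\alpha\geq1$ and to concavity (subadditivity) when $\alpha<1$; at $\alpha=1$ both arguments give equality.
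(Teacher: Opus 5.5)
Your proof is correct: midpoint convexity of $t\mapsto t^\alpha$ gives the constant $2^{\alpha-1}$ when $\alpha\geq 1$, and normalizing by $\lambda=a/(a+b)$ and using $t^\alpha\geq t$ on $[0,1]$ gives subadditivity when $\alpha\in(0,1)$. The paper gives no argument beyond the remark that ``a little bit of algebra'' suffices, and your case split is the standard elementary verification that remark points to.
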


Moreover, in order to dominate the auxiliary functions $\mathfrak{g}_\pm$ of the energy estimates, defined in~\eqref{e.def-of-g}, we will need the following two-sided bounds; the proof is in exactly the same analogy to~\cite[Lemma 2.2]{BDL21}.

\begin{lemma}\label{t.g}
There exist positive constants $c_1$, $c_2$, depending only on $p$, such that
\[
c_1\Big(|w|+|k(t)|\Big)^{p-2}(w-k(t))_\pm^2
 \leq \mathfrak{g}_\pm(w,k(t)) \leq c_2 \Big(|w|+|k(t)|\Big)^{p-2}(w-k(t))_\pm^2.
 \]
\end{lemma}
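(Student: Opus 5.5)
The plan is to work with the explicit form of the auxiliary functions fixed in~\eqref{e.def-of-g}. I take this to be the standard one from the doubly nonlinear setting of~\cite{BDL21}, namely
\[
\mathfrak{g}_\pm(w,k)=\pm(p-1)\int_k^w |\tau|^{p-2}(\tau-k)_\pm\d\tau .
\]
Here $k=k(t)$ is frozen, since $t$ plays no role in a pointwise inequality. It suffices to treat $\mathfrak{g}_+$: the map $(w,k)\mapsto(-w,-k)$ exchanges $(w-k)_+$ and $(w-k)_-$, leaves $|w|+|k|$ unchanged and turns $\mathfrak{g}_-$ into $\mathfrak{g}_+$. If $w\leq k$, both sides of the claimed inequality vanish, so we assume $w>k$.

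\textbf{Step 1: rewrite $\mathfrak{g}_+$.} Integrating by parts (the integrand $|\tau|^{p-2}$ is integrable even for $p<2$, since $p-2>-1$), I write
\[
\mathfrak{g}_+(w,k)=(p-1)\int_k^w |\tau|^{p-2}(\tau-k)\d\tau=\int_k^w\Big(|w|^{p-2}w-|\tau|^{p-2}\tau\Big)\d\tau .
\]
This follows from $\frac{\d}{\d\tau}\big(|\tau|^{p-2}\tau\big)=(p-1)|\tau|^{p-2}$, or equivalently by Fubini. Now I apply Lemma~\ref{t.alg-est-1} with $\alpha=p-1>0$ and $a=\tau$, $b=w$. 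For every $\tau\in[k,w]$ this gives
\[
C_1\,(|w|+|\tau|)^{p-2}(w-\tau)\leq |w|^{p-2}w-|\tau|^{p-2}\tau\leq C_2\,(|w|+|\tau|)^{p-2}(w-\tau).
\]
No absolute value is needed in the middle term, since $\tau\mapsto|\tau|^{p-2}\tau$ is increasing.

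\textbf{Step 2: compare the weights.} This is the only point needing care, since the weight $(|w|+|\tau|)^{p-2}$ must be replaced by $(|w|+|k|)^{p-2}$ uniformly in $\tau$, and for $p<2$ the exponent is negative. The claim is that for $\tau\in[k,w]$,
\[
\tfrac12(|w|+|k|)\leq |w|+|\tau|\leq 2(|w|+|k|).
\]
The upper bound holds because $|\tau|\leq\max\{|w|,|k|\}$. For the lower bound, note that $|k|\leq|\tau|+(\tau-k)\leq|\tau|+(w-k)$ and $w-k\leq |w|+|k|$. This is not quite enough as it stands, so I use instead $\tau-k\leq w-k$ together with $|\tau-k|\leq|\tau|+|k|$. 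More cleanly, $|k|\leq |\tau|+|\tau-k|\leq|\tau|+|w-k|$. If this still fails to close, I argue by cases on the sign of $k$:
\begin{itemize}
\item if $k\geq0$, then $|\tau|=\tau\geq k=|k|$, so $|w|+|\tau|\geq|w|+|k|$;
\item if $k<0\leq w$, then $|k|=-k$ and, for $\tau\in[k,w]$, either $|\tau|\geq|k|/2$ or $\tau>k/2$; in the latter case $w\geq\tau>k/2$ gives $|w|\geq\ldots$, and I handle it by restricting to the subinterval where $|\tau|\geq|k|/2$, which has length at least $(w-k)/2$;
\item if $w<0$, then $|\tau|\geq|w|$ and the inequality reverses roles, handled by the symmetric argument.
\end{itemize}
Where the pointwise comparison fails, I pass to the lower bound only on a subinterval of length comparable to $w-k$ where $w-\tau\geq(w-k)/4$ and $|w|+|\tau|\simeq|w|+|k|$.

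\textbf{Step 3: integrate.} With the comparability from Step 2 in hand, I raise it to the power $p-2$, which costs a factor $2^{|p-2|}$ in either direction. Then I integrate using $\int_k^w(w-\tau)\d\tau=\tfrac12(w-k)^2$, or $\gtrsim(w-k)^2$ on the subinterval. This yields $c_1,c_2$ depending only on $p$ (through $C_1,C_2$ and $2^{|p-2|}$).

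As a fallback, if the case analysis in Step 2 becomes messy, I would use homogeneity: $\mathfrak{g}_+(\lambda w,\lambda k)=\lambda^p\mathfrak{g}_+(w,k)$, and the right-hand side is also $p$-homogeneous. One may therefore normalize $|w|+|k|=1$. Then the ratio $\mathfrak{g}_+(w,k)/(w-k)^2$ is continuous and positive on the compact set $\{|w|+|k|=1,\ w\geq k\}$. On the diagonal it extends by the limit $\tfrac{p-1}{2}|k|^{p-2}$, which is finite and positive since $|k|=\tfrac12$ there. It is therefore bounded above and below, which gives the constants by compactness.
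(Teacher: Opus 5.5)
Your main line (Steps 1--3) has a genuine gap. Step 1 is correct, but the lower half of the comparability claimed in Step 2 is false. That claim is $|w|+|\tau|\geq\frac12(|w|+|k|)$ for all $\tau\in[k,w]$.

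\begin{itemize}
\item A counterexample is $k=-1$, $w=0$, $\tau\to0^-$. More generally the claim fails near $\tau=w$ whenever $w\leq0$ and $|w|\ll|k|$.
\item So the case $w<0$ is not ``symmetric'' to $k\geq0$: there $|\tau|\geq|w|$ only gives $|w|+|\tau|\geq2|w|$.
\item Passing to a subinterval where the weights are comparable is legitimate only for \emph{lower} bounds on $\mathfrak g_+$, because the integrand is nonnegative. On $[k,\frac{w+k}{2}]$ one indeed has $|w|+|\tau|\geq\frac12(|w|+|k|)$ and $w-\tau\geq\frac{w-k}{2}$, which settles the lower bound for $p\geq2$.
\item For $1<p<2$, however, the exponent $p-2$ is negative. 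The \emph{upper} bound $\mathfrak g_+\leq c_2(|w|+|k|)^{p-2}(w-k)^2$ in Step 3 then needs $|w|+|\tau|\gtrsim|w|+|k|$ on \emph{all} of $[k,w]$. That is exactly the false inequality, and nothing in Steps 2--3 replaces it.
\end{itemize}

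Your homogeneity/compactness fallback, on the other hand, is a complete and correct proof, so present it as the proof.
\begin{itemize}
\item After normalizing $|w|+|k|=1$, the ratio $\mathfrak g_+(w,k)/(w-k)^2=(p-1)\int_0^1|k+s(w-k)|^{p-2}s\d s$ is continuous and positive on the compact arc $\{|w|+|k|=1,\ w\geq k\}$.
\item The only diagonal points on that arc are $w=k=\pm\frac12$, which stay away from the singularity at $0$.
\item Together with $\mathfrak g_-(w,k)=\mathfrak g_+(-w,-k)$, this gives $c_1,c_2$ depending only on $p$.
\end{itemize}

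The paper gives no argument of its own, only a pointer to \cite[Lemma 2.2]{BDL21}, an elementary computation based on the algebraic inequality of Lemma~\ref{t.alg-est-1}. Your Step 1 formula leads to such a computation at once if you apply Lemma~\ref{t.alg-est-1} only at endpoints. Use that $\tau\mapsto|w|^{p-2}w-|\tau|^{p-2}\tau$ is nonincreasing to get
\[
\mathfrak g_+(w,k)\leq(w-k)\big(|w|^{p-2}w-|k|^{p-2}k\big)\leq C_2\,(|w|+|k|)^{p-2}(w-k)^2,
\]
and, with $m:=\frac{w+k}{2}$,
\[
\mathfrak g_+(w,k)\geq\int_k^m\big(|w|^{p-2}w-|m|^{p-2}m\big)\d\tau\geq\tfrac{C_1}{4}\,(|w|+|m|)^{p-2}(w-k)^2.
\]
Combine this with $\frac12(|w|+|k|)\leq|w|+|m|\leq\frac32(|w|+|k|)$.

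This route avoids any pointwise comparison of weights inside the integral and yields explicit constants in the spirit of Lemma~\ref{t.alg-est-1}. The compactness route is quicker to state but gives non-quantitative $c_1,c_2$; that is still all the lemma asserts.
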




The next lemma that is needed in order to run the Moser's iteration machinery. The precise proof is in~\cite[Lemma A.6]{CN26}.
\begin{lemma}\label{t.useful}
For $p \in (1,\infty)$ and $\alpha > -1$ we have the two-sided bound
\[
\frac{1}{2(\alpha+p+1)^{p+2}} \leq \int_0^1 \lambda^\alpha(1-\lambda)^p \d{\lambda} \leq \frac{2}{\alpha+p+1}.
\]
\end{lemma}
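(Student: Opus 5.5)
We need to prove Lemma \ref{t.useful}: for $p\in(1,\infty)$, $\alpha>-1$,
\[
\frac{1}{2(\alpha+p+1)^{p+2}}\le \int_0^1\lambda^\alpha(1-\lambda)^p\d\lambda\le \frac{2}{\alpha+p+1}.
\]
Call the integral $J$; it equals the Beta function $B(\alpha+1,p+1)$.

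\textbf{Upper bound.} This is immediate. Since $(1-\lambda)^p\le 1$ on $[0,1]$,
\[
J\le \int_0^1\lambda^\alpha\d\lambda=\frac{1}{\alpha+1}.
\]
This alone is not the claimed bound when $\alpha+1$ is small, so I would also use integration by parts, which is legitimate because $\alpha>-1$ and $p>1$ kill the boundary terms:
\[
J=\frac{p}{\alpha+1}\int_0^1\lambda^{\alpha+1}(1-\lambda)^{p-1}\d\lambda .
\]
The cleaner route is the exact identity
\[
\int_0^1\lambda^{\alpha}(1-\lambda)^{p-1}\d\lambda=B(\alpha+1,p)=J\,\frac{\alpha+p+1}{p}.
\]
The left-hand integral is at most $B(\alpha+1,p)\le 1/(\alpha+1)$, which does not help directly. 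Instead I would split the domain:
\begin{itemize}
\item on $[0,\tfrac12]$, use $\lambda^\alpha\le 1$ when $\alpha\ge0$;
\item when $\alpha\in(-1,0)$, use monotonicity.
\end{itemize}
The simplest honest argument is the identity $J=\frac{p}{\alpha+p+1}B(\alpha+1,p)$ combined with the elementary bound $B(\alpha+1,p)\le 2/p$. That bound needs checking, and it is false as $\alpha\to-1$. So I would first test whether the claimed upper bound is even uniformly true there: as $\alpha\to-1$, $J\to\infty$ while $2/(\alpha+p+1)\to 2/p$. The upper bound therefore fails near $\alpha=-1$. The stated lemma presumably intends $\alpha\ge0$, or at least $\alpha$ bounded away from $-1$, as in the Moser iteration where $\alpha$ is large. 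I would add that restriction. For $\alpha\ge0$ the bound follows from the identity above together with $B(\alpha+1,p)\le\int_0^1(1-\lambda)^{p-1}\d\lambda=1/p$, giving
\[
J\le \frac{1}{\alpha+p+1}.
\]

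\textbf{Lower bound.} I would restrict to $\lambda\in[1-\delta,1]$ with $\delta=1/(\alpha+p+1)\le 1$. There
\[
\lambda^\alpha\ge(1-\delta)^{\alpha}\ge (1-\delta)^{1/\delta}\ge e^{-1}\cdot\tfrac12
\]
when $\alpha\ge0$. For $\alpha<0$ we have $\lambda^\alpha\ge1$. Hence
\[
J\ge c\int_{1-\delta}^1(1-\lambda)^p\d\lambda=\frac{c\,\delta^{p+1}}{p+1}\ge \frac{c}{(\alpha+p+1)^{p+2}},
\]
using $p+1\le\alpha+p+1$ when $\alpha\ge0$. If $\alpha\in(-1,0)$ then $\alpha+p+1>p$, and the constant still works up to a bounded factor.

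\textbf{Main obstacle.} The only delicate point is matching the explicit constant $\tfrac12$ in the lower bound. The factor $(1-\delta)^{1/\delta}$ is as small as $e^{-1}$ only in the limit, and in the case $\delta$ near $1$ the estimate $(1-\delta)^\alpha$ must be handled directly; for instance with $\delta=1/2$ the factor becomes $2^{-\alpha}$. I expect the point requiring care is choosing $\delta=\frac{1}{\alpha+p+1}$ versus $\frac{1}{\alpha+1}$ so that the numerical constant $\frac12$ is recovered. This bookkeeping is where I would follow \cite[Lemma A.6]{CN26}.
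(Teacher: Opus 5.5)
You are right that the upper bound cannot be proved as stated, and the failure is not confined to a neighbourhood of $\alpha=-1$. For $\alpha=-\tfrac12$, $p=2$ one computes $\int_0^1\lambda^{-1/2}(1-\lambda)^2\,\mathrm{d}\lambda=\tfrac{16}{15}>\tfrac45=\tfrac{2}{\alpha+p+1}$. More generally, for every fixed $\alpha\in(-1,0)$ the inequality is equivalent to $B(\alpha+1,p)\le 2/p$, and it fails once $p$ is large, since $p\,B(\alpha+1,p)\sim\Gamma(\alpha+1)\,p^{-\alpha}\to\infty$. So ``$\alpha$ bounded away from $-1$'' is not a valid repair, whereas $\alpha\ge0$ is. On that range your argument ($J=\frac{p}{\alpha+p+1}B(\alpha+1,p)$ together with $B(\alpha+1,p)\le\frac1p$) is correct and even gives $J\le\frac{1}{\alpha+p+1}$. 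The paper supplies no argument of its own (it only points to \cite[Lemma A.6]{CN26}), so there is no route to compare with. Note, however, that Proposition~\ref{t.qualitative-bnd-NT} uses the lemma at $\alpha=\eps-2$, which lies in $(-1,0)$ when $1<\eps<2$, so it is not only large $\alpha$ that matter. There the defect merely costs an $\eps$-dependent constant, e.g.\ through $J\le\frac{1}{\alpha+1}$.

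The genuine gap in your proposal is the lower bound with the stated constant $\tfrac12$. The chain $\lambda^\alpha\ge(1-\delta)^{1/\delta}\ge\frac{1}{2e}$ only yields $J\ge\frac{1}{2e(\alpha+p+1)^{p+2}}$, and deferring the missing factor to the reference is not a proof. It closes with your own choice $N:=\alpha+p+1$, $\delta:=1/N$, provided you keep the exponent $\alpha$ rather than enlarging it to $1/\delta$. For $\alpha\ge0$,
\[
J\ \ge\ \Big(1-\frac1N\Big)^{\alpha}\int_{1-1/N}^{1}(1-\lambda)^p\,\mathrm{d}\lambda=\Big(1-\frac1N\Big)^{\alpha}\frac{1}{(p+1)N^{p+1}}.
\]
If $\alpha\ge1$, Bernoulli's inequality gives $(1-\frac1N)^\alpha\ge1-\frac{\alpha}{N}=\frac{p+1}{N}$, hence $J\ge N^{-(p+2)}$. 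If $0\le\alpha<1$, then $(1-\frac1N)^\alpha\ge\frac{N-1}{N}=\frac{\alpha+p}{N}\ge\frac{p+1}{2N}$ because $p>1$, hence $J\ge\frac{1}{2N^{p+2}}$. For $-1<\alpha<0$, simply use $\lambda^\alpha\ge1$ on all of $[0,1]$: $J\ge\frac{1}{p+1}\ge\frac{1}{2N^{p+2}}$, since $N>p>1$ gives $2N^{p+2}>2p>p+1$; this replaces your vague ``up to a bounded factor''. With these fixes you obtain the correct version of the lemma: the lower bound holds with constant $\tfrac12$ for all $\alpha>-1$, and the upper bound (with constant $1$) holds for $\alpha\ge0$.
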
 

Finally, we recall the well-known~\emph{fast geometric convergence}. A proof can be found in~\cite[Chapter I.4, Lemma 4.1]{DiB93}.
\begin{lemma}\label{t.FGC}
Let $\{\sfY_i\}_{i \in \N_0}$ be a sequence of positive numbers satisfying recursive inequalities
\begin{equation*}
\sfY_{i+1} \leq C\,\sfb^i\sfY_i^{1+\beta},
\end{equation*}
where $C>0$, $\sfb>1$ and $\beta>0$ are given constants independent of $i \in \N_0$. Then
$\sfY_i \to 0$ as $i\to \infty$, provided that $ \sfY_0 \leq C^{-1/\beta}\sfb^{-1/\beta^2}$.
\end{lemma}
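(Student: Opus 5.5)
This is the classical fast geometric convergence lemma, and the plan is to prove it by induction. We show that $\sfY_i$ is dominated by a geometric sequence with ratio $\sfb^{-1/\beta}<1$. Concretely, we aim to prove
\[
\sfY_i \leq \sfb^{-i/\beta}\,\sfY_0 \qquad \text{for all } i \in \N_0 .
\]
Since $\sfb>1$ and $\beta>0$, the factor $\sfb^{-i/\beta}$ tends to $0$, so this bound gives $\sfY_i\to 0$ at once.

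The base case $i=0$ is trivial. For the inductive step, assume $\sfY_i \leq \sfb^{-i/\beta}\sfY_0$ and insert it into the recursion:
\[
\sfY_{i+1} \leq C\,\sfb^i\,\sfb^{-i(1+\beta)/\beta}\,\sfY_0^{1+\beta} = C\,\sfb^{-i/\beta}\,\sfY_0^{\beta}\,\sfY_0 .
\]
It therefore suffices to check that $C\,\sfY_0^\beta \leq \sfb^{-1/\beta}$, because then the right-hand side is at most $\sfb^{-(i+1)/\beta}\sfY_0$. Raising the hypothesis $\sfY_0 \leq C^{-1/\beta}\sfb^{-1/\beta^2}$ to the power $\beta$ gives $\sfY_0^\beta \leq C^{-1}\sfb^{-1/\beta}$, which is exactly this requirement. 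This closes the induction.

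The only point needing care is choosing the decay rate $\sfb^{-1/\beta}$ in the ansatz. This exponent is what makes the growth factor $\sfb^i$ cancel against the extra power $\sfY_i^{\beta}$: indeed $\sfb^i\cdot \sfb^{-i}=1$, leaving only the factor $\sfb^{-i/\beta}$. The rest is routine bookkeeping of exponents, and positivity of the $\sfY_i$ guarantees that all the powers involved are well defined and monotone.
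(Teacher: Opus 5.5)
Your proof is correct and is essentially the standard argument. The paper does not prove this lemma itself; it cites DiBenedetto's Lemma I.4.1, whose proof is this same induction showing $\sfY_i \leq \sfb^{-i/\beta}\sfY_0$, and your argument uses only the hypotheses stated here ($C>0$, $\sfb>1$, $\beta>0$ and positivity of the $\sfY_i$).
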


\subsubsection{Functional inequalities}
Firstly, we state the following parabolic version of fractional Sobolev inequality, whose proof is similar to~\cite[Propositions A.2 and A.3]{Lia24a}, with a different exponent $\kappa$, since here we collect the supremum over time of the spatial $L^p$ norm of $u$.
\begin{proposition}\label{FS}
Let $p \geq 1$, $s \in (0,1)$ and set
\[
\kappa_\ast:=\begin{cases}
\frac{d}{d-sp} \quad &\textrm{if}\quad sp<d,\\
2 \quad &\textrm{if}\quad sp\geq d.
\end{cases}
\]
For every function $w \in L^\infty\left(t_1,t_2\,; L^p(B_R)\right) \cap L^p(t_1,t_2\,;W^{s,p}(B_R))$
which is compactly supported in $B_{(1-\mathsf{d})R}$ for some $\mathsf{d} \in (0,1)$ and for a.e. $t \in (t_1,t_2)$, we have
\begin{align*}
&\int_{t_1}^{t_2}\int_{B_R}|w|^{\kappa p}\dxt \\
& \quad \leq C\Bigg[R^{sp}\int_{t_1}^{t_2}\iint \nolimits_{B_R \times B_R}\frac{|w(x,t)-w(y,t)|^p}{|x-y|^{d+sp}}\dxyt+\frac{1}{\mathsf{d}^{d+sp}}\int_{t_1}^{t_2}\int_{B_R}|w|^p\dxt\Bigg] \\
&\quad \quad \quad \times \left(\sup_{t_1<t<t_2}\dashint_{B_R}|w(t)|^p\d{x}\right)^{\frac{\kappa_\ast-1}{\kappa_\ast}}
\end{align*}
with $\kappa:=1+\frac{\kappa_\ast-1}{\kappa_\ast}$, where $C=C(s,p,d)>0$. 
\end{proposition}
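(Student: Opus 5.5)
The plan is to reduce the statement, for each fixed time slice, to the classical fractional Sobolev inequality. We then integrate in time and interpolate with the $L^\infty_tL^p_x$ quantity through H\"older's inequality. We may assume $R=1$ by scaling $x\mapsto Rx$. Under this scaling the Gagliardo term picks up exactly $R^{sp}$, and the averaged supremum is dilation invariant, so both sides scale consistently.

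\textbf{Step 1: a slice-wise Sobolev bound.} Fix $t$ and write $w=w(\cdot,t)$, supported in $B_{1-\mathsf d}$. The goal is
\[
\|w\|_{L^{\kappa_\ast p}(B_1)}^p\le C\Big[[w]^p_{W^{s,p}(B_1)}+\mathsf d^{-d-sp}\|w\|^p_{L^p(B_1)}\Big].
\]
Extend $w$ by zero to $\R^d$. The fractional Sobolev inequality on $\R^d$ (see~\cite{DNPV12}) gives $\|w\|_{L^{p^\ast_s}}^p\le C[w]^p_{W^{s,p}(\R^d)}$ when $sp<d$; here $p^\ast_s=\kappa_\ast p$. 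The Gagliardo seminorm over $\R^d\times\R^d$ splits as the part over $B_1\times B_1$ plus twice
\[
\int_{B_{1-\mathsf d}}|w(x)|^p\int_{\R^d\setminus B_1}\frac{\d y}{|x-y|^{d+sp}}\d x .
\]
For $x\in B_{1-\mathsf d}$ we have $|x-y|\ge\mathsf d$, so the inner integral is at most $C\mathsf d^{-sp}\le C\mathsf d^{-d-sp}$. This gives the slice bound.

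When $sp\ge d$ we take $\kappa_\ast=2$ and choose an auxiliary $s'<s$ with $s'p<d$ and $d/(d-s'p)\ge2$. This is possible because $s'p$ can be taken close to $d$, and if $d=1$ and $p\le1$ we still have room since $sp\geq d$ forces $p>1$. We apply the previous case with $s'$ and use $[w]_{W^{s',p}(B_1)}\le C[w]_{W^{s,p}(B_1)}+C\|w\|_{L^p}$ on the unit ball. We then lower the exponent to $2p$ by H\"older on the bounded ball. The constants depend only on $(d,s,p)$.

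\textbf{Step 2: interpolation and integration in time.} Set $\kappa=1+\frac{\kappa_\ast-1}{\kappa_\ast}$, so that $\kappa p=p+p\frac{\kappa_\ast-1}{\kappa_\ast}$. Write $|w|^{\kappa p}=|w|^{p}\,|w|^{p(\kappa_\ast-1)/\kappa_\ast}$ and apply H\"older in $x$ with exponents $\kappa_\ast$ and $\kappa_\ast/(\kappa_\ast-1)$:
\[
\int_{B_1}|w|^{\kappa p}\d x\le \|w\|^p_{L^{\kappa_\ast p}}\Big(\int_{B_1}|w|^p\d x\Big)^{\frac{\kappa_\ast-1}{\kappa_\ast}}.
\]
Both factors are as intended. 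The first is controlled by Step 1. The second is bounded by $|B_1|^{(\kappa_\ast-1)/\kappa_\ast}$ times the supremum in time of the average, which gives the averaged form in the statement. Integrating over $(t_1,t_2)$ then yields the claim at $R=1$.

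\textbf{Step 3: scaling back.} Rescale to general $R$. The term $R^{sp}$ in front of the Gagliardo integral, the unchanged factor $\mathsf d^{-d-sp}$, and the averaged $L^p$ supremum all come out consistently; the $R^d$ factors cancel because of the averaging. This last point is the step I would check most carefully.

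The main obstacle is the case $sp\ge d$. There, Step 1 needs the substitute embedding and the localisation to the ball, and one must make sure the $\mathsf d$-dependence stays of the stated form $\mathsf d^{-(d+sp)}$ rather than picking up extra powers. The tail estimate of Step 1 is the source of that factor, and it survives the substitution because the embedding constants do not depend on $\mathsf d$.
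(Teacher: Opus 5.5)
Your proposal is correct and follows essentially the argument the paper defers to (Liao's Propositions A.2--A.3). That argument extends $w$ by zero and applies the fractional Sobolev inequality on $\R^d$ slice by slice, controlling the exterior interaction via $|x-y|\geq \mathsf{d}R$; it then uses H\"older interpolation against $\sup_t\dashint_{B_R}|w(t)|^p$ and integrates in time. Your bookkeeping actually gives the better factor $\mathsf{d}^{-sp}$ in place of $\mathsf{d}^{-(d+sp)}$ (the stated power is what the cruder bound $|x-y|\geq \mathsf{d}|y|$ produces), and passing to an auxiliary $s'<s$ with $d/2\leq s'p<d$ when $sp\geq d$ is the standard device.
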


We provide a Giagliardo--Nirenberg type estimate, retrieved from~\cite[Lemma 2.2]{BK24}.

\begin{lemma}\label{t.GN}
Let $1 \leq m \leq (p \vee 2)$. Suppose that the positive exponents $\widetilde{q}$ and $\widetilde{r}$ satisfy
\[
\frac{1}{\widetilde{q}}+\frac{1}{\widetilde{r}}\left(\frac{sp}{d}+\frac{p}{m}-1\right)=\frac{1}{m},
\]
where they obey
\[
\left\{
    \begin{array}{lll}
    \widetilde{q} \in \left[m, \frac{dp}{d-sp}\right], \quad &\widetilde{r} \in [p,\infty),  & \mbox{if $d>sp$,} \\[3mm]
     \widetilde{q} \in [m, \infty), \quad &\widetilde{r} \in \left(\frac{msp}{d}+p-m,\infty\right),  & \mbox{if $d=sp$,} \\[3mm]
      \widetilde{q} \in [m, \infty], \quad &\widetilde{r} \in \left[\frac{msp}{d}+p-m,\infty\right),  & \mbox{if $d<sp$.}
    \end{array}
    \right.
\]
There exists a constant $C(d,s,p,\widetilde{q},\widetilde{r})<\infty$ such that
\begin{align*}
\left(\int_{t_1}^{t_2} \|f(t)\|_{L^{\widetilde{q}}(B_R)}^{\widetilde{r}}\d{t}\right)^{\frac{d\widetilde{q}}{sp\widetilde{q}+d\widetilde{r}}} &\leq C \left(\int_{t_1}^{t_2}[f(t)]_{W^{s,p}(B_R)}^p\d{t} \right.\\
& \left. \quad \quad \quad +R^{-sp} \int_{t_1}^{t_2} \|f(t)\|_{L^p(B_R)}^p\d{t} 
+\sup_{t_1<t<t_2}\|f(t)\|_{L^m(B_R)}^m\right)
\end{align*}
whenever the right side is finite.
\end{lemma}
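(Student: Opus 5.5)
The statement just above is the Gagliardo--Nirenberg type estimate of Lemma~\ref{t.GN}. The plan is to prove it slice by slice in time, with a fractional Gagliardo--Nirenberg interpolation on $B_R$, and then to integrate in $t$ by H\"older's inequality.

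\emph{Step 1: the fixed-time interpolation.} For a.e.\ $t$, let $p^\ast$ be the fractional Sobolev exponent: $p^\ast=\frac{dp}{d-sp}$ if $sp<d$, any large finite exponent if $sp=d$, and $\infty$ if $sp>d$. The fractional Sobolev embedding on the ball, in its scale-invariant form, gives
\[
\|f(t)\|_{L^{p^\ast}(B_R)}^p\leq C\Big([f(t)]_{W^{s,p}(B_R)}^p+R^{-sp}\|f(t)\|_{L^p(B_R)}^p\Big)=:C\,\mathcal{E}(t).
\]
For $\widetilde q\in[m,p^\ast]$, H\"older interpolation between $L^m$ and $L^{p^\ast}$ gives $\|f\|_{\widetilde q}\leq \|f\|_m^{1-\vartheta}\|f\|_{p^\ast}^{\vartheta}$, where $\frac1{\widetilde q}=\frac{1-\vartheta}{m}+\frac{\vartheta}{p^\ast}$. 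When $sp>d$, one uses the Morrey-type bound $\|f\|_\infty\lesssim R^{s-d/p}\mathcal{E}^{1/p}$ in place of the Sobolev bound.

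\emph{Step 2: integrate in time.} Raise the slice estimate to the power $\widetilde r$ and integrate:
\[
\int_{t_1}^{t_2}\|f\|_{\widetilde q}^{\widetilde r}\d t\leq \Big(\sup_t\|f\|_m\Big)^{(1-\vartheta)\widetilde r}\int_{t_1}^{t_2}\|f\|_{p^\ast}^{\vartheta\widetilde r}\d t.
\]
The constraint linking $\widetilde q$ and $\widetilde r$ should be exactly the condition $\vartheta\widetilde r=p$, so that the last integral is controlled by $\int\mathcal{E}\d t$. I will verify this by direct algebra, solving for $\vartheta$ from the relation $\frac{1}{\widetilde q}+\frac{1}{\widetilde r}\left(\frac{sp}{d}+\frac{p}{m}-1\right)=\frac1m$ and substituting $\vartheta=p/\widetilde r$.

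The ranges of $\widetilde r$ in the three cases are what guarantee $\vartheta\in[0,1]$, that is, $\widetilde q\in[m,p^\ast]$. The restriction $m\leq p\vee2$ should enter here: it is needed when $m>p$ (possible only when $p<2$) so that the interpolation stays between admissible exponents.

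\emph{Step 3: reach the stated form.} Write $X=\int\mathcal{E}\d t$ and $Y=\sup_t\|f\|_m^m$. Step 2 gives $\int\|f\|_{\widetilde q}^{\widetilde r}\d t\leq C\,X\,Y^{(1-\vartheta)\widetilde r/m}$. Raising to the power $\frac{d\widetilde q}{sp\widetilde q+d\widetilde r}$, the exponents of $X$ and $Y$ should sum to one by the scaling relation. Young's inequality then bounds the product by $X+Y$, which is the right-hand side of Lemma~\ref{t.GN}.

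The main obstacle is the bookkeeping in Steps 2 and 3: checking that the exponent identity makes the total homogeneity equal to one, and handling the borderline case $sp=d$, where $p^\ast$ must be chosen finite but large depending on $\widetilde q$ and $\widetilde r$. This is presumably why the range of $\widetilde r$ is open there. The powers of $R$ are handled by first rescaling to $R=1$, after which the identity is dimensionally consistent. Since the result is retrieved from \cite[Lemma 2.2]{BK24}, I would follow that computation and only check that the local ball version with the $R^{-sp}$ lower-order term goes through unchanged.
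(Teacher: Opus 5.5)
Your argument is complete only for $sp<d$. There $1/p^\ast=1/p-s/d$, and H\"older interpolation between $L^m$ and $L^{p^\ast}$ with $\vartheta=p/\widetilde r$ reproduces the stated relation exactly. Step~3 then closes, because the relation gives $\frac{d\widetilde q}{sp\widetilde q+d\widetilde r}=\frac{m}{m+\widetilde r-p}=:\gamma\in(0,1]$, hence $X^\gamma Y^{1-\gamma}\le X+Y$.

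For $sp\ge d$, however, the claim that the constraint ``is exactly $\vartheta\widetilde r=p$'' is false for the route you describe, and the algebra you postpone would show it. If $sp>d$ and you interpolate between $L^m$ and $L^\infty$ (Morrey), then $\frac1{\widetilde q}=\frac{1-\vartheta}{m}$, and the stated relation forces $\vartheta\widetilde r=\frac{msp}{d}+p-m=p+m\big(\frac{sp}{d}-1\big)>p$. If $sp=d$ and you use a finite $p^\ast$, it forces $\vartheta\widetilde r=p/(1-m/p^\ast)>p$.

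In both cases Step~2 leaves $\int_{t_1}^{t_2}\mathcal{E}(t)^{\vartheta\widetilde r/p}\,\mathrm{d}t$ with a power strictly larger than one, and this is not controlled by the right-hand side. The energy $\mathcal{E}$ is only integrable in time. At a fixed time it can be arbitrarily large compared with $\|f(t)\|_{L^m}$ (take oscillating profiles), so the excess power cannot be traded for $\sup_t\|f\|_{L^m}$, and sending $p^\ast\to\infty$ does not help. The underlying reason is that for $sp\ge d$ the stated relation encodes the exponent $\frac1p-\frac sd\le 0$ in place of $\frac1{p^\ast}$. No embedding into a single Lebesgue space followed by H\"older can produce that.

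What is missing is the genuine fractional Gagliardo--Nirenberg inequality, applied at a.e.\ time:
\[
\|f\|_{L^{\widetilde q}(B_R)}\le C\,\|f\|_{L^m(B_R)}^{1-\vartheta}\,\mathcal{E}^{\vartheta/p},
\qquad
\frac{1}{\widetilde q}=\frac{1-\vartheta}{m}+\vartheta\Big(\frac1p-\frac sd\Big),
\qquad
\vartheta=\frac{p}{\widetilde r}.
\]
It is scale invariant in $R$ with your $\mathcal{E}$. It holds for $\vartheta\in(0,1]$ if $sp<d$, for $\vartheta\in(0,1)$ if $sp=d$ (hence $\widetilde r>p$ there), and up to $\widetilde q=\infty$, i.e.\ $\widetilde r=\frac{msp}{d}+p-m$, if $sp>d$.

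You can import it from the Brezis--Mironescu theory on $\mathbb{R}^d$ (no exceptional case, since $s<1$), using an extension operator bounded simultaneously on $L^m(B_1)$ and $W^{s,p}(B_1)$, and then rescaling. For $sp=d$, equivalently, interpolate $\|f\|_{W^{\vartheta s,q}}\lesssim\|f\|_{L^m}^{1-\vartheta}\|f\|_{W^{s,p}}^{\vartheta}$ with $\frac1q=\frac{1-\vartheta}{m}+\frac{\vartheta}{p}$. Then apply the subcritical embedding $W^{\vartheta s,q}\hookrightarrow L^{\widetilde q}$, which is exactly the relation above.

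For $sp>d$ there is also a hands-on proof, but it needs Morrey's H\"older estimate $|f(x)-f(y)|\lesssim|x-y|^{s-d/p}\mathcal{E}^{1/p}$, not just the sup bound you quote. If $|f(x_0)|\approx\|f\|_{L^\infty(B_R)}$, then $|f|\ge\frac12\|f\|_{L^\infty}$ on $B_r(x_0)\cap B_R$ with $r=\min\{R,c(\|f\|_{L^\infty}/\mathcal{E}^{1/p})^{1/(s-d/p)}\}$. Comparing with $\|f\|_{L^m}$ (and, when $r=R$, with the $R^{-sp}\|f\|_{L^p}^p$ part of $\mathcal{E}$) gives the $L^\infty$ endpoint, and finite $\widetilde q$ then follows by H\"older.

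With this inequality in place of Step~1, your Steps~2 and~3 go through verbatim in all three cases. The paper itself gives no proof and only imports the lemma from the cited reference. Note also that the hypothesis $m\le p\vee 2$ plays no role in this argument, contrary to your guess.
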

Finally, we record an inequality for the Gagliardo semi-norm, whose proof can be extracted from~\cite[Lemma 2.3]{BK24}.
\begin{lemma}\label{t.alg-est-3}
Let $u \in W^{s,p}(B_R)$. Then for any $0\leq b \leq a$, we have
\[
\left[(u-b)_-\right]_{W^{s,p}(B_R)}^p \leq \left[(u-a)_-\right]_{W^{s,p}(B_R)}^p.
\]
and
\[
\left[(u-a)_+\right]_{W^{s,p}(B_R)}^p \leq \left[(u-b)_+\right]_{W^{s,p}(B_R)}^p.
\]
\end{lemma}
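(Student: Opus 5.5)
The plan is to reduce both inequalities to one pointwise fact: a $1$-Lipschitz function of $v$ has Gagliardo seminorm at most that of $v$. Concretely, I will write the lower truncation at level $b$ as a $1$-Lipschitz function of the lower truncation at level $a$, and similarly for the upper truncations with the roles of $a$ and $b$ exchanged. Then I integrate the resulting pointwise bound against the kernel $|x-y|^{-d-sp}$ over $B_R\times B_R$.

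\emph{First inequality.} Set $c:=a-b\geq 0$, $v:=(u-a)_-=(a-u)\vee 0$, and $\varphi(\tau):=(\tau-c)_+$ for $\tau\geq 0$. I will check the identity $(u-b)_-=\varphi(v)$ pointwise by two cases.
\begin{itemize}
\item If $u(x)\geq a$, then $v(x)=0$ and $\varphi(0)=0$. Also $b-u(x)\leq b-a\leq 0$, so $(u-b)_-(x)=0$ as well.
\item If $u(x)<a$, then $v(x)=a-u(x)$, so $\varphi(v(x))=(a-u(x)-(a-b))_+=(b-u(x))_+=(u-b)_-(x)$.
\end{itemize}
Since $\varphi$ is $1$-Lipschitz, this gives
\[
|(u-b)_-(x)-(u-b)_-(y)|\leq |(u-a)_-(x)-(u-a)_-(y)|
\]
for a.e. $x,y\in B_R$. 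Raising to the power $p$, dividing by $|x-y|^{d+sp}$ and integrating over $B_R\times B_R$ yields the first claim.

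\emph{Second inequality.} Here I set $w:=(u-b)_+$ and check in the same way that $(u-a)_+=(w-c)_+$ with the same $c=a-b$.
\begin{itemize}
\item If $u(x)\leq b$, both sides vanish, since $u(x)-a\leq b-a\leq 0$.
\item If $u(x)>b$, then $w(x)-c=u(x)-a$, so both sides equal $(u(x)-a)_+$.
\end{itemize}
The same Lipschitz argument then gives $[(u-a)_+]_{W^{s,p}(B_R)}^p\leq[(u-b)_+]_{W^{s,p}(B_R)}^p$.

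The only point needing care is the case analysis behind these two truncation identities, which uses nothing beyond $b\leq a$; the hypothesis $b\geq 0$ is not actually needed. Finiteness of the right-hand sides follows because truncations of $u\in W^{s,p}(B_R)$ stay in $W^{s,p}(B_R)$, by the same Lipschitz bound applied to $\tau\mapsto\tau_\pm$. So I do not expect any genuine obstacle.
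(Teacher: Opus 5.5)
Your proof is correct. The paper gives no argument of its own and only refers to \cite[Lemma 2.3]{BK24}. Your route is the standard one: write $(u-b)_-=\bigl((u-a)_- -(a-b)\bigr)_+$ and $(u-a)_+=\bigl((u-b)_+ -(a-b)\bigr)_+$, note that $\tau\mapsto(\tau-c)_+$ is $1$-Lipschitz to compare the integrands pointwise, and then integrate against $|x-y|^{-d-sp}$. You are also right that the hypothesis $b\geq 0$ is not needed.
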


\subsection{Weak formulation}
We state the definition of local weak solutions to~\eqref{e.NT}-\eqref{e.kernel}. To ease notation, we denote by $J_p(a):=|a|^{p-2}a$ for $a \in \R$.

\begin{definition}\label{def-of-NT}
A measurable function $u: \R^d\times (0,T) \rightarrow \R$ is called a \emph{weak sub(super)-solution} to~\eqref{e.NT} provided that the following conditions are satisfied: 
\begin{itemize}
\item $u \in L^\infty(0,T ; W^{s,p}(\R^d)) \cap C\left([0,T] ; L^p(\Omega)\right)$,
\item the energy identity 
\begin{align}\label{e.weak-formulation}
&-\iint_{\Omega_T}|u|^{p-2}u\cdot \partial_t\phi\dxt \notag\\
&\quad +\int_0^T\iint_{\R^d \times \R^d}J_p(u(x,t)-u(y,t))(\phi(x,t)-\phi(y,t))\sfk(x,y,t)\dxyt \leq (\geq ) 0
\end{align}
is valid for any test function $0 \leq \phi \in \mathscr{T}$, where the class of testing functions $\mathscr{T}$ is given by
\[
\mathscr{T}:=\left\{\varphi \in L^p\left(0,T\,;W_{0}^{s,p}(\Omega)\right) \cap W^{1,p}\left(0,T\,; L^{p}(\Omega)\right) \,\,\,\Bigg|\, \begin{array}{c}\varphi(x,0)=\varphi(x,T)=0 \\\textrm{for a.e.} \,\,x \in \Omega\end{array}\right\}.
\]
\end{itemize}
We call $u$ a (local) \emph{weak solution} provided that $u$ is both a weak sub-solution and a weak super-solution.
\end{definition}

\begin{remark}
Even under the minimal assumption that $u \in  L^\infty(0,T ; W^{s,p}(\R^d))$ the $L^p$-continuity follows a-priori. Indeed, a rigorous analysis using exponential mollification establishes this fact;  we refer to~\cite[footnotes, pages 15--16]{MNY23} for an in-depth discussion.
\end{remark}

\subsection{Caccioppoli inequalities}
 Here we show several energy estimates embodied by local weak solutions. Let us introduce the function
\begin{equation}\label{e.def-of-g}
\mathfrak{g}_\pm(w,k(t)):=\pm(p-1)\int_{k(t)}^w|\tau|^{p-2}(\tau-k(t))_\pm\d{\tau},
\end{equation}
where $k(t)$ is an arbitrary absolutely continuous function on $(0,T)$. Function $\mathfrak{g}_\pm(w,k(t))$ is a purely doubly nonlinear term: it is the main obstacle to regularity, since it prevents, for instance, the use of Harnack inequality for a proof of H\"older continuity. See \cite{CHYSS} for instance for an overview. 
\begin{remark}
By virtue of the fundamental theorem of calculus for the bivariate function after changing variables (see~\cite[(3.5), page 133]{FMS}) and the monotonicity of $\tau \mapsto |\tau|^{\frac{1}{p-1}-1}$, we have
\begin{align}\label{e.fct-on-g}
\partial_t\mathfrak{g}_\pm(w,k(t))&=\pm \partial_t \left[\int_{|k(t)|^{p-2}k(t)}^{|w|^{p-2}w}\left(|\tau|^{\frac{1}{p-1}-1}\tau -k(t)\right)_\pm \d{\tau}\right] \notag\\[3mm]
&=\pm \int_{|k(t)|^{p-2}k(t)}^{|w|^{p-2}w}\partial_t\left(|\tau|^{\frac{1}{p-1}-1}\tau -k(t)\right)_\pm \d{\tau}\pm \partial_t\left(|w|^{p-2}w\right)\left(w-k(t)\right)_\pm \notag\\[3mm]
&=\mp k^\prime (t) \big(|w|^{p-2}w-|k(t)|^{p-2}k(t)\big)_\pm \pm \partial_t\left(|w|^{p-2}w\right)\left(w-k(t)\right)_\pm
\end{align}
with upper or lower sign taken together. Here the integrand $|\tau|^{\frac{1}{p-1}-1}\tau$ is understood as $0$ when $\tau=0$.  
\end{remark}

The following Caccioppoli estimates have a time-dependent truncation level $k(t)$. The idea is basically taken from~\cite{KW24, Lia24b}.
\begin{proposition}\label{t.caccioppoli1}
Let $u$ be  a weak sub(super)-solution to~\eqref{e.NT}-\eqref{e.kernel} in the sense of Definition~\ref{def-of-NT}. Let us fix a cylinder $Q_{\rho,\tau}(z_0) \Subset \Omega_T$ for $z_0=(x_0,t_0) \in \Omega_T$. Let $\zeta$ be a nonnegative smooth cutoff function such that $\zeta(\cdot,t)$ is compactly supported in $B_\rho(x_0)$ for all $t \in (t_0-\tau,t_0)$.  There exists $C(p,\Lambda)>0$ such that

\begin{align}\label{e.caccioppoli1}
&\sup\limits_{t_0-\tau<t<t_0}\int_{B_\rho(x_0)}\zeta^p\mathfrak{g}_\pm(u,k(t))\d{x} \notag\\[4pt]
&\quad \quad \quad +\int_{t_0-\tau}^{t_0}\iint_{B_\rho(x_0) \times B_\rho(x_0)}\left(\zeta^p(x,t) \wedge  \zeta^p(y,t)\right) \dfrac{\big|w_{\pm}(x,t)-w_{\pm}(y,t)\big|^p}{|x-y|^{d+sp}}\dxyt  \notag \\[4pt]
& \quad \quad \quad +\iint_{Q_{\rho,\tau}(z_0)}\zeta^pw_\pm(x,t)\d{x}\left(\int_{\R^d}\frac{w_\mp^{p-1}(y,t)}{|x-y|^{d+sp}}\d{y}\right)\d{t} \notag\\[4pt]
& \quad \quad \leq \int_{B_\rho(x_0) \times \{t_0-\tau\}}\zeta^p\mathfrak{g}_\pm(w,k(t))\d{x}+C\iint_{Q_{\rho,\tau}(z_0)}\left|\partial_t \zeta^p\right|\mathfrak{g}_\pm(u,k(t))\dxt \notag\\[4pt]
&\quad \quad \quad +C\int_{t_0-\tau}^{t_0}\iint_{B_\rho(x_0)\times B_\rho(x_0)}\left(w_\pm^p(x,t) \wedge  w_\pm^p(y,t)\right)\dfrac{\big|\zeta(x,t)-\zeta(y,t)\big|^p}{|x-y|^{d+sp}}\dxyt \notag\\[4pt]
&\quad \quad \quad +C\iint_{Q_{\rho,\tau}(z_0)}\zeta^pw_\pm(x,t)\d{x} \left(\sup_{x\,\in\, \supp \zeta(\cdot,t)}\int_{\R^d \setminus B_\rho(x_0)}\frac{w_\pm^{p-1}(y,t)}{|x-y|^{d+sp}}\d{y}\right)\d{t} \notag\\[4pt]
&\quad \quad  \quad \mp C\iint_{Q_{\rho,\tau}(z_0)}k^\prime(t)\zeta^p\Big(|u|+|k(t)|\Big)^{p-2}w_\pm(x,t)\dxt,
\end{align}
for any arbitrary absolutely continuous function $k:(0,T) \rightarrow \R$, where we used $w_\pm(x,t):=\left(u(x,t)-k(t)\right)_\pm$ to shorten the notation. 
\end{proposition}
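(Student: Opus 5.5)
We test the weak formulation \eqref{e.weak-formulation} with $\phi=\zeta^p w_\pm$, where $w_\pm=(u-k(t))_\pm$. Because $u$ has no time derivative, this requires the usual regularization: a Steklov average, or the exponential time mollification mentioned in the Remark after Definition~\ref{def-of-NT}. We further multiply by a cutoff in time $\mathbf 1_{(t_0-\tau,t]}$, smoothed, and then let the mollification parameter tend to zero. We treat the sub-solution case with $w_+$; the super-solution case with $w_-$ is symmetric, and the signs in \eqref{e.caccioppoli1} are taken together. The test function is admissible, since $\zeta(\cdot,t)$ has compact support in $B_\rho(x_0)\Subset\Omega$ and $k$ is absolutely continuous.

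\emph{Time term.} After removing the mollification, the parabolic part formally becomes $\iint \partial_t(|u|^{p-2}u)\,\zeta^p w_+$. By \eqref{e.fct-on-g},
\[
\partial_t(|u|^{p-2}u)\,w_+=\partial_t\mathfrak g_+(u,k)+k'(t)\big(|u|^{p-2}u-|k|^{p-2}k\big)_+ .
\]
We integrate the first piece by parts in time against $\zeta^p$. This produces $\int_{B_\rho}\zeta^p\mathfrak g_+(u,k)(\cdot,t)\d{x}$, minus the initial slice at $t_0-\tau$, minus $\iint\partial_t\zeta^p\,\mathfrak g_+$. The second piece is estimated with Lemma~\ref{t.alg-est-1} (with $\alpha=p-1$), which gives $(|u|^{p-2}u-|k|^{p-2}k)_+\le C(|u|+|k|)^{p-2}w_+$. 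This produces the last term of \eqref{e.caccioppoli1} with the sign $\mp$. Taking the supremum over $t$ yields the first term on the left.

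\emph{Nonlocal term.} We split $\R^d\times\R^d$ into $B_\rho\times B_\rho$ and twice the region $B_\rho\times(\R^d\setminus B_\rho)$, using the symmetry of $\sfk$. On $B_\rho\times B_\rho$ we use the now standard pointwise inequality (as in \cite{DKP16}, or \cite[Lemma 3.3]{Lia24b}). Writing $u-k$ at two points (with the same $k(t)$, so that $u(x)-u(y)=(u-k)(x)-(u-k)(y)$), we get
\begin{align*}
J_p(u(x)-u(y))\big(\zeta^p w_+(x)-\zeta^p w_+(y)\big)&\ge c\,(\zeta^p(x)\wedge\zeta^p(y))|w_+(x)-w_+(y)|^p\\
&\quad -C\,(w_+^p(x)\vee w_+^p(y))|\zeta(x)-\zeta(y)|^p\\
&\quad +w_+(x)\zeta^p(x)w_-^{p-1}(y)+w_+(y)\zeta^p(y)w_-^{p-1}(x).
\end{align*}
The interaction terms of the last line, together with the corresponding positive contribution from $y\notin B_\rho$, where $J_p(u(x)-u(y))\ge w_-^{p-1}(y)$, assemble into the third term on the left. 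Combined with \eqref{e.kernel}, this gives the full integral $\int_{\R^d} w_-^{p-1}/|x-y|^{d+sp}$.

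The place needing care is getting the minimum $w_+^p(x)\wedge w_+^p(y)$ instead of the maximum in the $\zeta$-difference term. We follow the argument of \cite{KW24}, assuming without loss of generality $w_+(x)\ge w_+(y)$ and splitting into the cases $\zeta(x)\ge \zeta(y)$ and $\zeta(x)<\zeta(y)$. In the unfavorable case we write $\zeta^p(x)w_+(x)-\zeta^p(y)w_+(y)=\zeta^p(y)(w_+(x)-w_+(y))+(\zeta^p(x)-\zeta^p(y))w_+(x)$. We then use $|\zeta^p(x)-\zeta^p(y)|\le p\,\zeta^{p-1}(y)|\zeta(x)-\zeta(y)|$ together with Young's inequality. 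The dangerous term carries $w_+(x)$, which we absorb partly into $\zeta^p(y)|w_+(x)-w_+(y)|^p$ and partly leave as $w_+(y)^p|\zeta(x)-\zeta(y)|^p$. This absorption step, with constants depending only on $p$, is what I expect to be the main technical obstacle.

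For the off-diagonal part, $x\in B_\rho$ and $y\notin B_\rho$, we have $J_p(u(x)-u(y))\zeta^p w_+(x)\ge -\zeta^p w_+(x)w_+^{p-1}(y)$. Bounding the kernel above by \eqref{e.kernel} and taking the supremum over $x\in\supp\zeta(\cdot,t)$ gives the tail term. Collecting all terms yields \eqref{e.caccioppoli1} with $C=C(p,\Lambda)$.
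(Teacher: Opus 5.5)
Your route is essentially the one the paper sketches: test with $\zeta^p w_\pm$ after exponential mollification, use \eqref{e.fct-on-g} for the parabolic part, split the nonlocal form into $B_\rho\times B_\rho$ and off-diagonal pieces, and use $J_p(u(x)-u(y))\ge -w_+^{p-1}(y)$ for $y\notin B_\rho$. However, two of your steps do not work as written.

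\textbf{The $k'$ term uses the wrong half of Lemma~\ref{t.alg-est-1}.} In the sub-solution case, \eqref{e.fct-on-g} leaves $-\iint k'(t)\,\zeta^p\big(|u|^{p-2}u-|k|^{p-2}k\big)_+\dxt$ on the right. You must bound this by $-C\iint k'(t)\,\zeta^p(|u|+|k|)^{p-2}w_+\dxt$. The upper bound $(|u|^{p-2}u-|k|^{p-2}k)_+\le C_2(|u|+|k|)^{p-2}w_+$ that you invoke does this only where $k'\le 0$. Where $k'>0$ it only lets you discard the term. For $k'\ge 0$ you need the lower bound $(|u|^{p-2}u-|k|^{p-2}k)_+\ge C_1(|u|+|k|)^{p-2}w_+$, which is the paper's ``$\gtrsim_p$'' step; in the $w_-$ case the roles are exchanged. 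This matters: in Section~\ref{Sect.3} the level $k(t)=\widetilde{\mathsf K}_i$ is nondecreasing, and the negativity of $\sfI\sfV$ is exactly what cancels the far tail $\sfI\sfI\sfI_2$. For a sign-changing $k'$ you must split $k'=(k')_+-(k')_-$, since no single constant serves both signs.

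\textbf{The $\min$ on $w_\pm$ cannot come from your absorption.} The pointwise inequality you aim at is false. Take $\zeta(x)=0$, $\zeta(y)=1$ and $u(x)-k=M>u(y)-k=m>0$. Then $J_p(u(x)-u(y))\big(\zeta^pw_+(x)-\zeta^pw_+(y)\big)=-(M-m)^{p-1}m$, whereas your lower bound is $-Cm^p$, because the gradient term has weight $\zeta^p(x)\wedge\zeta^p(y)=0$ and the interaction terms vanish. In your splitting, the main term can absorb $(\zeta^p(y)-\zeta^p(x))(w_+(x)-w_+(y))^p$, leaving exactly the $\zeta^p(x)$-weighted gradient. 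The remainder $(\zeta^p(y)-\zeta^p(x))\,w_+(y)\,(w_+(x)-w_+(y))^{p-1}$ is then of size $(w_+(x)\vee w_+(y))^{p-1}(w_+(x)\wedge w_+(y))\,|\zeta(x)-\zeta(y)|^p$ when $\zeta(x)\ll\zeta(y)$. Young's inequality would need the weight $\zeta^p(y)$ on $|w_+(x)-w_+(y)|^p$, which is no longer available.

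Integrating does not save it either. For $p=2$, let $\zeta=0$ on an annulus $A\subset B_\rho$, and set $u-k=M$ on $A$, $u-k=m$ on $B_\rho\setminus A$, and $u=k$ off $B_\rho$. The form then equals $-2(M-m)m\iint_{(B_\rho\setminus A)\times A}\zeta^2(x)\,\sfk\dxy+O(m^2)$. Meanwhile the gradient and interaction terms in \eqref{e.caccioppoli1} vanish, the tail term is zero, and the $\min$-type cutoff term is $O(m^2)$. So no algebraic lemma on the form gives this estimate.

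What your argument (that of \cite{DKP16}) actually proves is \eqref{e.caccioppoli1} with $w_\pm^p(x)\vee w_\pm^p(y)$ in the cutoff term. That version is all the paper uses afterwards: \eqref{e.caccioppoli2}, \eqref{e.caccioppoli3} and the bound for $\sfI\sfI$ only need $\iint w_\pm^p$ times $\sup_x\int|\zeta(x)-\zeta(y)|^p|x-y|^{-d-sp}\d{y}$. It is not, however, the displayed estimate. The paper itself does not attempt this absorption; it defers the fractional term to \cite[Lemma~3.23]{CCMV25} and \cite{Coz17}.
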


\begin{proof} The fractional term on the left side of~\eqref{e.caccioppoli1} can be dealt with a completely similar argument to~\cite[Lemma 3.23]{CCMV25}, see also \cite{Coz17}. On the other hand, the last term on the right side of~\eqref{e.caccioppoli1} is obtained using identity~\eqref{e.fct-on-g}; a routine computation says, informally, that
\begin{align*}
\iint_{\Omega_T}\partial_t&\Big(|u|^{p-2}u\Big)(u-k(t))_+\zeta^p\dxt \\
&\geq \iint_{\Omega_T}\partial_t\mathfrak{g}_+(u,k(t))\zeta^p\dxt+\iint_{\Omega_T}k^\prime(t)\Big(|u|^{p-2}u-|k(t)|^{p-2}k(t)\Big)_+\zeta^p\dxt\\
&\gtrsim_p \iint_{\Omega_T}\partial_t\mathfrak{g}_+(u,k(t))\zeta^p\dxt+\iint_{\Omega_T}k^\prime(t)\Big(|u|+|k(t)|\Big)^{p-2}(u-k(t))_+\zeta^p\dxt.
\end{align*}
The remaining part is exactly the same as~\cite[Appendix B]{Nak23}. The rigorous proof relies on ``exponential mollification'' techniques. This is first adopted for the doubly nonlinear problems~\cite{BDL21}; in particular, for the nonlocal case, the reader can also consult the arguments of~\cite[Appendix A]{Nak22a} for more details of this point.
\end{proof}

By tailoring the cutoff function to the geometry of the cylinders, a direct application of Proposition~\ref{t.caccioppoli1} yields the following Caccioppoli estimate.

\begin{proposition}\label{t.caccioppoli2}
Let $u$ be  a weak super-solution to~\eqref{e.NT}-\eqref{e.kernel} in the sense of Definition~\ref{def-of-NT}. Let us fix a cylinder $Q_{\rho,\tau}(z_0) \Subset \Omega_T$ for $z_0=(x_0,t_0) \in \Omega_T$. There exists a constant $C(\data)>0$ such that
\begin{align}\label{e.caccioppoli2}
&\sup\limits_{t_0-\tau_2<t<t_0}\int_{B_{r_1}(x_0)\times \{t\}}\mathfrak{g}_\pm(u,k)\d{x}\notag\\[4pt]
&\quad \quad+\int_{t_0-\tau_2}^{t_0}\iint_{B_{r_1}(x_0)\times B_{r_1}(x_0)}\dfrac{\big|w_{\pm}(x,t)-w_{\pm}(y,t)\big|^p}{|x-y|^{d+sp}}\dxyt \notag \\[4pt]
&\quad \quad+\iint_{Q_{r_1,\tau_1}(z_0)}w_\pm(x,t)\left(\int_{\R^d}\dfrac{w_\mp^{p-1}(y,t)}{|x-y|^{d+sp}}\d{y}\right)\dxt \notag \\[4pt]
&\quad \leq \int_{B_{r_1}(x_0)\times \{ t_0-\tau_2\}}\mathfrak{g}_\pm(u,k)\d{x}+C\frac{r_2^{(1-s)p}}{(r_2-r_1)^p}\iint_{Q_{r_2,\tau_2}(z_0)}w_\pm^p\dxt \notag \\[4pt]
&\quad \quad +C\frac{r_2^d}{(r_2-r_1)^{d+p}}\iint_{Q_{r_2,\tau_2}(z_0)}w_{\pm}(x,t)\d{x} \cdot \tail \big(w_\pm(t)\,; B_{r_2}(x_0)\big)^{p-1}\d{t},
\end{align}
and
\begin{align}\label{e.caccioppoli3}
&\sup\limits_{t_0-\tau_2<t<t_0}\int_{B_{r_1}(x_0)\times \{t\}}\mathfrak{g}_\pm(u,k)\d{x}\notag\\[4pt]
&\quad \quad+\int_{t_0-\tau_2}^{t_0}\iint_{B_{r_1}(x_0)\times B_{r_1}(x_0)}\dfrac{\big|w_{\pm}(x,t)-w_{\pm}(y,t)\big|^p}{|x-y|^{d+sp}}\dxyt \notag \\[4pt]
&\quad \quad+\iint_{Q_{r_1,\tau_1}(z_0)}w_\pm(x,t)\left(\int_{\R^d}\dfrac{w_\mp^{p-1}(y,t)}{|x-y|^{d+sp}}\d{y}\right)\dxt \notag \\[4pt]
&\quad \leq \frac{C}{\tau_2-\tau_1} \iint_{Q_{r_2, \tau_2}(z_0)}\mathfrak{g}_\pm(u,k)\dxt+C\frac{r_2^{(1-s)p}}{(r_2-r_1)^p}\iint_{Q_{r_2,\tau_2}(z_0)}w_\pm^p\dxt\notag \\[4pt]
&\quad \quad +C\frac{r_2^{d}}{(r_2-r_1)^{d+sp}}\iint_{Q_{r_2,\tau_2}(z_0)}w_{\pm}(x,t)\d{x} \cdot \tail \big(w_\pm(t)\,; B_{r_2}(x_0) \big)^{p-1}\d{t}
\end{align}
for any concentric cylinders $Q_{r_1,\tau_1} (z_0)\subset Q_{r_2,\tau_2}(z_0) \subset \cQ_R$ and any time-independent level $k \in \R$.
\end{proposition}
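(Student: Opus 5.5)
Both estimates follow from Proposition~\ref{t.caccioppoli1} with a constant level $k(t)\equiv k$, so the last term $\mp C\iint k'(\cdots)$ vanishes, and with a cutoff adapted to the cylinders. Fix $\zeta(x,t)=\psi(x)\chi(t)$. The spatial factor satisfies $0\le\psi\le1$, $\psi\equiv1$ on $B_{r_1}$, $\supp\psi\subset B_{(r_1+r_2)/2}$ and $|\nabla\psi|\le 4/(r_2-r_1)$. For \eqref{e.caccioppoli2} we take $\chi\equiv1$ on $(t_0-\tau_2,t_0]$, so the time derivative term vanishes and the initial term at $t_0-\tau_2$ survives. For \eqref{e.caccioppoli3} we take $\chi$ vanishing at $t_0-\tau_2$, equal to $1$ on $(t_0-\tau_1,t_0]$, with $|\chi'|\le 2/(\tau_2-\tau_1)$. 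Then the initial term disappears and $|\partial_t\zeta^p|\le C/(\tau_2-\tau_1)$ produces the first term on the right of \eqref{e.caccioppoli3}. We apply Proposition~\ref{t.caccioppoli1} on $Q_{r_2,\tau_2}$ and restrict the left side to $B_{r_1}$ and to the appropriate time ranges using $\zeta=1$ there and $\zeta^p\wedge\zeta^p=1$ on $B_{r_1}\times B_{r_1}$. The supremum in time on the left of \eqref{e.caccioppoli3} is then over $(t_0-\tau_1,t_0)$; I read the statement's $\tau_2$ there as a typo.

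The gradient-of-cutoff term is routine. Since $|\zeta(x,t)-\zeta(y,t)|\le \min\{1,|x-y|/(r_2-r_1)\}$ up to a factor $4$, we get
\[
\int_{B_{r_2}}\frac{|\zeta(x)-\zeta(y)|^p}{|x-y|^{d+sp}}\d y\le C\Big(\frac{r_2^{(1-s)p}}{(r_2-r_1)^p}\Big),
\]
by integrating $|x-y|^{p-d-sp}(r_2-r_1)^{-p}$ over $|x-y|\le 2r_2$. Bounding $w_\pm^p(x)\wedge w_\pm^p(y)\le w_\pm^p(x)$ and integrating gives the $r_2^{(1-s)p}(r_2-r_1)^{-p}\iint w_\pm^p$ term.

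The tail term needs the most care. For $x\in\supp\psi\subset B_{(r_1+r_2)/2}$ and $y\notin B_{r_2}$, we have $|y-x_0|\le |y-x|+|x-x_0|$ and $|y-x|\ge (r_2-r_1)/2$, so
\[
\frac{|y-x_0|}{|y-x|}\le 1+\frac{r_2}{|y-x|}\le \frac{4r_2}{r_2-r_1}.
\]
Therefore
\[
\sup_{x\in\supp\zeta}\int_{\R^d\setminus B_{r_2}}\frac{w_\pm^{p-1}(y)}{|x-y|^{d+sp}}\d y\le C\Big(\frac{r_2}{r_2-r_1}\Big)^{d+sp} r_2^{-sp}\tail(w_\pm;B_{r_2})^{p-1},
\]
and $r_2^{d+sp}r_2^{-sp}(r_2-r_1)^{-d-sp}=r_2^d(r_2-r_1)^{-d-sp}$. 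This gives \eqref{e.caccioppoli3}. For \eqref{e.caccioppoli2} the same bound holds, and since $r_2-r_1\le r_2$ one may weaken $(r_2-r_1)^{-d-sp}$ to $r_2^{(1-s)p}(r_2-r_1)^{-d-p}$ to match the stated exponent.

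The delicate point is the nonlocal term $w_\pm\int w_\mp^{p-1}$ on the left. It is positive and is kept only on $Q_{r_1,\tau_1}$, where $\zeta=1$. The kernel bounds \eqref{e.kernel} relate $\sfk$ to $|x-y|^{-d-sp}$, with constants absorbed into $C(\data)$.
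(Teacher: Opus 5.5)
Your proposal is correct and follows the paper's route: the paper proves both estimates by inserting the constant level $k(t)\equiv k$ and a cutoff adapted to $Q_{r_1,\tau_1}\subset Q_{r_2,\tau_2}$ into Proposition~\ref{t.caccioppoli1}, and your cutoff, fractional and tail computations supply the details it omits. The one thing to adjust is how you reconcile with the printed \eqref{e.caccioppoli2}. Your weakening gives $r_2^{d+(1-s)p}(r_2-r_1)^{-(d+p)}$, not the printed $r_2^{d}(r_2-r_1)^{-(d+p)}$; the printed coefficient is not scale invariant and is a misprint for your $r_2^d(r_2-r_1)^{-(d+sp)}$. Likewise, the initial datum should sit on $B_{r_2}$ (your cutoff gives $B_{(r_1+r_2)/2}$) rather than on $B_{r_1}$. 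These corrected forms are exactly what the paper uses later, e.g.\ in Lemma~\ref{Lm:shrinking-FT}.
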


\begin{proof}
The proof is exactly same as~\cite[Appendix B]{Nak23}, selecting an appropriate smooth cutoff function in~\eqref{e.caccioppoli1} and taking $k(t)=k \in \R$.
\end{proof}

\section{Refined Upper Bounds: Proof of Theorem~\ref{t.boundedness-NT}}\label{Sect.3}
We give a two-step proof. First we deduce the qualitative $L^\infty$-$L^\nu$ local boundedness for weak sub-solutions to~\eqref{e.NT} under optimal Tail condition. Our approach is based on~\cite{CN26}. What is new in our approach is that we are able to prove these estimates without using strict positivity information about the solution; see~\cite[Theorem 2.15]{BGK22} and~\cite[Theorems 1.1 and 1.2]{Nak22a} for more details.

\begin{proposition}[Qualitative local boundedness]\label{t.qualitative-bnd-NT}
For $p \in (1,\infty)$ and $s \in (0,1)$, let $u$ be a weak sub-solution to~\eqref{e.NT}-\eqref{e.kernel} in the sense of Definition~\ref{def-of-NT}. Suppose that $u \in L^{p-1+\eps}_{\mathrm{loc}}(\Omega_T)$ for $\eps \in (0,\infty)$. Then, $u$ is locally bounded in $\Omega_T$. More precisely, there exists a constant $C(\data, \eps)>0$ such that on every concentric cylinders $\sigma^\prime Q_{R,S}(z_0) \subset \sigma Q_{R,S}(z_0) \subset \Omega_T$ for $z_0=(x_0,t_0) \in \Omega_T$ and $0<\sigma^\prime < \sigma \leq 1$, we have
\begin{align*}
\sup_{\sigma^\prime Q_{R,S}(z_0)} u_+ &\leq \frac{C}{(\sigma-\sigma^\prime)^{\frac{d+p}{(\kappa-1)(p-1+\eps)}}}  \left[\left(\frac{S}{R^{sp}}+1 \right)+\frac{S}{R^{sp+\frac{d(p-1)}{p-1+\eps}}} \widetilde{\textbf{\textsf{Tail}}}\right]^{\frac{\kappa}{(\kappa-1)(p-1+\eps)}} \\[4pt]
& \quad \quad \quad \times \left(\biint_{\sigma Q_{R,S}(z_0)}\left[1+u_+^{p-1+\eps} \right]\dxt \right)^{\nicefrac{1}{(p-1+\eps)}},
\end{align*}
where $\kappa:=1+\frac{\kappa_\ast-1}{\kappa_\ast}$ with $\kappa_\ast$ being the Sobolev embedding exponent as in Proposition~\ref{FS}, and to lighten the notation we denoted $\sigma Q_{R,S}(z_0):=Q_{\sigma R, \sigma S}(z_0)$ and
\[
\widetilde{\textbf{\textsf{Tail}}}:=\left(\dashint_{t_0-\sigma S}^{t_0} \tail \big(u_+(t)\,; B_{\sigma^\prime R}(x_0) \big)^{p-1+\eps}\d{t}\right)^{\frac{p-1}{p-1+\eps}}.
\]
\end{proposition}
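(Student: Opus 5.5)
We run a Moser iteration on powers of $u_+$ (more precisely of $u_+\vee 1$, or of $u_++1$), combined with the parabolic Sobolev inequality of Proposition~\ref{FS}. The main tool is the Caccioppoli-type energy estimate obtained by testing the weak formulation with $\phi=\zeta^p\,(\bar u^{\beta}-1)$, where $\bar u:=u_+\vee 1$ (truncated at a large level $M$ for admissibility) and $\beta>0$.

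\textbf{Step 1: reverse-H\"older energy inequality.} Test~\eqref{e.weak-formulation} with this $\phi$, using exponential mollification in time as in the proof of Proposition~\ref{t.caccioppoli1}.

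The time term produces $\partial_t G(\bar u)$ with
\[
G(\bar u)=(p-1)\int_1^{\bar u}\tau^{p-2}(\tau^{\beta}-1)\d\tau\approx \frac{p-1}{p-1+\beta}\,\bar u^{\,p-1+\beta}.
\]
The constants here are controlled via Lemma~\ref{t.useful}. The fact that $\bar u\geq 1$ lets us absorb the lower-order piece.

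The diffusion term gives, up to constants polynomial in $\beta$, the Gagliardo seminorm of $\zeta\,\bar u^{(p-1+\beta)/p}$. Against it we get the cutoff error $\iint \bar u^{p-1+\beta}|\zeta(x)-\zeta(y)|^p|x-y|^{-d-sp}$, bounded by $(r_2-r_1)^{-p}r_2^{(1-s)p}\iint_{Q}\bar u^{p-1+\beta}$. We also get a tail contribution
\[
\iint \zeta^p \bar u^{\beta}\,\sup_{x}\int_{\R^d\setminus B}\frac{u_+^{p-1}(y,t)}{|x-y|^{d+sp}}\d y\,\d t
\le \frac{C r_2^{d}}{(r_2-r_1)^{d+sp}}\int \tail\big(u_+(t);B_{\sigma'R}\big)^{p-1}R^{-sp}\Big(\int_{B}\bar u^{\beta}\d x\Big)\d t .
\]
Combining these gives, for $\gamma:=p-1+\beta$, a bound on $\sup_t\int\zeta^p\bar u^{\gamma}$ plus $\int[\zeta\bar u^{\gamma/p}]^p_{W^{s,p}}$.

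\textbf{Step 2: handling the tail term (the main obstacle).} The tail is only in $L^{(p-1+\eps)/(p-1)}$ in time, not in $L^\infty$. We therefore apply H\"older in time with exponents $\frac{p-1+\eps}{p-1}$ and $\frac{p-1+\eps}{\eps}$ to the tail term. Since $\bar u^{\beta}\le \bar u^{\gamma}$ (because $\bar u\ge 1$), this leaves $\big(\int(\int_B\bar u^{\gamma})^{(p-1+\eps)/\eps}dt\big)^{\eps/(p-1+\eps)}$. That quantity is a mixed space-time norm that must itself be controlled by the iteration.

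Our plan is to run the iteration in the mixed norms of Lemma~\ref{t.GN}, or equivalently to interpolate between $L^\infty_tL^{1}_x$ and the space-time gain. Proposition~\ref{FS} gives a gain by the factor $\kappa$ in integrability. Losing a factor $\frac{p-1+\eps}{\eps}$ in time integrability must still leave a net gain, and this is exactly why the exponents in the statement involve $\kappa/((\kappa-1)(p-1+\eps))$.

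If this becomes too delicate, we fall back on a cruder route: bound $\int_B\bar u^{\beta}\le |B|^{\eps/\gamma}(\int_B\bar u^{\gamma})^{\beta/\gamma}$, then use Young's inequality so that the tail appears only through the prefactor $\frac{S}{R^{sp+d(p-1)/(p-1+\eps)}}\widetilde{\textbf{\textsf{Tail}}}$.

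\textbf{Step 3: iteration.} Combining Steps 1--2 with Proposition~\ref{FS} applied to $w=\zeta\bar u^{\gamma/p}$ yields
\[
\Big(\biint_{Q_{i+1}}\bar u^{\kappa\gamma_i}\Big)\le \Big(C\,b^{i}\,\beta_i^{C}\,\mathcal{K}\Big)\Big(\biint_{Q_i}\bar u^{\gamma_i}\Big)^{\kappa}.
\]
Here $\mathcal{K}:=\big(\frac{S}{R^{sp}}+1\big)+\frac{S}{R^{sp+d(p-1)/(p-1+\eps)}}\widetilde{\textbf{\textsf{Tail}}}$, and $Q_i$ shrinks from $\sigma Q_{R,S}$ to $\sigma'Q_{R,S}$ with radii gaps $2^{-i}(\sigma-\sigma')$.

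We start from $\gamma_0=p-1+\eps$ (that is, $\beta_0=\eps$, which is why $u\in L^{p-1+\eps}_{\mathrm{loc}}$ is needed) and set $\gamma_{i+1}=\kappa\gamma_i$. Taking $\gamma_i$-th roots and iterating, the products $\prod(\cdot)^{1/(\kappa^{i}\gamma_0)}$ converge. They give the exponent $\frac{\kappa}{(\kappa-1)(p-1+\eps)}$ on $\mathcal{K}$ and $\frac{d+p}{(\kappa-1)(p-1+\eps)}$ on $(\sigma-\sigma')^{-1}$. Letting $i\to\infty$ and then the truncation $M\to\infty$ gives the claimed $\sup$ bound with $1+u_+^{p-1+\eps}$ on the right.
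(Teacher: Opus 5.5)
\textbf{There is a genuine gap in Step~2, and hence in the recursion you state in Step~3.}

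After H\"older in time and the bound $\bar u^{\beta}\le\bar u^{\gamma}$, you are left with $\Psi(t):=\int_B\zeta^p\bar u^{\gamma}(x,t)\d x$ measured in $L^{q}_t$, where $q=\frac{p-1+\eps}{\eps}>1$. This is not controlled by $\iint\bar u^{\gamma}$, nor by anything else on your right-hand side, and the mixed-norm plan is only announced, never carried out.

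The natural way to close it is $\|\Psi\|_{L^q_t}\le\|\Psi\|_{L^\infty_t}^{1-1/q}\|\Psi\|_{L^1_t}^{1/q}$, followed by Young's inequality to absorb $\|\Psi\|_{L^\infty_t}$ into the left-hand side. The iteration then does close, with constants polynomial in $\gamma$. However, the tail then enters essentially as $\big(\frac{S}{R^{sp}}\widetilde{\textbf{\textsf{Tail}}}\big)^{\frac{p-1+\eps}{\eps}}$: not linearly, and without the factor $R^{-\frac{d(p-1)}{p-1+\eps}}$. This proves local boundedness but not the displayed estimate. So the level-independent $\mathcal K$ in your Step~3, linear in $\widetilde{\textbf{\textsf{Tail}}}$, is asserted rather than derived.

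Your fallback route fails outright, for two reasons.
\begin{itemize}
\item The H\"older factor is $|B|^{(p-1)/\gamma}$, not $|B|^{\eps/\gamma}$.
\item You would absorb $\big(\sup_t\int_B\bar u^{\gamma}\big)^{\beta/\gamma}$ by Young into a left-hand side whose coefficient is only of order $\gamma^{-1}$. This leaves a remainder of order $\gamma^{\beta/(p-1)}\mathcal M_i^{\gamma/(p-1)}|B|$, where $\mathcal M_i$ is the geometrically growing tail prefactor. Its $\gamma$-th root does not decay along the iteration, so the Moser product diverges.
\end{itemize}

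Your explanation of the exponent is also wrong. $\frac{\kappa}{(\kappa-1)(p-1+\eps)}$ has nothing to do with lost time integrability. It equals $\frac{\kappa}{\gamma_0}\sum_{i\ge1}\kappa^{-i}$ and comes from a factor $\mathcal K^{\kappa}$ per step, since Proposition~\ref{FS} produces $\mathcal K^{1+\frac{\kappa_\ast-1}{\kappa_\ast}}$. The $\mathcal K^{1}$ in your displayed recursion would give $\frac{1}{(\kappa-1)(p-1+\eps)}$ instead.

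\textbf{How the paper avoids the mismatch at the base level.} It chooses the exponent so that no interpolation is needed. It integrates the level-set inequality \eqref{e.caccioppoli3}$_+$ against $k^{\alpha}\,\mathrm{d}k$ and takes $\alpha=\eps-2$, which is your $\beta=\eps$. This is the unique choice for which H\"older jointly in space--time returns exactly $\big(\iint u_+^{p-1+\eps}\big)^{\frac{\eps}{p-1+\eps}}$. Concretely, $\tail^{p-1}\in L^{\frac{p-1+\eps}{p-1}}_t$ is paired with $u_+^{\alpha+2}$ in $L^{\frac{p-1+\eps}{\eps}}_{x,t}$, using Jensen in $x$ rather than $\bar u^{\beta}\le\bar u^{\gamma}$. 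This is precisely where the linear $\widetilde{\textbf{\textsf{Tail}}}$ and the factor $R^{-\frac{d(p-1)}{p-1+\eps}}$ in the statement come from.

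The price is $\alpha>-1$, i.e.\ $\eps>1$. Hence the paper needs its extra absorption Step~4 (Giusti's lemma) for $\eps\in(0,1]$, which your direct test function with $\beta_0=\eps$ would not need.

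Note, however, that this matching holds only at the base exponent $\chi_0=p$. For $\beta>\eps$ one has $\frac{p-1+\eps}{\eps}\beta>p-1+\beta$, and the paper's Step~3 re-applies \eqref{e.quali-bnd-5-NT} at every $\chi_i=\kappa^i p$ without revisiting this. So your instinct that the tail term is the crux is correct. But as written, your proposal neither closes it nor produces the stated form of the bound.
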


\begin{remark}
The proof of Proposition~\ref{t.qualitative-bnd-NT} is based on Moser's iteration presented in~\cite[Proposition 3.1]{CN26}. Our approach can remove the positivity of solutions to~\eqref{e.NT}. See also~\cite[Section 5]{BGK22} or~\cite[Section 4]{Nak22b} for more thorough discussion.
\end{remark}

\begin{proof}
We will follow the argument given in~\cite[Section 3.1]{CN26}. Without loss of generality, we let $z_0=(x_0,t_0)=(0,0)$. 
\smallskip

\emph{Step 1.} We start from~\eqref{e.caccioppoli3}$_+$ with $r_1=(R+\rho)/2$, $r_2=R$, $\tau_1=\tau$ and $\tau_2=S$. After multiplying it by $k^\alpha$ for $\alpha>-1$, we integrate the resulting inequality over $k \in [0,\infty)$ to obtain
\begin{align}\label{e.quali-bnd-1-NT}
\int_0^\infty &k^\alpha\int_{B_{\frac{R+\rho}{2}}}\mathfrak{g}_+(u(t),k)\dxk\notag \\[4pt]
&\leq \frac{C}{S-\tau}\int_0^\infty k^\alpha \iint_{Q_{R,S}}\mathfrak{g}_+(u,k)\dxtk \notag\\[4pt]
&\quad \quad+ C\frac{2^pR^{(1-s)p}}{(R-\rho)^p}\int_0^\infty k^\alpha \iint_{Q_{R,S}}(u-k)_+^p\dxtk \notag \\[4pt]
&\quad \quad +C\frac{2^{d+sp}R^{d}}{(R-\rho)^{d+sp}}\int_0^\infty k^\alpha \iint_{Q_{R,S}}(u-k)_+\d{x} \cdot \tail \big((u-k)_+(t)\,; B_{R} \big)^{p-1}\dtk \notag \\[4pt]
&=:\sfI+\sfI\sfI+\sfI\sfI\sfI.
\end{align}
Appealing to Fubini's theorem and H\"{o}lder's inequality after a change of variables, we straightforwardly estimate that
\begin{align*}
\sfI \sfI &\leq \frac{C}{\alpha+p+1} \frac{R^{(1-s)p}}{(R-\rho)^p} \iint_{Q_{R,S}} u_+^{\alpha+p+1}\dxt,\\[4pt]
\sfI\sfI \sfI&\leq \frac{C}{\alpha+p+1} \frac{R^d}{(R-\rho)^{d+sp}}\left(\int_{-S}^0 \tail \big(u_+(t) ; B_R \big)^{p-1+\eps}\d{t}\right)^{\frac{p-1}{p-1+\eps}}\notag\\
&\quad \quad \quad \quad \quad \quad \quad \quad  \quad \quad \times \left(\iint_{Q_{R,S}}u_+^{(\alpha+2)\frac{p-1+\eps}{\eps}}\dxt\right)^{\frac{\eps}{p-1+\eps}}.
\end{align*}
We now turn our attention to $\sfI$. For this, we need to consider two cases: $p \geq 2$ and $1<p<2$. In the first case $p \geq 2$, Lemma~\ref{t.g}, using Lemma~\ref{t.useful} with $p=2$, implies that
\begin{align*}
\sfI &\leq \frac{C}{S-\tau}\int_0^\infty k^\alpha \iint_{Q_{R,S}}\Big(|u|+|k|\Big)^{p-2}(u-k)_+^2\dxtk \\
&\leq \frac{C}{S-\tau}\left(\iint_{Q_{R,S}}u_+^{\alpha+p+1}\dxt \right)\left(\int_0^1\lambda^\alpha (1+\lambda)^{p-2}(1-\lambda)^2\d{\lambda}\right)\\
&\leq \frac{C}{S-\tau}\left(\iint_{Q_{R,S}}u_+^{\alpha+p+1}\dxt\right) \cdot \frac{2}{\alpha+3}.
\end{align*}
In the latter case, since $|u|+|k| \geq (u-k)_+$, using Lemma~\ref{t.useful} again, we have
\begin{align*}
\sfI &\leq \frac{C}{S-\tau} \int_0^\infty k^\alpha \iint_{Q_{R,S}}(u-k)_+^p\dxtk\\
&\leq \frac{C}{S-\tau}\left(\iint_{Q_{R,S}}u_+^{\alpha+p+1}\dxt\right) \left(\int_0^1\lambda^\alpha (1-\lambda)^p\d{\lambda}\right)\\
&\leq \frac{C}{S-\tau}\left(\iint_{Q_{R,S}}u_+^{\alpha+p+1}\dxt\right) \cdot \frac{2}{\alpha+p+1}.
\end{align*}
Altogether, we have shown that
\[
\sfI \leq \frac{1}{\alpha+(p\vee 2)+1}\cdot \frac{C}{S-\tau}\left(\iint_{Q_{R,S}}u_+^{\alpha+p+1}\dxt\right).
\]

Next, we handle the left side of~\eqref{e.quali-bnd-1-NT}. Using Lemmas~\ref{t.g} and~\ref{t.useful} again, we deduce that, for every $t \in (-S,0)$, 
\[
\int_0^\infty k^\alpha\int_{B_{\frac{R+\rho}{2}}}\mathfrak{g}_+(u(t),k)\dxk \geq \frac{C}{(\alpha+( p \wedge 2)+1)^{(p \wedge 2)+2}}\int_{B_{\frac{R+\rho}{2}}}u_+^{\alpha+p+1}(t)\d{x}.
\]
Combining this with the estimates of $\sfI$--$\sfI\sfI\sfI$ and rearranging, we obtain
\begin{align}\label{e.quali-bnd-2-NT}
\sup_{t \in (-S, \,0)}&\,\dashint_{B_{\frac{R+\rho}{2}}}u_+^{\alpha+p+1}(t)\d{x}  \notag \\[4pt]
&\leq C\frac{(\alpha+( p \wedge 2)+1)^{(p \wedge 2)+2}}{\alpha+(p \vee 2)+1}\left\{\left[\frac{R^{(1-s)p}}{(R-\rho)^p} +\frac{1}{S-\tau}\right] \frac{|Q_{R,S}|}{|B_{\frac{R+\rho}{2}}|}\biint_{Q_{R,S}} u_+^{\alpha+p+1}\dxt \right.\notag\\[4pt]
& \quad \quad \quad \quad\left.+\frac{R^{d}}{(R-\rho)^{d+sp}} \frac{|Q_{R,S}|^{\frac{\eps}{p-1+\eps}}}{|B_{\frac{R+\rho}{2}}|}\left(\int_{-S}^0 \tail \big(u_+(t)\,; B_{R} \big)^{p-1+\eps}\d{t}\right)^{\frac{p-1}{p-1+\eps}} \right. \notag\\[4pt]
&\quad \quad \quad \quad \quad \quad \quad \quad \quad  \left.\times \left(\biint_{Q_{R,S}}u_+^{(\alpha+2)\frac{p-1+\eps}{\eps}}\dxt\right)^{\frac{\eps}{p-1+\eps}} \right\}\notag\\[4pt]
&\leq C\frac{(\alpha+( p \wedge 2)+1)^{(p \wedge 2)+2}}{\alpha+(p \vee 2)+1}\left\{\left[\frac{R^{(1-s)p}}{(R-\rho)^p} +\frac{1}{S-\tau}\right] \frac{|Q_{R,S}|}{|B_{\frac{R+\rho}{2}}|}\biint_{Q_{R,S}} u_+^{\alpha+p+1}\dxt \right.\notag\\[4pt]
& \quad \quad \quad \quad\left.+\frac{R^{d}}{(R-\rho)^{d+sp}} \frac{|Q_{R,S}|^{\frac{\eps}{p-1+\eps}}}{|B_{\frac{R+\rho}{2}}|}\left(\int_{-S}^0 \tail \big(u_+(t)\,; B_{R} \big)^{p-1+\eps}\d{t}\right)^{\frac{p-1}{p-1+\eps}} \right. \notag\\[4pt]
&\quad \quad \quad \quad \quad \quad \quad \quad \quad  \left.\times \left(\biint_{Q_{R,S}}\left[1+u_+^{(\alpha+2)\frac{p-1+\eps}{\eps}} \right] \dxt \right) \right\}
\end{align}
for a constant $C(\data)<\infty$, where the subsequent inequality was used to get the last line
\[
\left(\biint_{Q_{R,S}}u_+^{(\alpha+2)\frac{p-1+\eps}{\eps}}\dxt \right)^{\frac{\eps}{p-1+\eps}} \leq \biint_{Q_{R,S}}\left[ 1+u_+^{(\alpha+2)\frac{p-1+\eps}{\eps}} \right] \dxt.
\]
Similarly, for $\eps >1$, we select $\alpha \in (-1,\infty)$ which satisfies
\[
\alpha+p+1=(\alpha+2)\frac{p-1+\eps}{\eps} \quad \iff \quad \alpha=\eps-2,
\]
and hence
\begin{align}\label{e.quali-bnd-3-NT}
\sup_{t \in (-S, \,0)} &\,\dashint_{B_{\frac{R+\rho}{2}}}u_+^{p-1+\eps}(t)\d{x}  \notag \\[4pt]
&\leq C\Gamma_{p,\eps}\left\{\left[\frac{R^{(1-s)p}}{(R-\rho)^p} +\frac{1}{S-\tau}\right] \frac{|Q_{R,S}|}{|B_{\frac{R+\rho}{2}}|}+\frac{R^{d}}{(R-\rho)^{d+sp}} \frac{|Q_{R,S}|^{\frac{\eps}{p-1+\eps}}}{|B_{\frac{R+\rho}{2}}|}\sfT_{R,S}\right\}  \notag\\[4pt]
&\quad \quad \quad \quad \quad \quad \quad \quad \quad  \times \left(\biint_{Q_{R,S}} \left[1+u_+^{p-1+\eps} \right]\dxt\right),
\end{align}
where we denoted
\[
\Gamma_{p,\eps}:=\frac{\left(( p \wedge 2)+\eps-1\right)^{(p \wedge 2)+2}}{(p \vee 2)+\eps-1}\quad \mbox{and}\quad \sfT_{R,S}:=\left(\int_{-S}^0 \tail \big(u_+(t)\,; B_{R} \big)^{p-1+\eps}\d{t}\right)^{\frac{p-1}{p-1+\eps}}
\]
to lighten the notation. 
\smallskip

\emph{Step 2.} Once we have~\eqref{e.quali-bnd-3-NT}, a reverse H\"{o}lder type inequality follows in a straightforward way. Indeed, we denote $w:=u_+^{\nicefrac{(p-1+\eps)}{p}}$ and 
\[
\widetilde{Q}:=Q_{\frac{R+\rho}{2}, \frac{S+\tau}{2}}=\widetilde{B}\times \widetilde{T}, \quad \widetilde{B}:=B_{\frac{R+\rho}{2}}, \quad \widetilde{T}:=\left(-\tfrac{S+\tau}{2}, 0\right).
\]
Let $\varphi$ be a cutoff function in $Q_{R,S}$ satisfying
\[
\1_{Q_{\rho,\tau}} \leq \varphi \leq \1_{\widetilde{Q}}, \quad |\nabla \varphi| \lesssim 1/(R-\rho), \quad \supp \varphi \subset \widetilde{Q}.
\]
Appealing to Proposition~\ref{FS} with $\mathsf{d}=\nicefrac{(R-\rho)}{2R} \in (0,1)$ we obtain that, for $\kappa=1+\frac{\kappa_\ast-1}{\kappa_\ast}$, 
\begin{align}\label{e.quali-bnd-4-NT}
\biint_{Q_{\rho, \tau}}&w^{\kappa p}\dxt \leq \frac{1}{|Q_{\rho, \tau}|}\iint_{\widetilde{Q}}(\varphi w)^{\kappa p}\dxt \notag\\[4pt]
&\leq \frac{C}{|Q_{\rho, \tau}|}\left[ \left(\frac{R+\rho}{2}\right)^{sp} \int_{\widetilde{T}} \iint \nolimits_{\widetilde{B} \times \widetilde{B}} \frac{|\varphi w(x,t)-\varphi w(y,t)|^p}{|x-y|^{d+sp}}\dxyt \right.\notag\\
&\quad \quad \quad \quad \left.+\left(\frac{2R}{R-\rho}\right)^{d+sp}\iint_{\widetilde{Q}}(\varphi w)^p\dxt\right] \left[\sup_{t \in \widetilde{T}} \dashint_{\widetilde{B}}(\varphi w)^p(t)\d{x} \right]^{\frac{\kappa_\ast-1}{\kappa_\ast}}\notag\\[4pt]
&\leq C\frac{|Q_{R,S}|}{|Q_{\rho, \tau}|} \left(\frac{2R}{R-\rho}\right)^{d+sp} \left[\biint_{Q_{R,S}} w^p\dxt+\left(\sup_{t \in (-S,0)} \dashint_{B_{\frac{R+\rho}{2}}}w^p(t)\d{x} \right)\right]^\kappa,
\end{align}
where we used
\[
\int_{\widetilde{B}} \frac{\d{y}}{|x-y|^{d+sp}} \leq C(d,s,p) \left(\frac{R+\rho}{2}\right)^{-sp}, \quad \forall x \in \widetilde{B}\equiv B_{\frac{R+\rho}{2}}.
\]
The combination of~\eqref{e.quali-bnd-3-NT} and~\eqref{e.quali-bnd-4-NT}  yields that
\begin{align}\label{e.quali-bnd-5-NT}
\biint_{Q_{\rho, \tau}}&w^{\kappa p}\dxt \leq C\Gamma_{p,\eps}^{(p+1)\kappa} \frac{|Q_{R,S}|}{|Q_{\rho, \tau}|} \left(\frac{2R}{R-\rho}\right)^{d+sp} \notag \\[4pt]
&\times \left\{\left[\frac{R^{(1-s)p}}{(R-\rho)^p} +\frac{1}{S-\tau}\right] \frac{|Q_{R,S}|}{|B_{\frac{R+\rho}{2}}|}+\frac{R^{d}}{(R-\rho)^{d+sp}} \frac{|Q_{R,S}|^{\frac{\eps}{p-1+\eps}}}{|B_{\frac{R+\rho}{2}}|}\sfT_{R,S}\right\}^\kappa  \notag\\[4pt]
&\quad \quad \quad \quad \quad \quad \quad \quad \quad  \times \left(\biint_{Q_{R,S}} \big[1+w^{p} \big]\dxt\right)^\kappa.
\end{align}
\smallskip

\emph{Step 3.} We demonstrate a Moser-type iteration using~\eqref{e.quali-bnd-5-NT}. Fix parameters $0<\sigma^\prime <\sigma \leq 1$ and $i \in \N_0$. Define then  
\[
\begin{cases}
\rho_i:=\sigma^\prime R +\dfrac{\sigma-\sigma^\prime}{2^{i}}R, \quad \tau_i:=\sigma^\prime S +\dfrac{\sigma-\sigma^\prime}{2^{i}}S,\\[8pt]
\widetilde{\rho}_i:=\dfrac{1}{2}(\rho_i+\rho_{i+1}), \quad \widetilde{\tau}_i:=\dfrac{1}{2}(\tau_i+\tau_{i+1}),\\[4pt]
B_i:=B_{\rho_i}, \quad T_i:=(-\tau_i,0], \quad Q_i:=B_i \times T_i, \\[4pt]
\widetilde{B}_i:=B_{\widetilde{\rho}_i}, \quad \widetilde{T}_i:=(-\widetilde{\tau}_i,0], \quad \widetilde{Q}_i:=\widetilde{B}_i \times \widetilde{T}_i.
\end{cases}
\]
Observe, by construction, that
\[
\sigma Q_{R, S}=Q_0 \supset \cdots \supset Q_i \supset Q_{i+1} \supset \cdots \supset Q_\infty=\sigma^\prime Q_{R, S}, \quad \forall i \in \N_0.
\]
Analogously, we define $\{\chi_i\}_{i \in \N_0}$ via $\chi_i:=\kappa^i p$ and then, recursively, for $i \in \N_0$,
\[
\sfW_i:=\left(\biint_{Q_i}\big[1+w^{\chi_i}\big]\dxt \right)^{\nicefrac{1}{\chi_i}} \quad \mbox{and} \quad \sfT_i:=\sfT_{\rho_i, \tau_i}.
\]
Note that $\chi_i \to \infty$ as $i \to \infty$. By an easy exercise, it can be checked that, for every $i \in \N_0$,
\begin{align*}
\frac{|Q_{i}|}{|Q_{i+1}|} \left(\frac{2\rho_i}{\rho_i-\rho_{i+1}}\right)^{d+sp} &\leq 2^{d+1} \left(\frac{2^{i+2}}{\sigma-\sigma^\prime}\right)^{d+sp},\\[4pt]
\left[\frac{\rho_i^{(1-s)p}}{(\rho_i-\rho_{i+1})^p} +\frac{1}{\tau_i-\tau_{i+1}}\right] \frac{|Q_{i}|}{|\widetilde{B}_{i}|} &\leq 2^d\left(\frac{2^{i+1}}{\sigma-\sigma^\prime}\right)^{p}\left(\frac{S}{R^{sp}}+1 \right),\\[4pt]
\frac{\rho_i^{d}}{(\rho_i-\rho_{i+1})^{d+sp}} \frac{|Q_{i}|^{\frac{\eps}{p-1+\eps}}}{|\widetilde{B}_{i}|} \sfT_i&\leq C(d,p,\eps) \frac{4^{i(d+sp)}}{(\sigma-\sigma^\prime)^d}\frac{S}{R^{sp+\frac{d(p-1)}{p-1+\eps}}}\overline{\sfT},
\end{align*}
where
\[
\overline{\sfT}:=\left(\dashint_{-\sigma S}^0 \tail \big(u_+(t)\,; B_{\sigma^\prime R} \big)^{p-1+\eps}\d{t}\right)^{\frac{p-1}{p-1+\eps}}
\]
Thus, appealing to~\eqref{e.quali-bnd-5-NT} over $Q_{i+1}$ and $Q_i$ and rearranging give
\begin{align*}
\biint_{Q_{i+1}}w^{\chi_{i+1}}\dxt &\leq C\Gamma_{p,\eps}^{(p+1)\kappa} \frac{4^{i\left[(d+sp)+(d+p)\kappa \right]}}{(\sigma-\sigma^\prime)^{d+p}}  \\[4pt]
&\quad \times \left[\left(\frac{S}{R^{sp}}+1 \right)+\frac{S}{R^{sp+\frac{d(p-1)}{p-1+\eps}}} \overline{\sfT}\right]^\kappa \left(\biint_{Q_{i}} \left[1+w^{\chi_i} \right]\dxt\right)^\kappa
\end{align*}
and therefore, adding $1$ to the left side implies that the recursive inequality
\begin{equation}\label{e.quali-bnd-6-NT}
\sfW_{i+1}^{\chi_{i+1}} \leq \frac{C}{(\sigma-\sigma^\prime)^{d+p}}\Big[\Gamma_{p,\eps}^{p+1}\sfb^{i} \sfK \sfW_i^{\chi_i}\Big]^\kappa,
\end{equation}
where we have set for notational convenience
\[
\sfb:=4^{(d+sp)+(d+p)}>1 \quad \mbox{and} \quad \sfK:=\left(\frac{S}{R^{sp}}+1 \right)+\frac{S}{R^{sp+\frac{d(p-1)}{p-1+\eps}}}  \overline{\sfT}>1.
\]
Thus, we iterate~\eqref{e.quali-bnd-6-NT} for $i=0,1,\ldots, k$ to obtain that
\begin{align}\label{e.quali-bnd-7-NT}
\left(\biint_{Q_k}w^{\chi_k}\dxt \right)^{\nicefrac{1}{\chi_k}} \leq \sfW_k &\leq \left[\frac{C}{(\sigma-\sigma^\prime)^{d+p}}\right]^{\nicefrac{1}{\chi_k}}\Big[\Gamma_{p,\eps}^{p+1}\sfb^{k-1} \sfK \sfW_{k-1}^{\chi_{k-1}}\Big]^{\nicefrac{\kappa}{\chi_k}} \notag\\
& \,\, \,\vdots \notag\\
&\leq \left[\frac{C}{(\sigma-\sigma^\prime)^{d+p}}\right]^{\mathsf{S}(k)}\Gamma_{p,\eps}^{(p+1)\kappa \mathsf{S}(k)}\sfb^{\mathsf{T}(k)} \sfK^{\kappa \mathsf{S}(k)}\left(\sfW_0^{\chi_0}\right)^{\nicefrac{\kappa^k}{\chi_k}},
\end{align}
where
\[
\mathsf{S}(k):=\sum_{i=0}^{k-1}\frac{\kappa^i}{\chi_k} \quad \mbox{and}\quad \mathsf{T}(k):=\sum_{i=0}^{k-1}i \frac{\kappa^{k-i}}{\chi_k}.
\]
Observe that
\[
\lim_{k \to \infty}\mathsf{S}(k)=\frac{1}{p(\kappa-1)} \quad \mbox{and} \quad \lim_{k \to \infty}\mathsf{T}(k)=\frac{\kappa}{p(\kappa-1)^2}.
\]
Thus, sending $k \to \infty$ in~\eqref{e.quali-bnd-7-NT} yields
\[
\sup_{Q_\infty} w =\limsup_{k \to \infty} \left(\biint_{Q_k}w^{\chi_k}\dxt \right)^{\nicefrac{1}{\chi_k}} \leq \frac{C_{\mathrm{prod}}}{(\sigma-\sigma^\prime)^{\frac{d+p}{p(\kappa-1)}}}\sfK^{\frac{\kappa}{p(\kappa-1)}}\sfW_0
\]
with $C_{\mathrm{prod}}:=C^{\frac{1}{p(\kappa-1)} }\Gamma_{p,\eps}^{\frac{(p+1)\kappa}{p(\kappa-1)}}\sfb^{\frac{\kappa}{p(\kappa-1)^2}}$, namely that, for $\eps \in (1,\infty)$, 
\begin{equation}\label{e.quali-bnd-8-NT}
\sup_{\sigma^\prime Q_{R, S}} u_+ \leq \frac{C_{\mathrm{prod}}^{\frac{p}{p-1+\eps}}}{(\sigma-\sigma^\prime)^{\frac{d+p}{(\kappa-1)(p-1+\eps)}}} \sfK^{\frac{\kappa}{(\kappa-1)(p-1+\eps)}} \left(\biint_{\sigma Q_{R,S}}\big[1+u_+^{p-1+\eps} \big]\dxt \right)^{\nicefrac{1}{(p-1+\eps)}}.
\end{equation}
\smallskip

\emph{Step 4.}\quad In this final step, we verify the estimate~\eqref{e.quali-bnd-8-NT} holds true for all $\eps \in (0,\infty)$. Let $\sigma_1,\sigma_2$ be such that $0<\sigma ^\prime \leq \sigma_1 <\sigma_2 \leq \sigma  <1$. Fixing $\eps \in (1,\infty)$ which satisfies~\eqref{e.quali-bnd-8-NT}, take $\eps^\prime \in (0,1]$ arbitrarily. By~\eqref{e.quali-bnd-8-NT} and Young's inequality we estimate that
\begin{align*}
\sup_{\sigma_1 Q_{R,S}} u_+ &\stackrel{\eqref{e.quali-bnd-8-NT}}{\leq} \frac{C_{\mathrm{prod}}^{\frac{p}{p-1+\eps}}}{(\sigma_2-\sigma_1)^{\frac{d+p}{(\kappa-1)(p-1+\eps)}}} \sfK^{\frac{\kappa}{(\kappa-1)(p-1+\eps)}} \left(\biint_{\sigma_2 Q_{R,S}}\big[1+u_+ \big]^{p-1+\eps}\dxt \right)^{\nicefrac{1}{(p-1+\eps)}}\\
&\leq  
\left[\frac{C_{\mathrm{prod}}^p\sfK^{\frac{\kappa}{\kappa-1}}}{(\sigma_2-\sigma_1)^{\frac{d+p}{\kappa-1}}}\biint_{\sigma_2 Q_{R,S}}\big[1+u_+ \big]^{p-1+\eps^\prime}\dxt \right]^{\nicefrac{1}{(p-1+\eps)}} \left(1+\sup_{\sigma_2 Q_{R,S}}u_+\right)^{\nicefrac{(\eps-\eps^\prime)}{(p-1+\eps)}} \\
&\leq \frac{1}{2}\sup_{\sigma_2 Q_{R,S}}u_+ + C\left[\frac{C_{\mathrm{prod}}^p\sfK^{\frac{\kappa}{\kappa-1}}}{(\sigma_2-\sigma_1)^{\frac{d+p}{\kappa-1}}}\biint_{\sigma_2 Q_{R,S}}\big[1+u_+ \big]^{p-1+\eps^\prime}\dxt \right]^{\nicefrac{1}{(p-1+\eps^\prime)}}.
\end{align*}
The desired statement~\eqref{e.quali-bnd-8-NT} for $\eps \in (0,1]$ follows from using a standard iteration argument (see for example~\cite[Lemma 6.1, Page 191]{Giusti}). The proof is complete.
\end{proof}

We are now ready to prove Theorem~\ref{t.boundedness-NT}. 
\begin{proof}[Proof of Theorem~\ref{t.boundedness-NT}]
The proof is now similar to the one of~\cite[Theorem 2.1]{CN26}. There is a slightly annoying complication caused by the fact that
the auxiliary function $\mathfrak{g}_+(u,k)$ is not estimated by $(u-k)_+^p$ directly, however, this difficulty can be handled quite delicately. 
\smallskip

\emph{Step 1.}  Without loss of generality, we may suppose $z_0=(x_0,t_0)=(0,0)$. Fix $\sigma \in (0,1)$ and let $k \in [0,\infty)$ be arbitrary parameter, to be selected. For $i \in \N_0$, we initialize by setting the following seventeen sequences as introduced in~\cite[Section 3.2]{CN26}, 
\[
\begin{cases}
k_i:=(1-2^{-i})k,\\[4pt]
\rho_i:=\sigma \rho +2^{-i}(1-\sigma)\rho, \quad \theta_i:=\sigma \theta+2^{-i}(1-\sigma)\theta,\\[4pt]
\widetilde{\rho}_i:=\frac{1}{2}(\rho_i+\rho_{i+1}), \quad \widehat{\rho}_i:=\frac{3}{4}\rho_i+\frac{1}{4}\rho_{i+1}, \quad \overline{\rho}_i:=\frac{1}{4}\rho_i+\frac{3}{4}\rho_{i+1}, \\[4pt]
\widetilde{\theta}_i:=\frac{1}{2}(\theta_i+\theta_{i+1}), \quad \,\widehat{\theta}_i:=\frac{3}{4}\theta_i+\frac{1}{4}\theta_{i+1}, \quad \,\overline{\theta}_i:=\frac{1}{4}\theta_i+\frac{3}{4}\theta_{i+1}, \\[4pt]
B_i:=B_{\rho_i}, \quad \widetilde{B}_i:=B_{\widetilde{\rho}_i}, \quad \widehat{B}_i:=B_{\widehat{\rho}_i}, \quad \overline{B}_i:=B_{\overline{\rho}_i}, \\[4pt]
Q_i:=B_i \times (-\theta_i,0], \quad \widetilde{Q}_i:=\widetilde{B}_i \times (-\widetilde{\theta}_i,0], \\[4pt]
\widehat{Q}_i:=\widehat{B}_i \times (-\widehat{\theta}_i, 0], \quad \overline{Q}_i:=\overline{B}_i \times (-\overline{\theta}_i, 0]. 
\end{cases}
\]
We take a smooth cutoff function $\zeta : \R^{d+1}  \to [0,1]$ satisfying
\[
\1_{\widetilde{Q}_i} \leq \zeta \leq \1_{\widehat{Q}_i}, \quad |\nabla \zeta| \leq \frac{2^{i+4}}{(1-\sigma)\rho} \quad \mbox{and} \quad |\partial_t\zeta| \leq \frac{2^{i+4}}{(1-\sigma)\theta}.
\]
For $\eps \in (0,\infty)$, assuming $u \in L^{p-1+\eps}_{\mathrm{loc}}(\Omega_T)$ temporarily, set $\mathsf{M}_0:=\sup_{Q_0} u_+$. Note that $\mathsf{M}_0$ is finite by the qualitative local boundedness (Proposition~\ref{t.qualitative-bnd-NT}). We then prepare families of the levels, for $i \in \N_0$, that
\begin{align*}
\mathsf{K}_i&:=\left[\left(\ell(t)+\frac{1}{2^{\frac{(2-p)_+}{p-1}}4}\mathsf{M}_0\right)^{p-1}+k_{i}^{p-1}\right]^{\nicefrac{1}{(p-1)}} \quad \mbox{and} \\[4pt]
\ell(t)&:=\left[\boldsymbol{\Gamma}\int_{-\theta}^t \int_{\R^d \setminus B_R}\frac{u_+^{p-1}(y,\tau)}{|y|^{d+sp}}\dytau + (2-p)_+ \left(\frac{\mathsf{M}_0}{4^{\frac{(2-p)_+}{p-1}}8} \right)^{p-1}\right]^{\nicefrac{1}{(p-1)}}
\end{align*}
with
\[
k_i:=(1-2^{-i})k \quad \mbox{and} \quad \boldsymbol{\Gamma}:=2^{\frac{(2-p)_+^2}{p-1}} 8^{(2-p)_+}(p-1) \left(\frac{R}{R-\rho}\right)^{d+sp}\frac{1}{\chi_p},
\]
where 
\begin{equation}\label{e.bnd-est-0-NT}
\chi_p:=
\begin{cases}
1 \quad &\mbox{if} \quad p \geq 2,\\
\left[1+\frac{2^{\frac{2-p}{p-1}+1}}{(2-p)^{\frac{1}{p-1}}}\right]^{p-2} \quad &\mbox{if} \quad 1<p<2.
\end{cases}
\end{equation}
Here, we temporary enforce that
\[
k \geq \ell(0)+\frac{1}{2^{\frac{(2-p)_+}{p-1}} 4}\mathsf{M}_0 \quad \Longrightarrow \quad k \geq \ell(t)+\frac{1}{2^{\frac{(2-p)_+}{p-1}} 4}\mathsf{M}_0, \quad \forall t \in (-\theta, 0]. 
\]
By construction, it is also apparent that
\begin{align*}
  \left\{
    \begin{array}{l}
      \mathsf{K}_0= \ell(t)+\frac{1}{2^{\frac{(2-p)_+}{p-1}} 4}\mathsf{M}_0\\[4pt]
\mathsf{K}_i < \widetilde{\mathsf{K}}_i <\mathsf{K}_{i+1} \,\uparrow \,\mathsf{K}_\infty=\left[\left(\ell(t)+\dfrac{1}{2^{\frac{(2-p)_+}{p-1}} 4}\mathsf{M}_0\right)^{p-1}+k^{p-1}\right]^{\nicefrac{1}{(p-1)}}, \quad \forall i \in \N_0.
    \end{array}
    \right.
\end{align*}
In the Caccioppoli estimate~\eqref{e.caccioppoli1}, we then select the time-dependent level $k(t)$ so that
\[
k(t)=\widetilde{\mathsf{K}}_i:=\left[\left(\ell(t)+\frac{1}{2^{\frac{(2-p)_+}{p-1}} 4}\mathsf{M}_0\right)^{p-1}+\widetilde{k}_{i}^{p-1}\right]^{\nicefrac{1}{(p-1)}}
\]
and, to keep the notation short, denote
\[
\widetilde{w}_i:= (u-\widetilde{\mathsf{K}}_i)_+ \quad \mbox{and} \quad w_{i}:=(u-\mathsf{K}
_i)_+.
\]
Under these choice of parameters, we obtain from~\eqref{e.caccioppoli1}$_+$ that
\begin{align}\label{e.bnd-est-1-NT}
&\underbrace{\sup_{t \in (-\widetilde{\theta}_i,0]}\int_{\widetilde{B}_i}\mathfrak{g}_+(u,k(t))\d{x}}_{=:\sfS}+\int_{-\widetilde{\theta}_i}^0 \iint_{\widetilde{B}_i \times \widetilde{B}_i} \frac{|\widetilde{w}_i(x,t)-\widetilde{w}_i(y,t)|^p}{|x-y|^{d+sp}}\dxyt 
\notag\\[4pt]
& \quad \quad \leq C\iint_{Q_i}\left|\partial_t \zeta^p\right|\mathfrak{g}_+(u,k(t))\dxt \notag\\[4pt]
&\quad \quad \quad +C\int_{-\theta_i}^{0}\iint_{B_i \times B_i} \min\left\{\widetilde{w}_i^p(x,t), \widetilde{w}_i^p(y,t)\right\}\dfrac{\big|\zeta(x,t)-\zeta(y,t)\big|^p}{|x-y|^{d+sp}}\dxyt \notag\\[4pt]
&\quad \quad \quad +C\iint_{Q_i}\zeta^p\widetilde{w}_i(x,t)\d{x} \left(\sup_{x\,\in\, \widehat{B}_i}\int_{\R^d \setminus B_i}\frac{\widetilde{w}_i^{p-1}(y,t)}{|x-y|^{d+sp}}\d{y}\right)\d{t} \notag\\[4pt]
&\quad \quad  \quad -C\iint_{Q_i}\zeta^pk^\prime(t)\Big(|u|+|k(t)|\Big)^{p-2}\widetilde{w}_i(x,t)\dxt\notag\\[4pt]
&\quad \quad =:  \sfI+\sfI\sfI+\sfI\sfI\sfI+\sfI\sfV
\end{align}
whenever concentric cylinders $\widetilde{Q}_i$ and $Q_i$, where the definitions of $\sfI$--$\sfI\sfV$ are clear from the context. Using Lemma~\ref{t.g}, we crudely bound
\begin{align}\label{e.bnd-est-2-NT}
\sfI &\leq C\frac{2^i}{(1-\sigma)\theta}\iint_{Q_i}\left(|u|+|\widetilde{\mathsf{K}}_i|\right)^{p-2}\left(u-\widetilde{\mathsf{K}}_i\right)_+^2\dxt\notag \\
&\leq C\frac{2^i}{(1-\sigma)\theta}\iint_{Q_i}\left(|u|+|\widetilde{\mathsf{K}}_i|\right)^{p-1}\left(u-\widetilde{\mathsf{K}}_i\right)_+\dxt.
\end{align}
Set $\sfG_p:=\widetilde{\mathsf{K}}_i-\mathsf{K}_i$. By an easy exercise, it can be checked that
\[
\sfG_p \geq \frac{1}{c(p)} \frac{k}{2^{i(2 \vee p)}},
\]
therefore the condition $k \geq \ell(t)+\frac{1}{2^{\frac{(2-p)_+}{p-1}} 4}\mathsf{M}_0$ for all $t$ yields that
\begin{align}\label{e.bnd-est-3-NT}
\widetilde{\mathsf{K}}_i-\mathsf{K}_i=\sfG_p &\geq \frac{1}{c(p)\,2^{i(2 \vee p)}} \left(\frac{k^{p-1}+(\ell(t)+\nicefrac{\mathsf{M}_0}{(2^{\frac{(2-p)_+}{p-1}} 4)})^{p-1}}{2}\right)^{\nicefrac{1}{(p-1)}} \notag\\
&=\frac{\mathsf{K}_\infty}{C(p)\,2^{i(2 \vee p)}}.
\end{align}
Since $0 \leq \mathsf{K}_i < \widetilde{\mathsf{K}}_i \leq \mathsf{K}_\infty$ we observe that, for $u > \widetilde{\mathsf{K}}_i$
\begin{align*}
\left(|u|+|\widetilde{\mathsf{K}}_i|\right)^{p-1}\left(u-\widetilde{\mathsf{K}}_i\right)_+ &\leq 2^{p-1} \left(|u|^p+|\widetilde{\mathsf{K}}_i|^p\right)  \\
&\leq 4^p\left[ (u-\widetilde{\mathsf{K}}_i)^p+\mathsf{K}_\infty^p \right]\\
&\!\!\!\stackrel{\eqref{e.bnd-est-3-NT}}{\leq} C(p)2^{ip(2 \vee p)}\left[ (u-\mathsf{K}_i)^p+(\widetilde{\mathsf{K}}_i-\mathsf{K}_i)^p \right]\\
&\leq C(p)2^{ip(2 \vee p)}(u-\mathsf{K}_i)^p
\end{align*}
for a constant $C(p)<\infty$. Thus, by~\eqref{e.bnd-est-3-NT} a straightforward computation shows that
\begin{equation}\label{e.bnd-est-a-NT}
\sfI \leq C(p)\frac{2^{i[p(2 \vee p)+1]}}{(1-\sigma)\theta} \iint_{Q_i}w_i^p\dxt.
\end{equation}
An easy computation show that, 
\begin{equation}\label{e.bnd-est-b-NT}
\sfI\sfI \leq \frac{c2^{ip}}{(1-\sigma)^p \rho^{sp}}\iint_{Q_i} w_i^p\dxt.
\end{equation}
On the other hand, similarly to the proof of~\cite[Section 3.2]{CN26}, we argue that
\begin{align}\label{e.bnd-est-c-NT}
\sfI\sfI\sfI+\sfI\sfV &\leq C 
\frac{2^{i(d+sp)}}{[\sigma (1-\sigma)]^{d+sp}\rho^{sp}} \left(\frac{\boldsymbol{\mathfrak{S}}}{k}\right)^{p-1}\iint_{Q_i}w_i^p\dxt,
\end{align}
under the restriction
\[
k \geq \left[\boldsymbol{\Gamma}\int_{-\theta}^0 \int_{\R^d \setminus B_R}\frac{u_+^{p-1}(y,\tau)}{|y|^{d+sp}}\dytau + (2-p)_+ \left(\frac{\mathsf{M}_0}{4^{\frac{(2-p)_+}{p-1}}8} \right)^{p-1}\right]^{\nicefrac{1}{(p-1)}}=\ell(0),
\]
where \[
\displaystyle \boldsymbol{\mathfrak{S}}:=\left[\left(\frac{R}{\rho}\right)^d \sup_{t \in (-\theta,0]}\dashint_{B_R}u_+^{p-1}(y,t)\d{y}\right]^{\nicefrac{1}{(p-1)}}.
\]
Indeed, we begin with splitting $\sfI\sfI\sfI$ into two pieces:
\begin{align*}
\sfI\sfI\sfI_1&:=C\iint_{Q_i}\zeta^p\widetilde{w}_i(x,t)\d{x} \left(\sup_{x\,\in\, \widehat{B}_i}\int_{B_R \setminus B_i}\frac{\widetilde{w}_i^{p-1}(y,t)}{|x-y|^{d+sp}}\d{y}\right)\d{t}, \\
\sfI\sfI\sfI_2&:=C\iint_{Q_i}\zeta^p\widetilde{w}_i(x,t)\d{x} \left(\sup_{x\,\in\, \widehat{B}_i}\int_{\R^d \setminus B_R}\frac{\widetilde{w}_i^{p-1}(y,t)}{|x-y|^{d+sp}}\d{y}\right)\d{t}.
\end{align*}
Observe that for any $x \in \widehat{B}_i$ and $y \in B_R \setminus B_i$,
\[
\frac{|y-x|}{|y|} \geq 1-\frac{|x|}{|y|} \geq \frac{1-\sigma}{2^{i+3}},
\]
while, if $y \in \R^d \setminus B_R$ and $x \in \widehat{B}_i$ then
\[
\frac{|y-x|}{|y|} \geq 1-\frac{|x|}{|y|} \geq \frac{R-\rho}{R}.
\]
Thus, we arrive at
\begin{align}\label{e.bnd-est-d-NT}
\sfI\sfI\sfI_1 +\sfI\sfI\sfI_2&\leq C\iint_{Q_i}\zeta^p\widetilde{w}_i(x,t)\d{x} \left[\left(\frac{2^i}{1-\sigma}\right)^{d+sp}\int_{B_R \setminus B_i}\frac{\widetilde{w}_i^{p-1}(y,t)}{|y|^{d+sp}}\d{y}\right]\d{t} \notag\\
& \quad +C\iint_{Q_i}\zeta^p\widetilde{w}_i(x,t)\d{x} \left[\left(\frac{R}{R-\rho}\right)^{d+sp}\int_{\R^d \setminus B_R}\frac{\widetilde{w}_i^{p-1}(y,t)}{|y|^{d+sp}}\d{y}\right]\d{t}.
\end{align}
Next, we turn our attention to $\sfI\sfV$. For this we split the estimate of $\sfI\sfV$ into two cases $p \geq 2$ and $1<p<2$. In the case $p \geq 2$, we crudely estimate that
\begin{align*}
\sfI\sfV&= -C\iint_{Q_i}\zeta^pk^\prime(t)\Big(|u|+|k(t)|\Big)^{p-2}\widetilde{w}_i(x,t)\dxt\\
&\leq  -C\iint_{Q_i}\zeta^pk^\prime(t)k(t)^{p-2}\widetilde{w}_i(x,t)\dxt \\
&=-\frac{C}{p-1}\iint_{Q_i}\zeta^p\frac{\d{}}{\d{t}}k(t)^{p-1}\widetilde{w}_i(x,t)\dxt.
\end{align*}
In the remaining case $1<p<2$, by observation, for $u >k(t)$, that
\[
|u|+|k(t)| <2u \leq 2\mathsf{M}_0 \quad \Longrightarrow \quad -\frac{1}{\Big(|u|+|k(t)|\Big)^{2-p}} \leq -\frac{1}{(2\mathsf{M}_0)^{2-p}}.
\]
This, together with the nonnegativity of $k^\prime(t)$ and 
\[
k(t)=\widetilde{\mathsf{K}}_i \geq \frac{1}{2^{\frac{(2-p)_+}{p-1}}4}\mathsf{M}_0
\]
yields that
\begin{align*}
\sfI\sfV&=-\frac{C}{p-1}\iint_{Q_i}\zeta^p \frac{\d{}}{\d{t}}k^\prime(t)^{p-1} \frac{k(t)^{2-p}}{\Big(|u|+|k(t)|\Big)^{2-p}}\widetilde{w}_i(x,t)\dxt\\
&\leq -\frac{C}{p-1}\iint_{Q_i}\zeta^p \frac{\d{}}{\d{t}}k^\prime(t)^{p-1} \frac{1}{(2\mathsf{M}_0)^{2-p}} \left(\frac{1}{2^{\frac{(2-p)_+}{p-1}}4}\mathsf{M}_0\right)^{2-p}\widetilde{w}_i(x,t)\dxt\\
&=-\frac{C}{(p-1)8^{2-p}2^{\frac{(2-p)^2}{p-1}}}\iint_{Q_i}\zeta^p\frac{\d{}}{\d{t}}k(t)^{p-1}\widetilde{w}_i(x,t)\dxt.
\end{align*}
In conclusion, we have estimated that
\[
\sfI\sfV \leq -\frac{C}{(p-1)8^{(2-p)_+}2^{\frac{(2-p)_+^2}{p-1}}}\iint_{Q_i}\zeta^p\frac{\d{}}{\d{t}}k(t)^{p-1}\widetilde{w}_i(x,t)\dxt.
\]
To estimate further, we need to distinguish between the cases $p \geq 2$ and $1<p<2$. When dealing with $p \geq 2$, it is straightforward to check that
\[
\frac{\d{}}{\d{t}}k(t)^{p-1}=(p-1) \left(\ell(t)+\frac{1}{4}\mathsf{M}_0\right)^{p-2} \frac{\d{}}{\d{t}}\ell(t) \geq \frac{\d{}}{\d{t}}\ell(t)^{p-1}.
\]
In the latter case $1<p<2$, the definition of $\ell(t)$ implies that
\[
\ell(t)+\frac{1}{2^{\frac{2-p}{p-1}}4}\mathsf{M}_0 \leq \left[1+\frac{2^{\frac{2-p}{p-1}+1}}{(2-p)^{\frac{1}{p-1}}}\right]\ell(t),
\]
namely
\[
\left(\ell(t)+\frac{1}{2^{\frac{2-p}{p-1}}4}\mathsf{M}_0 \right)^{p-2} \geq \left[1+\frac{2^{\frac{2-p}{p-1}+1}}{(2-p)^{\frac{1}{p-1}}}\right]^{p-2}\ell(t)^{p-2},
\]
thereby getting
\begin{align*}
\frac{\d{}}{\d{t}}k(t)^{p-1}&=(p-1)\left(\ell(t)+\frac{1}{2^{\frac{2-p}{p-1}}4}\mathsf{M}_0 \right)^{p-2} \frac{\d{}}{\d{t}}\ell(t)\\
&\geq \left[1+\frac{2^{\frac{2-p}{p-1}+1}}{(2-p)^{\frac{1}{p-1}}}\right]^{p-2}\frac{\d{}}{\d{t}}\ell(t)^{p-1}.
\end{align*}
Altogether, we have shown that
\[
\sfI \sfV \leq -\frac{C}{(p-1)8^{(2-p)_+}2^{\frac{(2-p)_+^2}{p-1}}}\iint_{Q_i}\chi_p\zeta^p\frac{\d{}}{\d{t}}\ell(t)^{p-1}\widetilde{w}_i(x,t)\dxt
\]
where the constant $\chi_p$ is defined by~\eqref{e.bnd-est-0-NT}. Consequently, using
\[
\frac{\d{}}{\d{t}}\ell(t)^{p-1}=\boldsymbol{\Gamma} \int_{\R^d \setminus B_R} \frac{u_+^{p-1}(y,t)}{|y|^{d+sp}}\d{y}
\]
implies $\sfI\sfI\sfI_2+\sfI\sfV \leq 0$, and therefore
\begin{align*}
\sfI\sfI\sfI&+\sfI\sfV \leq \sfI\sfI\sfI_1+\sfI\sfI\sfI_2+\sfI\sfV\\
&\leq C\iint_{Q_i}\zeta^p\widetilde{w}_i(x,t)\d{x} \left[\left(\frac{2^i}{1-\sigma}\right)^{d+sp}\int_{B_R \setminus B_i}\frac{\widetilde{w}_i^{p-1}(y,t)}{|y|^{d+sp}}\d{y}\right]\d{t}\\
&\leq C\left(\frac{2^i}{1-\sigma}\right)^{d+sp}\frac{R^d}{(\sigma \rho)^{d+sp}} \iint_{Q_i}\widetilde{w}_i \d{x}\left[\dashint_{B_R} w_i^{p-1}(y,t)\d{y}\right]\d{t}\\
&\leq C\frac{2^{i(d+sp)}}{\left[(1-\sigma)\sigma\right]^{d+sp}\rho^{sp}} \left(\frac{R}{\rho}\right)^d \left[\sup_{t \in (-\theta, 0]}\dashint_{B_R \times \{t\}}w_i^{p-1}\d{y}\right]\times \iint_{Q_i}\widetilde{w}_i\dxt
\end{align*}
for a constant $C(\data)<\infty$. This is~\eqref{e.bnd-est-c-NT}.

Combining the previous three estimates~\eqref{e.bnd-est-a-NT}--\eqref{e.bnd-est-c-NT}, we have 
\begin{align}\label{e.bnd-est-4-NT}
&\sfS+\int_{-\widetilde{\theta}_i}^0 \iint_{\widetilde{B}_i \times \widetilde{B}_i}  \frac{|\widetilde{w}_i(x,t)-\widetilde{w}_i(y,t)|^p}{|x-y|^{d+sp}}\dxyt 
\notag\\[4pt]
&\quad \quad \leq  \sfI+\sfI\sfI+\sfI\sfI\sfI+\sfI\sfV \notag\\[4pt]
& \quad \quad \leq C \frac{\sfb^i}{(1-\sigma)^{d+p}} \left[\frac{1}{\rho^{sp}}+\frac{1}{\theta}+\frac{1}{\sigma^{d+sp} \rho^{sp}}\left(\frac{\boldsymbol{\mathfrak{S}}}{k}\right)^{p-1}\right]\iint_{Q_i}w_i^p\dxt \notag\\
& \quad \quad \leq C \frac{\sfb^i}{(1-\sigma)^{d+p}\rho^{sp}} \left[\frac{\rho^{sp}}{\theta}+\frac{1}{\delta^{p-1}\sigma^{d+sp}}\right]\iint_{Q_i}w_i^p\dxt
\end{align}
for a constant $C(\data)<\infty$ and $\sfb:=2^{d+sp+p(2 \vee p)+p}>1$. To obtain the last line we have imposed 
\begin{equation}\label{e.k-condition-NT}
k \geq \delta \boldsymbol{\mathfrak{S}} \vee \left(\ell(0)+\frac{1}{2^{\frac{(2-p)_+}{p-1}} 4}\mathsf{M}_0\right)
\end{equation}
where $\delta \in (0,1]$ is to be selected below.
\smallskip

\emph{Step 2.} To proceed further, we split the observation into two cases: $p\geq 2$ and $ p \in (1,2)$. In the case $p \geq 2$, using Lemma~\ref{t.g} and~\eqref{e.bnd-est-4-NT}, we crudely estimate that, for every $i \in \N_0$, 
\begin{align*}
\sup_{t \in (-\widetilde{\theta}_i,0]}\dashint_{\widetilde{B}_i \times \{t\}}\tilde{w}_{i}^p\d{x} &\leq \sup_{t \in (-\widetilde{\theta}_i,0]}\dashint_{\widetilde{B}_i \times \{t\}} \left(|u|+|\widetilde{\mathsf{K}}_i|\right)^{p-2} \left(u-\widetilde{\mathsf{K}}_i\right)_+^2\d{x}\\
&\leq \frac{1}{c_1(p)|\widetilde{B}_i|}\sup_{t \in (-\widetilde{\theta}_i,0]}\int_{\widetilde{B}_i \times \{t\}} \mathfrak{g}_+(u,\widetilde{\mathsf{K}}_i)\d{x}\\
& \!\!\!\stackrel{\eqref{e.bnd-est-4-NT}}{\leq} \frac{C}{|\widetilde{B}_i|}\left[\frac{\sfb^i}{(1-\sigma)^{d+p}\rho^{sp}} \left(\frac{\rho^{sp}}{\theta}+\frac{1}{\delta^{p-1}\sigma^{d+sp}}\right)\iint_{Q_i}w_i^p\dxt\right].
\end{align*}

We next consider the case $1< p <2$. Using the triangle inequality and H\"{o}lder's inequality with $\left(\nicefrac{2}{p}, \nicefrac{2}{(2-p)}\right)$ and, subsequently, appealing to Lemma~\ref{t.g} and~\eqref{e.bnd-est-4-NT} yields, under~\eqref{e.k-condition-NT}, that
\begin{align*}
\sup_{t \in (-\widetilde{\theta}_i,0]}&\dashint_{\widetilde{B}_i \times \{t\}}\tilde{w}_{i}^p\d{x} \\[4pt]
&=\sup_{t \in (-\widetilde{\theta}_i,0]}\dashint_{\widetilde{B}_i \times \{t\}} \left(|u|+|\widetilde{\mathsf{K}}_i|\right)^{\nicefrac{p(p-2)}{2}}\left(u-\widetilde{\mathsf{K}}_i\right)_+^p \cdot \left(|u|+|\widetilde{\mathsf{K}}_i|\right)^{\nicefrac{p(2-p)}{2}}\d{x} \\[4pt]
&\leq \left(\sup_{t \in (-\widetilde{\theta}_i,0] }\dashint_{\widetilde{B}_i \times \{t\}}\left(|u|+|\widetilde{\mathsf{K}}_i|\right)^{p-2}\left(u-\widetilde{\mathsf{K}}_i\right)_+^2\d{x} \right)^{\nicefrac{p}{2}} \\
&\quad \quad \quad \quad \times \left(\sup_{t \in (-\widetilde{\theta}_i,0] } \dashint_{\widetilde{B}_i \times \{t\}}\left(|u|+|\widetilde{\mathsf{K}}_i|\right)^p \1_{\{u(\cdot,t)>\widetilde{\mathsf{K}}_i\}}\d{x} \right)^{\nicefrac{(2-p)}{2}}\\[4pt]
&\leq \frac{2^{\nicefrac{p(2-p)}{2}}}{c_1^{\nicefrac{p}{2}}}\left(\sup_{t \in (-\widetilde{\theta}_i,0] }\dashint_{\widetilde{B}_i \times \{t\}}\mathfrak{g}_+(u,\widetilde{\mathsf{K}}_i)\d{x} \right)^{\nicefrac{p}{2}}\mathsf{M}_0^{\nicefrac{p(2-p)}{2}}\\[4pt]
&\leq C(p)\left(\frac{1}{|\widetilde{B}_i|}\sup_{t \in (-\widetilde{\theta}_i,0] }\int_{\widetilde{B}_i \times \{t\}}\mathfrak{g}_+(u,\widetilde{\mathsf{K}}_i)\d{x}\right)^{\nicefrac{p}{2}} \left(\frac{\mathsf{M}_0}{2^{\frac{(2-p)}{p-1}}4}\right)^{\nicefrac{p(2-p)}{2}}\\[4pt]
&\!\!\!\stackrel{\eqref{e.bnd-est-4-NT}}{\leq} C\frac{k^{\nicefrac{p(2-p)}{2}}}{|\widetilde{B}_i|^{\nicefrac{p}{2}}}\left[\frac{\sfb^i}{(1-\sigma)^{d+p}\rho^{sp}} \left(\frac{\rho^{sp}}{\theta}+\frac{1}{\delta^{p-1}\sigma^{d+sp}}\right)\iint_{Q_i}w_i^p\dxt\right]^{\nicefrac{p}{2}}
\end{align*}
with $C(\data)<\infty$ in the case $1<p<2$.

Consequently, in any case $p \in (1,\infty)$, we have proven that for $k$ large enough to obey~\eqref{e.k-condition-NT},
\begin{align}\label{e.bnd-est-6-NT}
\sup_{t \in (-\widetilde{\theta}_i,0]}&\,\dashint_{\widetilde{B}_i \times \{t\}}\tilde{w}_{i}^p\d{x} \notag\\
&\leq C\frac{k^{\frac{p(2-p)_+}{2}}}{|\widetilde{B}_i|^{\frac{p \wedge 2}{2}}}\left[\frac{\sfb^i}{(1-\sigma)^{d+p}\rho^{sp}} \left(\frac{\rho^{sp}}{\theta}+\frac{1}{\delta^{p-1}\sigma^{d+sp}}\right)\iint_{Q_i}w_i^p\dxt\right]^{\frac{p \wedge 2}{2}},
\end{align}
where the constant $C<\infty$ depends only on $\data$.
\smallskip

\emph{Step 3.} We argue next that there exists $C(\data)<\infty$ such that, for every $i \in \N_0$, 
\begin{align}\label{e.bnd-est-7-NT}
\iint_{Q_{i+1}}&w_{i+1}^p\dxt \notag\\
&\leq C\left[\frac{\sfb^i}{(1-\sigma)^{d+p}} \left(\frac{\rho^{sp}}{\theta}+\frac{1}{\delta^{p-1}\sigma^{d+sp}}\right)\right]^{\nicefrac{1}{\kappa}}\left[\iint_{Q_i}w_i^p\dxt\right] k^{-p(1-\frac{1}{\kappa})}\notag\\
&\quad \quad \quad \quad \quad \times \left[\sup_{t \in (-\widetilde{\theta}_i,0]}\biint_{\widetilde{B}_i \times \{t\}}\tilde{w}_{i}^p\d{x}\right]^{\frac{\kappa_\ast-1}{\kappa \kappa_\ast}}.
\end{align}
Indeed, let $\varphi: \R^{d+1} \to [0,1]$ be a smooth cutoff function satisfying
\[
\1_{Q_{i+1}} \leq \varphi \leq \1_{\widetilde{Q}_i}, \quad |\nabla \varphi| \leq 2^{i+4}/\rho, \quad \mathrm{supp}\,(\varphi) \subset \widetilde{Q}_i.
\]
By the combination of~Proposition~\ref{FS} with $\mathsf{d}=2^{-i-4}$ and H\"{o}lder's inequality implies that
\begin{align}\label{e.bnd-est-8-NT}
\iint_{Q_{i+1}}w_{i+1}^p\dxt &\leq C \Bigg[\rho^{sp} \int_{-\widetilde{\theta}_i}^0\iint_{\widetilde{B}_i \times \widetilde{B}_i}  \frac{\left|\varphi w_{i+1}(x,t)-\varphi w_{i+1}(y,t)\right|^p}{|x-y|^{d+sp}}\dxyt  \notag\\
&\quad \quad \quad \quad \quad \quad \quad \quad \quad \quad +\frac{1}{\mathsf{d}^{d+sp}}\iint_{\widetilde{Q}_i}(\varphi w_{i+1})^p\dxt\Bigg]^{\nicefrac{1}{\kappa}} \notag\\
&\quad \quad \quad \quad \quad \times \left[\sup_{t \in (-\widetilde{\theta}_i,0]}\dashint_{\widetilde{B}_i \times \{t\}}(\varphi w_{i+1})^p\d{x}\right]^{\frac{\kappa_\ast-1}{\kappa \kappa_\ast}}|\widetilde{A}_i|^{1-\nicefrac{1}{\kappa}},
\end{align}
where $\kappa=1+\frac{\kappa_\ast-1}{\kappa_\ast}$ with $\kappa_\ast$ as in Proposition~\ref{FS} and 
\[
\widetilde{A}_i:=\left\{u(x,t)>\widetilde{K}_i \right\} \cap Q_i.
\]
Then, $|\widetilde{A}_i|$ has the following Chebyshev type estimate, which is easy to check: for any $i \in \N_0$
\begin{equation}\label{e.bnd-est-9-NT}
|\widetilde{A}_i| \leq c(p)\frac{2^{(i+2)p(2 \vee p)}}{k^p}\iint_{Q_i}w_i^p\dxt.
\end{equation}
This, together with Lemma~\ref{t.alg-est-3} and~\eqref{e.bnd-est-4-NT}, yields that
\begin{align}\label{e.bnd-est-10-NT}
&\rho^{sp}\int_{-\widetilde{\theta}_i}^0\iint_{\widetilde{B}_i \times \widetilde{B}_i}  \frac{\left|\varphi w_{i+1}(x,t)-\varphi w_{i+1}(y,t)\right|^p}{|x-y|^{d+sp}}\dxyt \notag\\
& \quad \leq C \frac{\sfb^i}{(1-\sigma)^{d+p}} \left(\frac{\rho^{sp}}{\theta}+\frac{1}{\delta^{p-1}\sigma^{d+sp}}\right)\iint_{Q_i}w_i^p\dxt
\end{align}
for a constant $C(\data)<\infty$. Thus, the statement~\eqref{e.bnd-est-7-NT} follows from the three displays~\eqref{e.bnd-est-8-NT}--\eqref{e.bnd-est-10-NT}. 

To shorten the notation, denote
\[
\sfY_{i}:=\iint_{Q_i}w_i^p\dxt \quad \mbox{and} \quad \varkappa_p:=\frac{1}{\kappa}+\frac{p \wedge 2}{2}\frac{\kappa_\ast-1}{\kappa \kappa_\ast}.
\]
In the light of~\eqref{e.bnd-est-6-NT} and~\eqref{e.bnd-est-7-NT} we obtain, for every $i \in \N_0$, that
\begin{align*}
\sfY_{i+1} &\leq \frac{C \widetilde{\sfb}^i \left(\frac{\rho^{sp}}{\theta}+\frac{1}{\delta^{p-1}\sigma^{d+sp}}\right)^{\frac{1}{\kappa}+\frac{p \wedge 2}{2}\frac{\kappa_\ast-1}{\kappa \kappa_\ast}}}{(1-\sigma)^{(d+p)\left(\frac{1}{\kappa}+\frac{p \wedge 2}{2}\frac{\kappa_\ast-1}{\kappa \kappa_\ast}\right)}}\cdot \frac{k^{\frac{p(2-p)_+}{2}\cdot\frac{\kappa_\ast-1}{\kappa \kappa_\ast}-p(1-\frac{1}{\kappa})} }{\rho^{(d+sp)\frac{p \wedge 2}{2}\frac{\kappa_\ast-1}{\kappa \kappa_\ast}}}\sfY_{i} ^{1+\frac{p \wedge 2}{2}\frac{\kappa_\ast-1}{\kappa \kappa_\ast}}\\
&= \frac{C\widetilde{\sfb}^i \left(\frac{\rho^{sp}}{\theta}+\frac{1}{\delta^{p-1}\sigma^{d+sp}}\right)^{\varkappa_p}}{(1-\sigma)^{(d+p)\varkappa_p}}\frac{\sfY_{i} ^{1+\frac{p \wedge 2}{2}\frac{\kappa_\ast-1}{\kappa \kappa_\ast}}}{(\rho^{d+sp}k^p)^{\varkappa_p-\nicefrac{1}{\kappa}}}
\end{align*}
with $C(\data)<\infty$ and $\widetilde{\sfb}:=2^{d\frac{\kappa_\ast-1}{\kappa \kappa_\ast}}\sfb>1$, because
\[
\frac{p(2-p)_+}{2}\cdot\frac{\kappa_\ast-1}{\kappa \kappa_\ast}-p\left(1-\frac{1}{\kappa}\right)=-p\frac{p \wedge 2}{2}\frac{\kappa_\ast-1}{\kappa \kappa_\ast}=-p\left(\varkappa_p-\frac{1}{\kappa}\right).
\]
Now, set
\begin{align*}
&q_\ast:=\frac{2}{p(p \wedge 2)}\varkappa_p \frac{\kappa \kappa_\ast}{\kappa_\ast-1} \quad \mbox{and} \\
&\overline{\boldsymbol{\cA}}_\delta \equiv \overline{\boldsymbol{\cA}}_\delta\left(\sigma,\rho,R,\theta\right):=\left(\frac{\rho^{sp}}{\theta}+\frac{1}{\delta^{p-1}\sigma^{d+sp}}\right)^{q_\ast}\left(\frac{\theta}{\rho^{sp}}\right)^{\nicefrac{1}{p}}.
\end{align*}
If we impose another restriction on $k$, namely that
\begin{align}\label{e.k-condition3-NT}
&\sfY_{0}\leq \left[ \frac{C \left(\frac{\rho^{sp}}{\theta}+\frac{1}{\delta^{p-1}\sigma^{d+sp}}\right)^{\varkappa_p}}{(1-\sigma)^{(d+p)\varkappa_p}(\rho^{d+sp}k^p)^{\varkappa_p-1/\kappa}}\right]^{-\nicefrac{1}{\left(\frac{p \wedge 2}{2}\frac{\kappa_\ast-1}{\kappa\kappa_\ast}\right)}}\widetilde{\sfb}^{-\nicefrac{1}{\left(\frac{p \wedge 2}{2}\frac{\kappa_\ast-1}{\kappa\kappa_\ast}\right)^2}} \notag \\[4pt]
\iff & \quad k \geq \frac{\overline{C}}{(1-\sigma)^{(d+p)q_\ast}}\overline{\boldsymbol{\cA}}_\delta\left(\biint_{Q_{\rho,\theta}}\left[u-\left(\ell(t)+\frac{1}{2^{\frac{(2-p)_+}{p-1}} 4}\mathsf{M}_0\right)\right]_+^p\dxt \right)^{\nicefrac{1}{p}}  
\end{align}
for a constant $\overline{C}(\data) <\infty$, where to obtain the last line $\widetilde{\sfb}(d,s,p)>1$ was absorbed into a lump constant $\overline{C}$. Thus, Lemma~\ref{t.FGC} says that
\begin{equation}\label{e.bnd-est-11-NT}
\sfY_{i} \to 0 \quad \mbox{as} \quad i \to \infty. 
\end{equation}
Select $k \in [0,\infty)$ such that
\[
k=\left(\ell(0)+\frac{1}{2^{\frac{(2-p)_+}{p-1}}4}\mathsf{M}_0\right)+\delta \boldsymbol{\mathfrak{S}}+\frac{\overline{C}}{(1-\sigma)^{(d+p)q_\ast}}\overline{\boldsymbol{\cA}}_\delta\left(\biint_{Q_{\rho,\theta}}u_+^p\dxt \right)^{\nicefrac{1}{p}},
\]
which is warranted by conditions~\eqref{e.k-condition-NT} and~\eqref{e.k-condition3-NT}. Having at hand this choice of $k$, applying Lemma~\ref{t.alg-est-2} with $\alpha=1/(p-1)$, ~\eqref{e.bnd-est-11-NT} implies that 
\begin{align*}
\sup_{Q_{\sigma \rho, \sigma \theta}}u_+ &\leq 2^{\frac{(2-p)_+}{p-1}}\left(k+\frac{1}{2^{\frac{(2-p)_+}{p-1}}4}\mathsf{M}_0+\ell(0)\right)\\
&\leq 2^{\frac{(2-p)_+}{p-1}}\left[2\ell(0)+\frac{2}{2^{\frac{(2-p)_+}{p-1}}4}\mathsf{M}_0 + \delta \boldsymbol{\mathfrak{S}}+\frac{\overline{C}}{(1-\sigma)^{(d+p)q_\ast}}\overline{\boldsymbol{\cA}}_\delta\left(\biint_{Q_{\rho,\theta}}u_+^p\dxt \right)^{\nicefrac{1}{p}}\right].
\end{align*}
Here we crudely estimate, using Lemma~\ref{t.alg-est-2} with $\alpha=1/(p-1)$ again, that
\begin{align*}
\ell(0) &\leq 2^{\frac{(2-p)_+}{p-1}}\left(\boldsymbol{\Gamma}\int_{-\theta}^0 \int_{\R^d \setminus B_R} \frac{u_+^{p-1}(y,\tau)}{|y|^{d+sp}}\dytau\right)^{\frac{1}{p-1}}+2^{\frac{(2-p)_+}{p-1}}\underbrace{(2-p)_+^{\frac{1}{p-1}}}_{\leq 1} \frac{1}{4^{\frac{(2-p)_+}{p-1}}8}\mathsf{M}_0 \\
&\leq 2^{\frac{(2-p)_+}{p-1}}\left(\boldsymbol{\Gamma}\int_{-\theta}^0 \int_{\R^d \setminus B_R} \frac{u_+^{p-1}(y,\tau)}{|y|^{d+sp}}\dytau\right)^{\frac{1}{p-1}}+\frac{1}{2^{\frac{(2-p)_+}{p-1}}8}\mathsf{M}_0.
\end{align*}
Using this, the last estimate above becomes
\begin{align*}
\sup_{Q_{\sigma \rho, \sigma \theta}}u_+ &\leq 2^{\frac{(2-p)_+}{p-1}}\left[2^{\frac{(2-p)_+}{p-1}+1}\left(\boldsymbol{\Gamma}\int_{-\theta}^0 \int_{\R^d \setminus B_R} \frac{u_+^{p-1}(y,\tau)}{|y|^{d+sp}}\dytau\right)^{\frac{1}{p-1}}+\frac{3}{2^{\frac{(2-p)_+}{p-1}}4}\mathsf{M}_0  \right.\\
&\left. \quad \quad \quad \quad \quad \quad \quad+ \delta \boldsymbol{\mathfrak{S}}+ \frac{\overline{C}}{(1-\sigma)^{(d+p)q_\ast}}\overline{\boldsymbol{\cA}}_\delta\left(\biint_{Q_{\rho,\theta}}u_+^p\dxt \right)^{\nicefrac{1}{p}}\right]\\
&\leq \frac{3}{4}\mathsf{M}_0+\frac{C(p)}{(1-\sigma)^{(d+p)q_\ast}}\sfQ,
\end{align*}
where we used shorthand notation
\[
\sfQ:=\left(\boldsymbol{\Gamma}\int_{-\theta}^0 \int_{\R^d \setminus B_R} \frac{u_+^{p-1}(y,\tau)}{|y|^{d+sp}}\dytau\right)^{\frac{1}{p-1}}+\delta \boldsymbol{\mathfrak{S}}+\overline{C}\,\overline{\boldsymbol{\cA}}_\delta\left(\biint_{Q_{\rho,\theta}}u_+^p\dxt \right)^{\nicefrac{1}{p}}.
\]
Appealing to this over concentric cylinders $\sigma_1Q_{ \rho, \theta} \subset \sigma_2Q_{\rho, \theta}$ with $\sigma \leq \sigma_1<\sigma_2 \leq  1$, we deduce that
\[
\sup_{\sigma_1Q_{\rho, \theta}} u_+ \leq \frac{3}{4} \sup_{\sigma_2Q_{\rho, \theta}} u_+ +\frac{C(p)}{(\sigma_2-\sigma_1)^{(d+p)q_\ast}} \sfQ.
\]
A standard iteration argument (see~\cite[Lemma 6.1, Page 191]{Giusti}) then yields that
\[
\sup_{\sigma Q_{\rho, \theta}}u_+ \leq \frac{C}{(1-\sigma)^{(d+p)q_\ast}}\sfQ
\]
for a constant $C(p,q_\ast)<\infty$. Note that the quantity $q_\ast$ depends only on $(d,s,p)$.

Now, fix an arbitrary exponent $\nu \in (0,p)$. After replacing $\sigma$ by $\nicefrac{(1+\sigma)}{2}$ in the above estimate, appealing to Young's inequality and rearranging yield that, for every $Q_{R,\theta} \subset \Omega_T$, $\sigma \in (0,1)$ and $\rho \in (0,R)$, 
\begin{align*}
\sup_{\frac{1+\sigma}{2}Q_{ \rho, \theta}}u_+
&\leq \frac{c}{(1-\sigma)^{(d+p)q_\ast}}\delta\left[1+\left(\frac{R}{\rho}\right)^{\frac{d}{p-1}}\right]\sup_{Q_{R,\theta}} u_+ \notag\\
&\quad \quad  +\frac{c}{(1-\sigma)^{(d+p)q_\ast}}\left[\left(\frac{R}{R-\rho}\right)^{d+sp} \int_{-\theta}^0 \int_{\R^d \setminus B_R}\frac{u_+^{p-1}}{|x|^{d+sp}}\dxt \right]^{\frac{1}{p-1}} \notag\\
&\quad \quad  +C^{\frac{p}{\nu}}\delta^{-\frac{p-\nu}{\nu}}\overline{\boldsymbol{\cA}}^{\frac{p}{\nu}}_\delta\left[\frac{1}{(1-\sigma)^{(d+p)q_\ast p}}\biint_{Q_{\rho,\theta}}u_+^\nu\dxt \right]^{\nicefrac{1}{\nu}}, 
\end{align*}
where $c(d,s,p)<\infty$ and $C(\data)<\infty$. With this estimate at hand, for the remainder part, the argument developed in~\cite[Section 3.2, Step 4]{CN26} applies verbatim by choosing small enough $\delta \in (0,1)$ to depend on $\sigma^{d/p}$ and $\data$ without any extra adjustment, and therefore completes the proof.
\end{proof}

As a corollary of Theorem~\ref{t.boundedness-NT}, we also have the following local boundedness. 

\begin{corollary}\label{t.boundedness-NT-cor}
Assume the same assumption as in Theorem~\ref{t.boundedness-NT}. Let $\tau \in (0,T)$ be and arbitrary time. There exists a Lebesgue instant $\theta \in (0,\tau)$ such that for $B_\rho(x_0) \times [\theta, \tau] \Subset \Omega_T$, the following estimate is valid:
\begin{align*}
\sup_{B_{\sigma \rho}(x_0) \times [\theta, \tau]}u_\pm &\leq \frac{C_\ast}{(1-\sigma)^{(d+p)q_\ast+\frac{d+sp}{p-1}}}\left[ \frac{\tau-\theta}{(\sigma \rho)^{sp}}\dashint_{\theta}^{\tau} \tail \big(u_\pm(t)\,; B_{\sigma \rho}(x_0)\big)^{p-1}\d{t}\right]^{\nicefrac{1}{(p-1)}}\\
&\quad \quad \quad \quad \quad \quad \quad +C_\ast \left[\frac{\sfA}{(1-\sigma)^{(d+p)q_\ast p}}\biint_{B_\rho(x_0) \times [\theta, \tau]}u_\pm^{p-1}\dxt \right]^{\nicefrac{1}{(p-1)}}
\end{align*}
with a constant $C_\ast(\data)<\infty$ and an exponent $q_\ast(d,s,p)>0$,  where we denoted
\[
\sfA:=\left(\frac{\rho^{sp}}{\tau-\theta}+\dfrac{1}{\sigma^{d+sp}}\right)^{pq_\ast}\left(\dfrac{\tau-\theta}{\rho^{sp}}\right) \quad \mbox{and} \quad
q_\ast:=\begin{cases}
\frac{1}{p}\left(\frac{2}{p \wedge 2}\frac{d}{sp}+1\right), \quad &\mbox{if}\quad sp<d,\\[2mm]
\frac{1}{p}\left(\frac{4}{p \wedge 2}+1\right), \quad &\textrm{if}\quad sp\geq d.
\end{cases}
\]

\end{corollary}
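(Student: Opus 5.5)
The plan is to apply Theorem~\ref{t.boundedness-NT} directly, on a cylinder whose top is the given time $\tau$ and whose height is $\tau-\theta$, with the choice $\nu=p-1\in(0,p]$. That is, we set $t_0:=\tau$ and $\theta_{\mathrm{Thm}}:=\tau-\theta$. Then $Q_{\rho,\tau-\theta}(x_0,\tau)=B_\rho(x_0)\times(\theta,\tau]$, and this cylinder is contained in $\Omega_T$ by the assumption $B_\rho(x_0)\times[\theta,\tau]\Subset\Omega_T$.

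With these substitutions the right-hand side of Theorem~\ref{t.boundedness-NT} becomes exactly the right-hand side of the corollary. Indeed, $\boldsymbol{\cA}$ turns into $\sfA$ once $\theta$ is replaced by $\tau-\theta$. The tail average $\dashint_{t_0-\theta}^{t_0}$ becomes $\dashint_\theta^\tau$. The constant $C_\nu$ with $\nu=p-1$ depends only on $\data$, so we may take $C_\ast:=\max\{C,C_{p-1}\}$.

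The only remaining discrepancy is the domain of the supremum. The theorem bounds $u_\pm$ on $\sigma Q_{\rho,\tau-\theta}=B_{\sigma\rho}\times(\tau-\sigma(\tau-\theta),\tau]$, whereas the corollary asks for the full time slab $[\theta,\tau]$.

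\textbf{Step 1.} We first handle the bottom time layer through the choice of the Lebesgue instant $\theta$. Since $u\in C([0,T];L^p(\Omega))$ and $\tail(u_\pm(\cdot);B_{\sigma\rho})^{p-1}\in L^1_{\mathrm{loc}}$, almost every $\theta\in(0,\tau)$ is a Lebesgue point of all the time functions involved. For such a $\theta$ we run the argument with a slightly longer cylinder based at $\theta-h$ and let $h\downarrow0$. Continuity of the averages at Lebesgue points gives the estimate on $B_{\sigma\rho}\times[\theta,\tau]$ with the same constants.

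\textbf{Step 2.} Alternatively, and this is what I expect to be needed, the bottom layer can be controlled by keeping the time cutoff trivial in the Caccioppoli estimate \eqref{e.caccioppoli1}. With $\zeta$ depending only on $x$, the term $\partial_t\zeta$ vanishes and the only contribution from the bottom is the initial term $\int_{B_\rho\times\{\theta\}}\mathfrak g_\pm(u,k)\d x$. That term has to be absorbed, and the choice of $\theta$ as a Lebesgue instant is what makes this possible. Concretely, I would redo the De Giorgi iteration in the proof of Theorem~\ref{t.boundedness-NT} with cylinders shrinking only in space, $Q_i=B_{\rho_i}\times(\theta,\tau]$, and keep every $\theta_i$ equal to $\tau-\theta$. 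The factor $\rho^{sp}/\theta$ in $\boldsymbol{\cA}$ is then just the bounded quantity $\rho^{sp}/(\tau-\theta)$ with no $(1-\sigma)$ loss. The initial term is handled by a time-dependent level $\ell(t)$ started at $\theta$, exactly as the tail is handled in Step~1 of that proof.

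\textbf{Main obstacle.} The main obstacle is justifying that the initial datum term at $t=\theta$ does not spoil the iteration, that is, showing $\int_{B_{\rho}\times\{\theta\}}\mathfrak g_\pm(u,k_i)\d x\to0$ along the levels. I would first try the approximation argument: apply Theorem~\ref{t.boundedness-NT} on $B_\rho\times(\theta-h,\tau]$, where the time interval genuinely extends below $\theta$, and then let $h\to0$. This uses the Lebesgue-point property of $\theta$ for the tail average and for $t\mapsto\int_{B_\rho}u_\pm^{p-1}(t)\d x$, together with the monotonicity in the cylinder height of the averaged quantities, up to constants. Replacing the exponent $\nu$ by $p-1$ is immediate from the theorem and introduces no further difficulty.
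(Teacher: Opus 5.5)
There is a genuine gap. Neither of your two mechanisms reaches the bottom time $\theta$, and the ingredient that makes the corollary work, a \emph{quantitative} selection of $\theta$, is missing.

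\textbf{Step 1 cannot work.} Letting $h\downarrow 0$ does not rescue the direct application of Theorem~\ref{t.boundedness-NT}. On $Q_{\rho,\tau-\theta+h}(x_0,\tau)$ the theorem controls $u_\pm$ only on $B_{\sigma\rho}\times(\tau-\sigma(\tau-\theta+h),\tau]$, because the factor $\sigma$ shrinks the time height as well. The uncovered bottom layer therefore has height about $(1-\sigma)(\tau-\theta)$, uniformly in $h$. Reaching $\theta$ forces a cylinder starting at $\tau-(\tau-\theta)/\sigma$, and then every average on the right is taken over that longer interval, not over $[\theta,\tau]$.

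More fundamentally, being a Lebesgue instant is not enough. By the $L^p$-continuity every instant is one. Yet take $p=2$, $\sfk=c_{d,s}|x-y|^{-d-2s}$, $x_0=0$ and $u(x,t)=\Phi(x,t+\epsilon)$, the fractional heat kernel. Then $\sup_{B_{\sigma\rho}\times\{\theta\}}u\gtrsim(\theta+\epsilon)^{-d/(2s)}$. On the other hand $\biint_{B_\rho\times[\theta,\tau]}u\dxt\le|B_\rho|^{-1}$, and both the tail term and $\sfA$ stay bounded for $\theta\le\tau/2$. So for small $\epsilon$ the estimate fails on a set of $\theta$ of positive measure. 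The ``there exists'' in the statement is essential, and no limiting argument at a generic Lebesgue point can produce it.

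\textbf{Step 2 has the right skeleton but the wrong absorption mechanism.} The setup is the paper's: a cutoff $\xi=\xi(x)$ independent of time, and cylinders $Q_i=B_i\times[\theta,\tau]$ shrinking only in space. Then $\partial_t\xi^p=0$, and the boundary term $\int_{B_i\times\{\theta\}}\mathfrak{g}_+(u,k(t))\d{x}$ replaces $\sfI$ in \eqref{e.bnd-est-1-NT}. Your way of absorbing it fails, for two reasons.
\begin{itemize}
\item The time-dependent level $\ell(t)$ only generates the negative term $\sfI\sfV$ that cancels the far tail $\sfI\sfI\sfI_2$. It does nothing to a fixed nonnegative quantity at $t=\theta$. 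A level can remove that quantity only by exceeding $\sup_{B_\rho}u(\cdot,\theta)$, which is not an admissible right-hand side.
\item Showing the term tends to $0$ along the levels is not the right target. At each step it must be bounded by $C\sfb^i(\tau-\theta)^{-1}\iint_{Q_i}w_i^p\dxt$ for the recursion to close.
\end{itemize}

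\textbf{The missing idea: select $\theta$ by averaging.} Set $\sfG(t):=\int_{B_i\times\{t\}}\mathfrak{g}_+(u,k(t))\d{x}$, which is continuous in $t$ by the $L^p$-continuity of $u$. The set $\{t\in I:\sfG(t)\le 2\dashint_I\sfG\d{t}\}$ has positive measure; otherwise averaging would give $\dashint_I\sfG>2\dashint_I\sfG$. For $\theta$ in this set,
\[
\int_{B_i\times\{\theta\}}\mathfrak{g}_+\big(u,k(t)\big)\d{x}\le\frac{2}{|I|}\iint_{B_i\times I}\mathfrak{g}_+\big(u,k(t)\big)\dxt .
\]
This has exactly the form of $\sfI$ in \eqref{e.bnd-est-1-NT}: the factor $2^{i+4}/((1-\sigma)\theta)$ coming from $|\partial_t\zeta|$ is replaced by $2/(\tau-\theta)$. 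Hence the argument leading to \eqref{e.bnd-est-a-NT} bounds it by $C2^{ip(2\vee p)}(\tau-\theta)^{-1}\iint_{Q_i}w_i^p\dxt$. The rest of the iteration then goes through with no shrinking in time. So does the qualitative bound of Proposition~\ref{t.qualitative-bnd-NT}, redone on $B_\rho\times[\theta,\tau]$ with the same selection.
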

\begin{proof}
The proof is done by essentially the same argument as in Theorem~\ref{t.boundedness-NT}. Indeed, we need to do is slightly changing the set up: Fix $\sigma \in (0,1)$, $\rho \in (0,\infty)$ and $\tau \in (0,T)$. Let $\theta \in (0,\tau)$ be an admissible initial time level to be specified later, and define $I:=[\theta, \tau]$. 

For $i \in \N_0$, define $\rho_i:=\sigma \rho+2^{-i}(1-\sigma) \rho$. Consider families of cylinders $\{Q_i\}_{i \in \N_0}$, $\{\widetilde{Q}_i\}_{i \in \N_0}$, $\{\widehat{Q}_i\}_{i \in \N_0}$ and $\{\overline{Q}_i\}_{i \in \N_0}$ given by
\[
Q_i:=B_i \times I, \quad \widetilde{Q}_i:=\widetilde{B}_i \times I, \quad \widehat{Q}_i:=\widehat{B}_i \times I \quad \mbox{and} \quad \overline{Q}_i:=\overline{B}_i \times I.
\]
We then select a smooth cutoff function, independent of time, $\xi=\xi(x):\R^d\to [0,1]$ satisfying
\[
\1_{\widetilde{B}_i} \leq \xi \leq \1_{\widehat{B}_i}, \quad |\nabla \xi| \leq \frac{2^{i+4}}{(1-\sigma)\rho}.
\]
Note that $\partial_t\xi^p =0$. Basically, replacing the cutoff function $\zeta$ as presented in the proof of Theorem~\ref{t.boundedness-NT} by $\xi$ we will demonstrate the same argument as Steps 1--4 with some modifications. To see it, define
\[
\sfG(t) :=\int_{B_i \times \{t\}}\mathfrak{g}_+(u, k(t))\d{x} \quad \mbox{for} \quad t \in I.
\]
Since $\sfG (\cdot) \in L^1(I)$, the set
\[
E:=\left\{t \in I : \sfG (t) \leq  2 \dashint_I \sfG (\tau) \d{\tau}\right\}
\]
has positive measure. Indeed, otherwise
\[
\sfG(t) > 2 \dashint_I \sfG(\tau) \d{\tau}
\]
for almost every $t \in I$, and taking the integral average over $I$ gives
\[
\dashint_I\sfG(t) \d{t}> 2 \dashint_I \sfG(t) \d{t};
\]
a contradiction. By the $L^p$-continuity of weak solutions as in Definition~\ref{def-of-NT}, every time level is a Lebesgue instant for $\sfG(\cdot)$; more precisely, due to the $L^p$-continuity of weak solutions $u$, and the continuity of $\mathfrak{g}_+$, one can easily check that $\sfG$ is continuous in time, namely, if $t_0$ is a Lebesgue instant for $u$, then a direct manipulation verifies that
\[
\lim\limits_{h \to 0}\dashint_{t_0}^{t_0+h}|\sfG(t)-\sfG(t_0)|\d{t}=0. 
\]
Hence there exists $\theta \in E$ such that
\begin{equation}\label{e.boundedness-NT-cor-1}
\int_{B_i \times \{\theta\}}\mathfrak{g}_+(u, k(t))\d{x} \leq  \frac{2}{|I|} \iint_{B_i \times I}\mathfrak{g}_+(u, k(t))\dxt.
\end{equation}
Fix such a choice of $\theta$ once and for all. Notice that the boundary term 
\[
\int_{B_i \times \{\theta\}}\mathfrak{g}_+(u, k(t))\d{x}
\]
appears on the right side of~\eqref{e.bnd-est-1-NT} in place of $\sfI$. Thus, by~\eqref{e.boundedness-NT-cor-1} the rest of the proof is almost a verbatim repetition of the proof of Theorem~\ref{t.boundedness-NT}, while keeping the time interval $I=[\theta,\tau]$. Note also that, in the procedure of the proof, we require the corresponding qualitative local boundedness in place of Proposition~\ref{t.qualitative-bnd-NT}, which is proved similarly using~\eqref{e.boundedness-NT-cor-1} again.
This proves the statement.
\end{proof}

\section{Measure theoretical properties}\label{Sect.4}
In this section, we provide measure theoretical lemmas for~\eqref{e.NT}. Many of these estimates can be straightforwardly obtained by suitably adapting~\cite[Section 4]{CN26}.

\begin{lemma}[De Giorgi type lemma]\label{t.DGtype-NT}
Let $p>1,s \in (0,1)$. Given $z_0=(x_0,t_0) \in \Omega_T$, assume that $Q_{2\rho,  2\tau }(z_0) \subset \mathcal{Q}_{R}\Subset \Omega_T$, where $\tau:=\delta \rho^{sp}$ with an arbitrary parameter $\delta \in (0,1]$. Let $u$ be a weak super-solution to~\eqref{e.NT}-\eqref{e.kernel}, in the sense of Definition~\ref{def-of-NT}, satisfying $u \geq 0$ in $\mathcal{Q}_R$ and let $\eps \in (0,\infty)$. There exists $\nu \in (0,1)$, depending only on $\data$ and $\eps$ such that, if
\[
\left(\frac{\rho}{R}\right)^{\frac{sp\eps}{(p-1)(p-1+\eps)}} \left(\dashint_{I_R(t_0)}\tail\big(u_-(t)\,; B_R(x_0)\big)^{p-1+\eps}\d{t}\right)^{\nicefrac{1}{(p-1+\eps)}} \leq \frac{k}{2}
\]
and
\[
\left|\{u<k\} \cap Q_{2\rho,  2\tau }(z_0)\right| \leq \nu \left|Q_{2\rho,  2\tau }(z_0)\right|
\]
then 
\[
u \geq \frac{k}{2} \quad \textrm{a.e.\,\,in}\,\, \,Q_{\rho, \tau }(z_0).
\]
Moreover,  we can track  $\nu=\nu_0 \delta^q$ for some constants $\nu_0 \in (0,1)$ and $q>1$, both depending only on $\data$ and $\eps$.
\end{lemma}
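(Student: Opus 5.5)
The plan is a standard De Giorgi iteration on shrinking cylinders with decreasing levels, using the time-independent Caccioppoli estimate \eqref{e.caccioppoli3}$_-$ for super-solutions. For $i\in\N_0$ we set
\[
k_i:=\frac{k}{2}+\frac{k}{2^{i+1}},\qquad \rho_i:=\rho+2^{-i}\rho,\qquad \tau_i:=\tau+2^{-i}\tau,\qquad Q_i:=Q_{\rho_i,\tau_i}(z_0),
\]
together with intermediate cylinders $\widetilde Q_i$ between $Q_{i+1}$ and $Q_i$. Writing $w_i:=(u-k_i)_-\le k$ (valid because $u\ge0$ in $\cQ_R$), the quantity to iterate is $\sfY_i:=|\{u<k_i\}\cap Q_i|/|Q_i|$, and the goal is to show that $\sfY_i\to0$, which gives $u\ge k/2$ in $Q_{\rho,\tau}$.

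\emph{Energy step.} Apply \eqref{e.caccioppoli3}$_-$ on the pair $\widetilde Q_i\subset Q_i$ at level $k_i$. By Lemma~\ref{t.g}, on $\{u<k_i\}$ we have $\mathfrak g_-(u,k_i)\approx (u+k_i)^{p-2}(u-k_i)_-^2$. Since $0\le u< k_i\le k$, this term is bounded above by $C k^p$ for all $p$. From below it is at least $c\,k^{p-2}w_i^2$ when $p<2$ and at least $c\,w_i^p$ when $p\ge2$. On the right side, the parabolic term gives $\frac{C2^{i}}{\tau}k^p|A_i|$ with $A_i:=\{u<k_i\}\cap Q_i$, and the gradient-type term gives $C2^{ip}\rho^{-sp}k^p|A_i|$. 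Since $\tau=\delta\rho^{sp}$ and $\delta\le1$, both are at most $C2^{ip}\delta^{-1}\rho^{-sp}k^p|A_i|$.

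\emph{Tail term.} This is the delicate part, and where the $\eps$-integrability of the tail enters. Split $\R^d\setminus B_{\rho_i}$ into $B_R\setminus B_{\rho_i}$ and $\R^d\setminus B_R$. On $B_R$ we have $u\ge0$, so $w_i\le k$ and the contribution is bounded by $C2^{i(d+sp)}\rho^{-sp}k^{p-1}$. Outside $B_R$ we have $w_i\le k+u_-$. Integrating in time over $(t_0-2\tau,t_0)$ and applying H\"older with exponents $\frac{p-1+\eps}{p-1}$ and $\frac{p-1+\eps}{\eps}$ yields a bound of the form
\[
C2^{i(d+sp)}\rho^{-sp}\Big(\frac{\rho}{R}\Big)^{sp}\,k\,|A_i|^{\frac{\eps}{p-1+\eps}}\,R^{sp\frac{p-1}{p-1+\eps}}\Big(\dashint_{I_R}\tail(u_-;B_R)^{p-1+\eps}\Big)^{\frac{p-1}{p-1+\eps}}\cdots.
\]
The hypothesis on the tail is designed so that this contribution is at most $C2^{ic}\tau^{-1}k^p|A_i|^{\frac{\eps}{p-1+\eps}}|Q_i|^{\frac{p-1}{p-1+\eps}}$, up to powers of $\delta$. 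I expect the main obstacle to be checking the exponents $\frac{sp\eps}{(p-1)(p-1+\eps)}$ precisely. The key point is that the power of $|A_i|$ is smaller than $1$, which is what lowers the final exponent $1+\beta$ and forces $\nu$ to depend on $\eps$.

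\emph{Sobolev and recursion.} Apply Proposition~\ref{FS} to $\varphi w_i$, with $\varphi$ a cutoff between $Q_{i+1}$ and $\widetilde Q_i$. Use the Chebyshev bound $(k_i-k_{i+1})^{\kappa p}|A_{i+1}|\le\iint (\varphi w_i)^{\kappa p}$, where $k_i-k_{i+1}=k2^{-i-2}$, and convert the sup-in-time energy into $\sup_t\dashint w_i^p$. For $p<2$ first use H\"older as in Step 2 of the proof of Theorem~\ref{t.boundedness-NT}, pairing $(u+k)^{p-2}w^2$ against $k^p$. The powers of $k$ then cancel and we obtain
\[
\sfY_{i+1}\le C\delta^{-a}\sfb^i\,\sfY_i^{1+\beta},
\]
with $\beta>0$ depending on $\kappa_\ast$, $p$ and $\eps$ (coming from the smaller of the exponents $\kappa-1$ and the $\frac{\eps}{p-1+\eps}$ correction), and $a$ depending on $\data$ and $\eps$. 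Lemma~\ref{t.FGC} then gives $\sfY_i\to0$ as soon as $\sfY_0\le \nu:=\nu_0\delta^{q}$ with $q=a/\beta$, possibly enlarged so that $q>1$. This yields both the conclusion and the claimed form of $\nu$.
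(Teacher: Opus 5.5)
Your proposal has a genuine gap in the final recursion. Once the tail is crushed to a power of $|A_i|$, the iteration for $\sfY_i$ alone does not close unless $\eps$ is large.

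Put $\theta:=\frac{\eps}{p-1+\eps}\in(0,1)$. Your Hölder bound is correct, and it makes the right-hand side of the energy estimate of order $2^{ci}\delta^{-1}\rho^{-sp}k^p|Q_i|\,(\sfY_i+\sfY_i^{\theta})\approx 2^{ci}\delta^{-1}\rho^{-sp}k^p|Q_i|\,\sfY_i^{\theta}$. In Proposition~\ref{FS} both the Gagliardo factor and the $\sup_t$ factor inherit this power. The $\sup_t$ factor enters to the power $\kappa-1$ if $p\ge 2$, and effectively $\frac p2(\kappa-1)$ if $p<2$. After Chebyshev you therefore get
\[
\sfY_{i+1}\le C\delta^{-a}\sfb^{i}\,\sfY_i^{\kappa\theta}\quad (p\ge 2),\qquad \sfY_{i+1}\le C\delta^{-a}\sfb^{i}\,\sfY_i^{\theta(1+\frac p2(\kappa-1))}\quad (1<p<2).
\]
The Sobolev gain and the Hölder loss multiply. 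They do not produce $1+\beta$ with $\beta$ ``the smaller of $\kappa-1$ and the $\eps$-correction''. For $p\ge2$ one has $\kappa\theta>1$ iff $\eps>\frac{p-1}{\kappa-1}$, i.e.\ $\eps>\frac{d(p-1)}{sp}$ when $sp<d$ and $\eps>2(p-1)$ when $sp\ge d$. Below this threshold Lemma~\ref{t.FGC} is unusable for every choice of $\nu$.

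This cannot be fixed within a single sequence. The bound $\sfY_i^{\theta}$ is sharp in terms of $\sfY_i$ alone, because the tail may concentrate in time exactly on the instants where $|A_i(t)|$ is large. The tail hypothesis is scale-calibrated, so it provides no small factor to absorb the loss.

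The missing idea, which is what the paper uses, is to keep the time structure of the tail contribution. That contribution is $k^p\rho^{\frac{(p-1)sp}{p-1+\eps}}\big(\int_{T_i}|A_i(t)|^{\frac{p-1+\eps}{\eps}}\d t\big)^{\frac{\eps}{p-1+\eps}}$, and one iterates a second quantity
\[
\sfZ_i:=\Big[\dashint_{T_i}\big(|A_i(t)|/|B_i|\big)^{\frac{p-1+\eps}{\eps}}\d t\Big]^{\frac{\eps}{(1+\lambda)(p-1+\eps)}}
\]
alongside $\sfY_i$. Your scheme then yields \eqref{e.dg-est-NT-h}. The quantity $\sfZ_{i+1}$ is estimated by Chebyshev in $L^{p(1+\lambda)}_x$ together with the parabolic Gagliardo--Nirenberg inequality (Lemma~\ref{t.GN}), taking $\widetilde q=p(1+\lambda)$, $\widetilde r=\frac{p(1+\lambda)(p-1+\eps)}{\eps}$ and $\lambda=\frac{\eps sp}{d(p-1+\eps)}$. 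This gives the linear bound \eqref{e.dg-est-NT-k}.

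Consequently $\sfK_i:=\sfY_i+\sfZ_i^{1+\lambda}$ obeys $\sfK_{i+1}\le C\delta^{-\gamma(1+\lambda)}\sfb^{(1+\lambda)i}\sfK_i^{1+(\beta\wedge\lambda)}$ for every $\eps>0$. The initial smallness follows from $\sfZ_0^{1+\lambda}\le\sfY_0^{\theta}$, which holds since $|A_0(t)|\le|B_0|$. The factor $\delta^{-\gamma(1+\lambda)}$ is what produces $\nu=\nu_0\delta^q$.

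The rest of your plan agrees with the paper: the choice of levels, the use of $u\ge0$ in $\cQ_R$ to bound $\mathfrak g_-$ and the near tail, the Hölder pairing for $p<2$, and the cancellation of the powers of $k$.
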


\begin{proof}
The proof is a modification of~\cite[Lemma 4.1]{CN26}. While our approach may appear similar to~\cite{SZ25} at first glance, it differs in several subtle but crucial aspects. 

\emph{Step 1: Set up for the iteration.} We begin with the setup for De Giorgi's iteration. Define a shrinking family of cylinders $Q_i:=Q_{\rho_i,\tau_i}:=B_i \times T_i$ for $i \in \N_0$, where $\rho_i$ and $\tau_i$ are given by
\[
\rho_i:=(1+2^{-i})\rho, \quad \tau_i:=\delta (1+2^{-i})\rho^{sp}
\]
with an arbitrary parameter $\delta \in (0,1]$ and we henceforth  write $B_i:= B_{\rho_i}$ and $T_i := (-\tau_i,0]$ for short. Moreover, define
\[
\begin{cases}
\widetilde{\rho}_i:=\dfrac{1}{2}(\rho_i+\rho_{i+1}), \quad \widetilde{\tau}_i:=\dfrac{1}{2}(\tau_i+\tau_{i+1}), \\[4pt]
\widetilde{B}_i:=B_{\widetilde{\rho}_i},  \quad \widetilde{T}_i:=(-\widetilde{\tau}_i,0]\quad \mbox{and}\quad \widetilde{Q}_i:=\widetilde{B}_i\times \widetilde{T}_i.
\end{cases}
\]
Given $k \in [0,\infty)$, we define decreasing sequences of levels
\[
k_i:=\frac{k}{2}+\frac{k}{2^{i+1}}, \quad \widetilde{k}_i:=\frac{1}{2}(k_i+k_{i+1}).
\]
In addition, it is convenient to introduce the notation
\[
A_i:=\{u <k_i\} \cap Q_i \quad \mbox{and}\quad \widetilde{A}_i:=\{u <\widetilde{k}_i\} \cap Q_i
\]
and therefore, clearly, $\widetilde{A}_i \subset A_i$. Moreover, define the time-slice of $A_i$ as
\[
A_i(t):=\left\{u(\cdot, t) <k_i \right\} \cap B_i \quad \mbox{for} \quad t \in T_i.
\]
\emph{Step 2: Carry out the iterative procedure.} Let $\varphi \in C^\infty(\R^{d+1} ; [0,1])$ be a cutoff function to satisfy
\[
\1_{Q_{i+1}} \leq \varphi \leq \1_{Q_i}, \quad |\nabla \varphi| \leq 2^{i+4}/\rho, \quad \supp(\varphi) \subset \widetilde{Q}_i. 
\]
An application of Proposition~\ref{FS} with $\mathsf{d}=2^{-i-4}$ and H\"{o}lder's inequality imply that
\begin{align}\label{e.dg-est-NT-a}
\frac{k}{2^{i+3}}\left|A_{i+1}\right|
&\leq \iint_{\widetilde{Q}_i} \widetilde{w}_i\varphi\dxt \notag\\
&\leq \left[\iint_{\widetilde{Q}_i} (\widetilde{w}_i\varphi)^{p \kappa}\dxt\right]^{\nicefrac{1}{p\kappa}}\left|A_i\right|^{1-\nicefrac{1}{p\kappa}} \notag\\
&\leq C \Bigg[\rho^{sp} \int_{\widetilde{T}_i}\int_{\widetilde{B}_i}\int_{\widetilde{B}_i} \frac{\left|\varphi \widetilde{w}_i(x,t)-\varphi \widetilde{w}_i(y,t)\right|^p}{|x-y|^{d+sp}}\dxyt \notag\\
&\quad \quad \quad +2^{i(d+sp)}\iint_{\widetilde{Q}_i}(\varphi \widetilde{w}_i)^p\dxt\Bigg]^{\nicefrac{1}{p\kappa}}\left[\sup_{t _\in \widetilde{T}_i}\dashint_{\widetilde{B}_i}\widetilde{w}_i^p(t)\d{x} \right]^{\frac{\kappa_\ast-1}{p\kappa \kappa_\ast}}\left|A_i\right|^{1-\nicefrac{1}{p\kappa}} 
\end{align}
with $\kappa=1+\frac{\kappa_\ast-1}{\kappa_\ast}$. Here, to lighten the notation, we denoted $\widetilde{w}_i:=(u-\widetilde{k}_i)_-$. 

We split the estimate of the integral average on the right side into $p \geq 2$ and $p \in (1,2)$. 

In the case $p \geq 2$, using Lemma~\ref{t.g}, we have
\begin{align*}
\sup_{t _\in \widetilde{T}_i}\dashint_{\widetilde{B}_i}\widetilde{w}_i^p(t)\d{x}  &\leq \sup_{t _\in \widetilde{T}_i}\dashint_{\widetilde{B}_i}\Big(|u|+|\widetilde{k}_i|\Big)^{p-2}(u-\widetilde{k}_i)_-^2\d{x}\\
&\leq \frac{1}{c_1(p)|\widetilde{B}_i|}\int_{\widetilde{B}_i \times \{t\}}\mathfrak{g}_-(u,\widetilde{k}_i)\d{x}.
\end{align*}
We next consider the case $p \in (1,2)$. Using the triangle inequality and H\"{o}lder's inequality with $\left(\nicefrac{2}{p}, \nicefrac{2}{(2-p)}\right)$ and appealing to Lemma~\ref{t.g} and~\eqref{e.bnd-est-4-NT} yields, we have
\begin{align*}
\sup_{t \in (-\widetilde{\theta}_i,0]}\,&\dashint_{\widetilde{B}_i \times \{t\}}\tilde{w}_{i}^p\d{x} \\[4pt]
&=\sup_{t \in (-\widetilde{\theta}_i,0]}\dashint_{\widetilde{B}_i \times \{t\}} \Big(|u|+|\widetilde{k}_i|\Big)^{\nicefrac{p(p-2)}{2}}(u-\widetilde{k}_i)_-^p \cdot \Big(|u|+|\widetilde{k}_i|\Big)^{\nicefrac{p(2-p)}{2}}\d{x} \\[4pt]
&\leq \left[\sup_{t \in (-\widetilde{\theta}_i,0] }\dashint_{\widetilde{B}_i \times \{t\}}\Big(|u|+|\widetilde{k}_i|\Big)^{p-2}(u-\widetilde{k}_i)_-^2\d{x} \right]^{\nicefrac{p}{2}} \\
&\quad \quad \quad \quad \times \left[\sup_{t \in (-\widetilde{\theta}_i,0] } \dashint_{\widetilde{B}_i \times \{t\}}\Big(|u|+|\widetilde{k}_i|\Big)^p \1_{\{0 \leq u(\cdot,t) \leq \widetilde{k}_i\}}\d{x} \right]^{\nicefrac{(2-p)}{2}}\\[4pt]
&\leq \frac{2^{\nicefrac{p(2-p)}{2}}}{c_1^{\nicefrac{p}{2}}}\left[\frac{1}{|\widetilde{B}_i|}\sup_{t \in (-\widetilde{\theta}_i,0] }\int_{\widetilde{B}_i \times \{t\}}\mathfrak{g}_-(u,\widetilde{k}_i)\d{x} \right]^{\nicefrac{p}{2}}k^{\nicefrac{p(2-p)}{2}}
\end{align*}
with $C(\data)<\infty$ in the case $1<p<2$.
As a result, we have obtained, using~\eqref{e.caccioppoli3}$_-$, that
\begin{align}\label{e.dg-est-NT-b}
\sup_{t \in (-\widetilde{\theta}_i,0]}\,&\dashint_{\widetilde{B}_i \times \{t\}}\tilde{w}_{i}^p\d{x} \notag\\
&\leq C\frac{k^{\frac{p(2-p)_+}{2}}}{|\widetilde{B}_i|^{\frac{p \wedge 2}{2}}}\left[\sup_{t \in (-\widetilde{\theta}_i,0] }\int_{\widetilde{B}_i \times \{t\}}\mathfrak{g}_-(u,\widetilde{k}_i)\d{x} \right]^{\frac{p \wedge 2}{2}} \notag\\
&\!\!\!\!\stackrel{\eqref{e.caccioppoli3}_-}{\leq} C\frac{k^{\frac{p(2-p)_+}{2}}}{|\widetilde{B}_i|^{\frac{p \wedge 2}{2}}}\left[\frac{1}{\tau_i-\widetilde{\tau}_i}\iint_{Q_i}\mathfrak{g}_-(u,\widetilde{k}_i)\dxt+\frac{\rho_i^{(1-s)p}}{(\rho_i-\widetilde{\rho}_i)^p}\iint_{Q_i}\widetilde{w}_i^p\dxt \right. \notag\\
&\left.  \quad \quad \quad \quad \quad  \quad \quad +\frac{\rho_i^{d}}{(\rho_i-\widetilde{\rho}_i)^{d+sp}}\iint_{Q_i}\widetilde{w}_i\d{x}\cdot \tail\big(\widetilde{w}_i(t) ; B_i \big)^{p-1}\d{t}\right]^{\frac{p \wedge 2}{2}} \notag \\
&\leq C\frac{k^{\frac{p(2-p)_+}{2}}}{|\widetilde{B}_i|^{\frac{p \wedge 2}{2}}}2^{i{\frac{p(p \wedge 2)}{2}}}\rho^{-{\frac{sp(p \wedge 2)}{2}}}\big[ \sfG + \sfH + \sfI \big]^{\frac{p \wedge 2}{2}},
\end{align}
where
\begin{align*}
\sfG&:=\frac{1}{\delta}\iint_{Q_i}\mathfrak{g}_-(u,\widetilde{k}_i)\dxt, \quad \sfH:=\iint_{Q_i}\widetilde{w}_i^p\dxt \quad \mbox{and}\\
\sfI&:=\iint_{Q_i}\widetilde{w}_i\d{x}\cdot \tail\big(\widetilde{w}_i(t) ; B_i \big)^{p-1}\d{t}.
\end{align*}
To obtain the last line from the penultimate one, we used 
\begin{equation}\label{e.dg-est-NT-computation}
\frac{\rho_i^{(1-s)p}}{(\rho_i-\widetilde{\rho}_i)^p}, \,\,\frac{\rho_i^{d}}{(\rho_i-\widetilde{\rho}_i)^{d+sp}} \leq \frac{c(d,s,p)2^{ip}}{\rho^{sp}}, \quad \frac{1}{\tau_i-\widetilde{\tau}_i} \leq \frac{c(d,s,p)2^{ip}}{\delta \rho^{sp}}.
\end{equation}
On the other hand, following the argument leading to~\cite[Lemma 4.1, (4.4)]{CN26} we are able to estimate
\begin{equation}\label{e.dg-est-NT-c}
\rho^{sp} \int_{\widetilde{T}_i}\int_{\widetilde{B}_i}\int_{\widetilde{B}_i} \frac{\left|\varphi \widetilde{w}_i(x,t)-\varphi \widetilde{w}_i(y,t)\right|^p}{|x-y|^{d+sp}}\dxt \leq C2^{ip}\big[ \sfG + \sfH + \sfI \big].
\end{equation}
for a constant $C(\data)<\infty$. Inserting these bounds~\eqref{e.dg-est-NT-b} and~\eqref{e.dg-est-NT-c} into the display~\eqref{e.dg-est-NT-a} yields
\begin{align}\label{e.dg-est-NT-d}
\frac{k}{2^{i+3}}\left|A_{i+1}\right| &\leq C\Big[2^{i(d+p)}\big[ \sfG + \sfH + \sfI \big] \Big]^{\frac{1}{p\kappa}}\left[\frac{k^{\frac{p(2-p)_+}{2}}}{|\widetilde{B}_i|^{\frac{p \wedge 2}{2}}}2^{i{\frac{p(p \wedge 2)}{2}}}\rho^{-{\frac{sp(p \wedge 2)}{2}}}\big[ \sfG + \sfH + \sfI \big]^{\frac{p \wedge 2}{2}}\right]^{\frac{\kappa_\ast-1}{p\kappa \kappa_\ast}}\left|A_i\right|^{1-\frac{1}{p\kappa}} \notag\\
&\leq  C2^{i\left(\frac{d+p}{p\kappa}+\frac{p \wedge 2}{2}\cdot \frac{\kappa_\ast-1}{\kappa \kappa_\ast}\right)}\left[\frac{k^{\frac{p(2-p)_+}{2}}}{|B_i|^{\frac{p \wedge 2}{2}}}\right]^{\frac{\kappa_\ast-1}{p\kappa \kappa_\ast}}\frac{\big[ \sfG + \sfH + \sfI \big]^{\frac{1}{p\kappa}+\frac{p \wedge 2}{2}\cdot \frac{\kappa_\ast-1}{p\kappa \kappa_\ast}}}{\rho^{\frac{s(p \wedge 2)}{2}\cdot \frac{\kappa_\ast-1}{\kappa \kappa_\ast}}}\left|A_i\right|^{1-\frac{1}{p\kappa}},
\end{align}
where in the last line we used $|B_i|/|\widetilde{B}_i| \leq c(d)$.
%
%
%

%
%
In order to estimate $\sfG$ on the right side of~\eqref{e.dg-est-NT-d}, we need to consider two cases: $p \geq 2$ and $p \in (1,2)$. When considering the case $p \geq 2$, by Lemma~\ref{t.g} and the fact that $0\leq u \leq \widetilde{k}_i \leq k$ on $\widetilde{A}_i$, we get
\begin{align*}
\sfG &\leq \frac{C}{\delta} \iint_{Q_i} \left(|u|+|\widetilde{k}_i|\right)^{p-2}(u-\widetilde{k}_i)_-^2 \dxt\\
&\leq \frac{C}{\delta} k^{p-2}  \iint_{Q_i} (u-\widetilde{k}_i)_-^2 \dxt\leq \frac{C}{\delta} k^{p}|A_i|. 
\end{align*}
with $C(p)<\infty$. On the other hand, in the case $1<p<2$ using $|u|+|\widetilde{k}_i| \geq (u-\widetilde{k}_i)_-$ implies that
\[
\sfG \leq \frac{C}{\delta} \iint_{Q_i}(u-\widetilde{k}_i)_-^p \dxt \leq \frac{C}{\delta} k^{p}|A_i|.
\]
In all cases, we have shown that
\begin{equation}\label{e.dg-est-NT-e} 
\sfG \leq \frac{C}{\delta} k^p|A_i|.
\end{equation}
We straightforwardly compute that
\begin{equation}\label{e.dg-est-NT-f}
\sfH \leq Ck^p|A_i|,
\end{equation}
while, a routine decomposing of the tail term into integrals on annuli yields
\[
\tail\big(\widetilde{w}(t)\,;B_i\big)^{p-1} \leq ck^{p-1}+c\left(\frac{\rho}{R}\right)^{sp}\tail\big((u(t)-k)_-\,;B_R\big)^{p-1}
\]
with $c(d,s,p)<\infty$, and therefore the H\"{o}lder inequality with powers $\left(\frac{p-1+\eps}{p-1}, \frac{p-1+\eps}{\eps}\right)$ implies
\begin{align*}
\sfI &\leq ck^{p-1}\iint_{Q_i}\widetilde{w}_i\dxt +c\left(\frac{\rho}{R}\right)^{sp} \iint_{Q_i}\widetilde{w}_i\cdot \tail\big((u(t)-k)_-\,;B_R\big)^{p-1}\d{t} \notag\\
&\leq ck^p\left|A_i\right|+c\rho^{\frac{(p-1)sp}{p-1+\eps}}k \left(\frac{\rho}{R}\right)^{\frac{\eps sp}{p-1+\eps}}\left(\dashint_{I_R}\tail\big(u_-(t)\,;B_R\big)^{p-1+\eps}\d{t} \right)^{\frac{p-1}{p-1+\eps}} \notag\\
&\quad \quad \quad \quad \quad \quad \quad \quad \quad \quad \times \left(\int_{T_i}|A_i(t)|^{\frac{p-1+\eps}{\eps}}\d{t}\right)^{\frac{\eps}{p-1+\eps}},
\end{align*}
where to obtain the last line we used
\[
\left(\frac{\rho}{R}\right)^{sp} R^{\frac{(p-1)sp}{p-1+\eps}}=\rho^{\frac{(p-1)sp}{p-1+\eps}}\left(\frac{\rho}{R}\right)^{\frac{\eps sp}{p-1+\eps}}.
\]
Now, if we enforce \[
\left(\frac{\rho}{R}\right)^{\frac{sp\eps}{(p-1)(p-1+\eps)}} \left(\dashint_{I_R}\tail \big(u_-(t)\,; B_R\big)^{p-1+\eps}\d{t}\right)^{\nicefrac{1}{(p-1+\eps)}} \leq \frac{k}{2}
\]
then the above last estimate turns out that
\begin{equation}\label{e.dg-est-NT-g}
\sfI \leq ck^p\left|A_i\right|+c\rho^{\frac{(p-1)sp}{p-1+\eps}}k^p\left(\int_{T_i}|A_i(t)|^{\frac{p-1+\eps}{\eps}}\d{t}\right)^{\frac{\eps}{p-1+\eps}}
\end{equation}
for a constant $c(d,s,p)<\infty$. Combining the preceding estimates~\eqref{e.dg-est-NT-e}--\eqref{e.dg-est-NT-g} yields
\begin{align*}
k\left|A_{i+1}\right| &\leq C \sfb^i \left[\frac{k^{\frac{p(2-p)_+}{2}}}{|B_i|^{\frac{p \wedge 2}{2}}}\right]^{\frac{\kappa_\ast-1}{p\kappa \kappa_\ast}}k^{\frac{1}{\kappa}+\frac{p \wedge 2}{2}\cdot \frac{\kappa_\ast-1}{\kappa \kappa_\ast}}\frac{\left|A_i\right|^{1-\frac{1}{p\kappa}}}{\delta^{\frac{p \wedge 2}{2}\cdot \frac{\kappa_\ast-1}{\kappa \kappa_\ast}}\rho^{\frac{s(p \wedge 2)}{2}\cdot \frac{\kappa_\ast-1}{\kappa \kappa_\ast}}} \\
&\quad \quad \times \left[\left|A_i\right|+\rho^{\frac{(p-1)sp}{p-1+\eps}}\left(\int_{T_i}|A_i(t)|^{\frac{p-1+\eps}{\eps}}\d{t}\right)^{\frac{\eps}{p-1+\eps}}\right]^{\frac{1}{p\kappa}+\frac{p \wedge 2}{2}\cdot \frac{\kappa_\ast-1}{p\kappa \kappa_\ast}}\\
&\leq C \sfb^i \frac{1}{|B_i|^{\frac{p \wedge 2}{2}\cdot \frac{\kappa_\ast-1}{p\kappa \kappa_\ast}}}\frac{k^{\frac{(2-p)_+}{2}\cdot \frac{\kappa_\ast-1}{\kappa \kappa_\ast}+\frac{1}{\kappa}+\frac{p \wedge 2}{2}\cdot \frac{\kappa_\ast-1}{\kappa \kappa_\ast}}}{\delta^{\frac{p \wedge 2}{2}\cdot \frac{\kappa_\ast-1}{\kappa \kappa_\ast}}\rho^{\frac{s(p \wedge 2)}{2}\cdot \frac{\kappa_\ast-1}{\kappa \kappa_\ast}}} \\
&\quad \quad \times \left[\left|A_i\right|^{1+\frac{p \wedge 2}{2}\cdot \frac{\kappa_\ast-1}{p\kappa \kappa_\ast}}+\left|A_i\right|^{1-\frac{1}{p\kappa}} \left[\rho^{\frac{(p-1)sp}{p-1+\eps}}\left(\int_{T_i}|A_i(t)|^{\frac{p-1+\eps}{\eps}}\d{t}\right)^{\frac{\eps}{p-1+\eps}}\right]^{\frac{1}{p\kappa}+\frac{p \wedge 2}{2}\cdot \frac{\kappa_\ast-1}{p\kappa \kappa_\ast}}\right]\\
&=C \sfb^i \frac{1}{|B_i|^{\frac{p \wedge 2}{2}\cdot \frac{\kappa_\ast-1}{p\kappa \kappa_\ast}}}\frac{k}{\delta^{\frac{p \wedge 2}{2}\cdot \frac{\kappa_\ast-1}{\kappa \kappa_\ast}}\rho^{\frac{s(p \wedge 2)}{2}\cdot \frac{\kappa_\ast-1}{\kappa \kappa_\ast}}} \\
&\quad \quad \times \left[\left|A_i\right|^{1+\frac{p \wedge 2}{2}\cdot \frac{\kappa_\ast-1}{p\kappa \kappa_\ast}}+\left|A_i\right|^{1-\frac{1}{p\kappa}} \left[\rho^{\frac{(p-1)sp}{p-1+\eps}}\left(\int_{T_i}|A_i(t)|^{\frac{p-1+\eps}{\eps}}\d{t}\right)^{\frac{\eps}{p-1+\eps}}\right]^{\frac{1}{p\kappa}+\frac{p \wedge 2}{2}\cdot \frac{\kappa_\ast-1}{p\kappa \kappa_\ast}}\right]
\end{align*}
where $\sfb:=2^{\frac{d+p}{p\kappa}+\frac{p \wedge 2}{2}\cdot \frac{\kappa_\ast-1}{\kappa \kappa_\ast}+1+(2-p)_++p}>1$. Here to obtain the last line from the penultimate one, using the definition of $\kappa$ given by Lemma~\ref{FS}, the power of $k$ was computed as
\[
\frac{(2-p)_+}{2}\cdot \frac{\kappa_\ast-1}{\kappa \kappa_\ast}+\frac{1}{\kappa}+\frac{p \wedge 2}{2}\cdot \frac{\kappa_\ast-1}{\kappa \kappa_\ast}=1.
\]
Dividing the last display by $|Q_{i+1}|$ yields that
\begin{align*}
\frac{\left|A_{i+1}\right|}{\left|Q_{i+1}\right|} &\leq C \sfb^i \frac{\left|Q_i\right|^{1+\frac{p \wedge 2}{2}\cdot \frac{\kappa_\ast-1}{p\kappa \kappa_\ast}}}{|B_i|^{\frac{p \wedge 2}{2}\cdot \frac{\kappa_\ast-1}{p\kappa \kappa_\ast}}\rho^{\frac{s(p \wedge 2)}{2}\cdot \frac{\kappa_\ast-1}{\kappa \kappa_\ast}}\left|Q_{i+1}\right|}\frac{1}{\delta^{\frac{p \wedge 2}{2}\cdot \frac{\kappa_\ast-1}{\kappa \kappa_\ast}}} \\
&\quad \times \left[\left(\frac{\left|A_i\right|}{|Q_i|}\right)^{1+\frac{p \wedge 2}{2}\cdot \frac{\kappa_\ast-1}{p\kappa \kappa_\ast}}+\left(\frac{\left|A_i\right|}{|Q_i|}\right)^{1-\frac{1}{p\kappa}}\left[\frac{\rho^{\frac{(p-1)sp}{p-1+\eps}}}{\left|Q_i\right|}\left(\int_{T_i}|A_i(t)|^{\frac{p-1+\eps}{\eps}}\d{t}\right)^{\frac{\eps}{p-1+\eps}}\right]^{\frac{1}{p\kappa}+\frac{p \wedge 2}{2}\cdot \frac{\kappa_\ast-1}{p\kappa \kappa_\ast}}\right].
\end{align*}
It is straightforward to check that
\[
\frac{\left|Q_i\right|^{1+\frac{p \wedge 2}{2}\cdot \frac{\kappa_\ast-1}{p\kappa \kappa_\ast}}}{|B_i|^{\frac{p \wedge 2}{2}\cdot \frac{\kappa_\ast-1}{p\kappa \kappa_\ast}}\rho^{\frac{s(p \wedge 2)}{2}\cdot \frac{\kappa_\ast-1}{\kappa \kappa_\ast}}\left|Q_{i+1}\right|} =\frac{\left|Q_i\right|}{\left|Q_{i+1}\right|}\frac{\left|T_i\right|^{\frac{p \wedge 2}{2}\cdot \frac{\kappa_\ast-1}{p\kappa \kappa_\ast}}}{\rho^{\frac{s(p \wedge 2)}{2}\cdot \frac{\kappa_\ast-1}{\kappa \kappa_\ast}}} \leq C(d)\delta^{\frac{p \wedge 2}{2}\cdot \frac{\kappa_\ast-1}{p\kappa \kappa_\ast}}
\]
and
\[
\frac{\rho^{\frac{(p-1)sp}{p-1+\eps}}}{\left|Q_i\right|}\left(\int_{T_i}|A_i(t)|^{\frac{p-1+\eps}{\eps}}\d{t}\right)^{\frac{\eps}{p-1+\eps}} \leq C(s,p,\eps) \left[\int_{T_i} \left(\frac{|A_i(t)|}{|B_i|}\right)^{\frac{p-1+\eps}{\eps}}\d{t}\right]^{\frac{\eps}{p-1+\eps}}.
\]
Moreover, we set
\begin{align*}
\sfY_i:=\frac{|A_i|}{|Q_i|} \leq 1, \quad \mbox{and} \quad \sfZ_i:=\left[\dashint_{T_i} \left(\frac{|A_i(t)|}{|B_i|}\right)^{\frac{p-1+\eps}{\eps}}\d{t}\right]^{\frac{\eps}{(1+\lambda)(p-1+\eps)}} \leq 1.
\end{align*}
Combining these, we obtain
\begin{equation}\label{e.dg-est-NT-h}
\sfY_{i+1} \leq C \sfb^i\left(\sfY_{i}^{1+\beta}+\sfY_{i}^{\beta}\sfZ_i^{1+\lambda}\right), \quad \forall i \in \N_0 
\end{equation}
where $\beta:=\frac{p \wedge 2}{2}\cdot \frac{\kappa_\ast-1}{p\kappa \kappa_\ast}$ for short, and the parameter $\lambda \in (0,\infty)$ is still to be specified. Here we used $\beta<1-\frac{1}{p\kappa}$ and thus, $\sfY_i^{1-\frac{1}{p\kappa}} \leq \sfY_i^{\beta}$.
\medskip

\emph{Step 3: Run the iteration scheme.}  Next, our task is to deduce the recursive inequality for $\{\sfZ_i\}_{i \in\N_0}$. It is straightforward to check  that
\begin{align}\label{e.dg-est-NT-i}
\left|A_{i+1}(t)\right|&\leq \int_{A_{i+1}(t)}\left(\frac{\widetilde{k}_i-u}{\widetilde{k}_i-k_{i+1}}\right)^{p(1+\lambda)}\d{x} \notag\\
&\leq \left(\frac{2^{i+3}}{k}\right)^{p(1+\lambda)}\int_{B_{i+1}}(u-\widetilde{k}_i)_-^{p(1+\lambda)}\d{x}.
\end{align}
We appeal to Lemma~\ref{t.GN} with
\[
\widetilde{q}=p(1+\lambda), \quad \widetilde{r}=\frac{sp^2(1+\lambda)}{d\lambda}, \quad \mbox{and} \quad m=p,
\]
while we select the free parameter $\lambda>0$ so that
\[
\widetilde{r}=\frac{p(1+\lambda)(p-1+\eps)}{\eps} \quad \iff \quad \lambda=\frac{\eps sp}{d(p-1+\eps)}.
\]
Note that these selections yield that
\[
\frac{d\widetilde{q}}{sp\widetilde{q}+d\widetilde{r}}=\frac{1}{\widetilde{r}}\cdot\frac{d\widetilde{q}\widetilde{r}}{sp\widetilde{q}+d\widetilde{r}}=\frac{\eps} {p(1+\lambda)(p-1+\eps)}\cdot p=\frac{\eps} {(1+\lambda)(p-1+\eps)}.
\]
This, together with~\eqref{e.dg-est-NT-i}, the H\"{o}lder inequality and~\eqref{e.dg-est-NT-b}, yields that
\begin{align}\label{e.dg-est-NT-j}
\sfZ_{i+1}&=\left[\frac{1}{|B_{i+1}|^{\frac{p-1+\eps}{\eps}}|T_{i+1}|}\int_{T_{i+1}}|A_{i+1}(t)|^{\frac{p-1+\eps}{\eps}}\d{t}\right]^{\frac{\eps}{(1+\lambda)(p-1+\eps)}} \notag\\[4pt]
&\!\!\!\!\stackrel{\eqref{e.dg-est-NT-i}}{\leq} \left[\frac{1}{|B_{i+1}|^{\frac{p-1+\eps}{\eps}}|T_{i+1}|}\left(\frac{2^{i+3}}{k}\right)^{\frac{p(1+\lambda)(p-1+\eps)}{\eps}}\int_{T_{i+1}}\left\|(u-\widetilde{k}_i)_-\right\|_{L^{p(1+\lambda)}(B_{i+1})}^{\frac{p(1+\lambda)(p-1+\eps)}{\eps}}\d{t}\right]^{\frac{\eps}{(1+\lambda)(p-1+\eps)}}\notag\\[4pt]
&\!\!\leq C \frac{2^{ip}}{k^p} \left(\frac{1}{|B_{i+1}|^{\frac{p-1+\eps}{\eps}}|T_{i+1}|}\right)^{\frac{\eps}{(1+\lambda)(p-1+\eps)}}\left[\int_{\widetilde{T}_i}[(u-\widetilde{k}_i)_-]_{W^{s,p}(\widetilde{B}_i)}^p\d{t}\right. \notag\\[4pt]
&\left. \quad \quad \quad  \quad \quad \quad  \quad \quad \quad \quad\quad  +(\widetilde{\rho}_i)^{-sp}\left\|(u-\widetilde{k}_i)_-\right\|_{L^p(\widetilde{Q}_i)}^{p} +\sup_{t_ \in \widetilde{T}_i}\left\|(u-\widetilde{k}_i)_-\right\|_{L^{p}(\widetilde{B}_i)}^{p}\right].
\end{align}
We now consider two cases. When $p \geq 2$, using Lemma~\ref{t.g} and $k_i>\widetilde{k}_i$, we have
\begin{align*}
\mathfrak{g}_-(u,k_i) &\geq c_1(p) \Big(|u|+|k_i|\Big)^{p-2}(u-k_i)_-^2\geq c_1(p)(u-\widetilde{k}_i)_-^p.
\end{align*}
When dealing with $1<p<2$, since $0 \leq u \leq k_i \leq k$ on $\widetilde{A}_i$ it holds that $|u|+|k_i| \leq 2k$, and therefore we carefully estimate, using Lemma~\ref{t.g} and $k_i>\widetilde{k}_i$ again, that
\begin{align*}
\mathfrak{g}_-(u,k_i) &\geq c_1(p) \Big(|u|+|k_i|\Big)^{p-2}(u-k_i)_-^2 \\
&\geq c_1(p) (2k)^{p-2}(u-k_i)_-^{2-p}\cdot (u-k_i)_-^{p}\\
&\geq c_1(p) (2k)^{p-2}(k_i-\widetilde{k}_i)^{2-p}\cdot (u-\widetilde{k}_i)_-^{p}\\
&= \frac{c_1(p)}{2^{(i+4)(2-p)}} (u-\widetilde{k}_i)_-^{p}.
\end{align*}
On the whole, 
\[
\sup_{t_ \in \widetilde{T}_i}\left\|(u-\widetilde{k}_i)_-\right\|_{L^{p}(\widetilde{B}_i)}^{p} \leq C(p)2^{i(2-p)_+}\sup_{t \in \widetilde{T}_i}\int_{\widetilde{B}_i \times \{t\}}\mathfrak{g}_-(u,k_i)\d{x}.
\]
After invoking Lemma~\ref{t.alg-est-3}, we will use this,~\eqref{e.dg-est-NT-b},~\eqref{e.caccioppoli3} and~\eqref{e.dg-est-NT-computation}. Thus, the bracket of~\eqref{e.dg-est-NT-j} is estimated as 
\begin{align*}
\Big[ \cdots \Big] & \quad \leq \int_{\widetilde{T}_i}[(u-k_i)_-]_{W^{s,p}(\widetilde{B}_i)}^p\d{t}+ (\widetilde{\rho}_i)^{-sp}\left\|(u-k_i)_-\right\|_{L^p(\widetilde{Q}_i)}^{p} \\
& \quad \quad \quad +C(p)2^{i(2-p)_+}\sup_{t \in \widetilde{T}_i}\int_{\widetilde{B}_i \times \{t\}}\mathfrak{g}_-(u,k_i)\d{x} \\
&\!\! \stackrel{\eqref{e.caccioppoli3}, \eqref{e.dg-est-NT-computation}}{\leq} C2^{i[(2-p)_++p]}\rho^{-sp}\big[\sfG^\prime + \sfH^\prime + \sfI^\prime \big]
\end{align*}
for a constant $C(\data)<\infty$, where we set
\begin{align*}
\sfG^\prime &:=\frac{1}{\delta}\iint_{Q_i}\mathfrak{g}_-(u,k_i)\dxt, \quad \sfH^\prime:=\iint_{Q_i}w_i^p\dxt \quad \mbox{and}\\
\sfI^\prime&:=\iint_{Q_i}w_i\d{x}\cdot \tail\big(w_i(t) ; B_i \big)^{p-1}\d{t}.
\end{align*}
Applying the same argument with $\sfG^\prime+\sfH^\prime+\sfI^\prime$ in place of $\sfG+\sfH+\sfI$, we conclude that
\begin{align*}
\sfG^\prime + \sfH^\prime + \sfI^\prime &\leq \frac{C}{\delta} k^p\left[\left|A_i\right|+\rho^{\frac{(p-1)sp}{p-1+\eps}}\left(\int_{T_i}|A_i(t)|^{\frac{p-1+\eps}{\eps}}\d{t}\right)^{\frac{\eps}{p-1+\eps}}\right]\\
& \leq \frac{C|Q_i|}{\delta}k^p\left[\frac{|A_i|}{|Q_i|} +\left[\dashint_{T_i}\left(\frac{|A_i(t)|}{|B_i|}\right)^{\frac{p-1+\eps}{\eps}}\d{t}\right]^{\nicefrac{\eps}{(p-1+\eps)}}\right]\\
&=\frac{C|Q_i|}{\delta}k^p \left(\sfY_i+\sfZ_i^{1+\lambda}\right).
\end{align*}
Substituting back to~\eqref{e.dg-est-NT-j} gives
\begin{align}\label{e.dg-est-NT-k}
\sfZ_{i+1} &\leq C 2^{i[(2-p)_++p]}\left(\frac{1}{|B_{i+1}|^{\frac{p-1+\eps}{\eps}}|T_{i+1}|}\right)^{\frac{\eps}{(1+\lambda)(p-1+\eps)}}\frac{|Q_i|}{\delta \rho^{sp}} \left(\sfY_i+\sfZ_i^{1+\lambda}\right) \notag\\
& \leq C \sfb^i \delta^{-\frac{\eps d}{d(p-1+\eps)+\eps sp}} \left(\sfY_i+\sfZ_i^{1+\lambda}\right).
\end{align}
To obtain the last line we used, by $\lambda=\frac{\eps sp}{d(p-1+\eps)}$, that
\[
\frac{d\lambda}{1+\lambda}-\frac{\eps sp}{(1+\lambda)(p-1+\eps)}=0, \quad \mbox{and} \quad \frac{\eps}{(1+\lambda)(p-1+\eps)}= \frac{\eps d}{d(p-1+\eps)+\eps sp}
\]
and hence,
\begin{align*}
\left(\frac{1}{|B_{i+1}|^{\frac{p-1+\eps}{\eps}}|T_{i+1}|}\right)^{\frac{\eps}{(1+\lambda)(p-1+\eps)}}\frac{|Q_i|}{\delta \rho^{sp}}
&\leq C\delta^{-\frac{\eps d}{d(p-1+\eps)+\eps sp}}
\end{align*}
with $C(d,p,\lambda, \eps)<\infty$. Let
\[
\sfK_i:=\sfY_i+\sfZ_i^{1+\lambda}, \quad \mbox{and} \quad \gamma:=\frac{\eps d}{d(p-1+\eps)+\eps sp}.
\]
Thus,~\eqref{e.dg-est-NT-h} and~\eqref{e.dg-est-NT-k} yield, following the argument of~\cite[Chapter I.4, Lemma 4.2]{DiB93}, that
\[
\sfK_{i+1} \leq 2C^{1+\lambda} \delta^{-\gamma(1+\lambda)}\sfb^{(1+\lambda)i}\sfK_i^{1+(\lambda \wedge \beta)}, \quad \forall i \in \N_0.
\]
By Lemma~\ref{t.FGC}, there exists a positive constant $\nu \in (0,1)$ depending only on $\data$ and $\eps>0$, such that $\sfK \to 0$ as $i \to \infty$, if we enforce that $\boldsymbol{K}_0 \leq \nu$, thereby implying
\[
|A_0|=\left|\left\{u<k\right\} \cap Q_{2\rho,2\tau}\right| \leq \nu\left|Q_{2\rho,2\tau}\right|.
\]
In particular,
\[
\sfK_\infty=0  \quad \Longrightarrow \quad u \geq \frac{k}{2} \quad \mbox{a.e. in $Q_{\rho,\tau}$},
\]
proving the statement.
\end{proof}

The following lemma states that point-wise information at later times under a critical measure density in a ball at some time.
\begin{lemma}\label{t.mt-NT}
Let $u$ be a weak super-solution to~\eqref{e.NT}-\eqref{e.kernel}, in the sense of  Definition~\ref{def-of-NT}, satisfying $u \geq 0$ in $\mathcal{Q}_R$ and let $\eps \in (0,\infty]$. Suppose, for some constants $k>0$ and $\alpha \in (0,1]$, that
\[
\Big|\left\{u(\cdot,t_0) \geq k \right\} \cap B_\rho(x_0)\Big| \geq \alpha |B_\rho(x_0)|.
\]
There exist constants $\eta$ and $\delta$ in $(0,1)$, both depending only on $\data$ and $\alpha$, such that either
\[
\left(\frac{\rho}{R}\right)^{\frac{sp\eps}{(p-1)(p-1+\eps)}} \left(\dashint_{I_R(t_0)} \tail \big(u_-(t)\,;B_R(x_0)\big)^{p-1+\eps}\d{t}\right)^{\nicefrac{1}{(p-1+\eps)}}>\eta k
\]
or
\[
\Big|\left\{u(\cdot,t) \geq \eta k \right\} \cap B_\rho(x_0)\Big| \geq \frac{\alpha}{2} |B_\rho(x_0)|
\]
for every $t \in (t_0,t_0+\delta \rho^{sp}]$, provided that  $B_\rho(x_0)\times (t_0,t_0+\delta \rho^{sp}] \subset \cQ_R \Subset \Omega_T$. In more generality, we fave
\[
\eta =\alpha/(8p) \quad \mbox{and} \quad \delta =\delta_0\alpha^{(d+p+1)\frac{p-1+\eps}{\eps}}
\]
for a constant $\delta_0=\delta_0(\data, \eps) \in (0,1)$.
\end{lemma}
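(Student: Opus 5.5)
The plan is the standard DiBenedetto-type ``propagation of measure information'' argument, carried out with the $\mathfrak{g}_-$-energy estimate of Proposition~\ref{t.caccioppoli1} at the constant level $k$. We translate to $x_0=0$ and work on $B_\rho\times(t_0,t_0+\delta\rho^{sp}]$. We take a time-independent cutoff $\zeta=\zeta(x)$ with
\[
\1_{B_{(1-\sigma)\rho}}\leq \zeta\leq \1_{B_{(1-\sigma/2)\rho}},\qquad |\nabla\zeta|\lesssim (\sigma\rho)^{-1},
\]
where $\sigma\in(0,1)$ is a small parameter depending on $\alpha$. Then $\partial_t\zeta=0$, and since $k'=0$ the last term of \eqref{e.caccioppoli1} vanishes. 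For $t\in(t_0,t_0+\delta\rho^{sp}]$ this gives
\begin{align*}
\int_{B_\rho\times\{t\}}\zeta^p\mathfrak{g}_-(u,k)\d{x}
&\leq \int_{B_\rho\times\{t_0\}}\mathfrak{g}_-(u,k)\d{x}
+C\int_{t_0}^{t}\iint_{B_\rho\times B_\rho}w_-^p\,\frac{|\zeta(x)-\zeta(y)|^p}{|x-y|^{d+sp}}\dxyt\\
&\quad +C\int_{t_0}^{t}\int_{B_\rho}\zeta^p w_-\d{x}\,\sup_{x\in\supp\zeta}\int_{\R^d\setminus B_\rho}\frac{w_-^{p-1}(y)}{|x-y|^{d+sp}}\d{y}\d{t},
\end{align*}
where $w_-=(u-k)_-$.

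\emph{Step 1: bounds on the right side.} Since $u\geq0$ in $\cQ_R$, we have $0\leq w_-\leq k$ in $B_\rho$. The initial datum vanishes on $\{u(\cdot,t_0)\geq k\}$. On the complement, Lemma~\ref{t.g} gives $\mathfrak{g}_-(u,k)\leq c_2(u+k)^{p-2}(k-u)^2$. A direct computation for $0\leq u\leq k$ (using $\mathfrak{g}_-(u,k)=(p-1)\int_u^k\tau^{p-2}(k-\tau)\d\tau$) shows this is at most $k^p/p$. Hence
\[
\int_{B_\rho\times\{t_0\}}\mathfrak{g}_-(u,k)\d{x}\leq \frac{k^p}{p}(1-\alpha)|B_\rho|.
\]
The cutoff term is bounded by $C\delta\sigma^{-p}k^p|B_\rho|$. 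For the tail term, we split $\R^d\setminus B_\rho$ into $B_R\setminus B_\rho$, where $w_-\leq k$ and the contribution is $\lesssim \sigma^{-d-sp}k^{p-1}\rho^{-sp}$, and $\R^d\setminus B_R$, where $w_-^{p-1}\lesssim k^{p-1}+u_-^{p-1}$. After integrating in time and applying Hölder with exponents $\big(\tfrac{p-1+\eps}{p-1},\tfrac{p-1+\eps}{\eps}\big)$ over the short interval of length $\delta\rho^{sp}$, exactly as in the estimate of $\sfI$ in Lemma~\ref{t.DGtype-NT}, the outer piece is controlled by
\[
C\sigma^{-d-sp}k\,|B_\rho|\,\delta^{\frac{\eps}{p-1+\eps}}\,\rho^{sp}\rho^{-sp}\left(\frac{\rho}{R}\right)^{\frac{sp\eps}{p-1+\eps}}\left(\dashint_{I_R}\tail(u_-;B_R)^{p-1+\eps}\d t\right)^{\frac{p-1}{p-1+\eps}}.
\]
If the first alternative fails, this is at most $C\sigma^{-d-sp}\delta^{\eps/(p-1+\eps)}k^p|B_\rho|$. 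In total the right side is bounded by
\[
k^p|B_\rho|\Big[\frac{1-\alpha}{p}+C\sigma^{-d-p}\,\delta^{\frac{\eps}{p-1+\eps}}\Big].
\]

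\emph{Step 2: lower bound on the left side.} On $\{u(\cdot,t)<\eta k\}\cap B_{(1-\sigma)\rho}$ we have $\zeta=1$. There, by the explicit formula for $\mathfrak{g}_-$ with $0\leq u\leq\eta k$,
\[
\mathfrak{g}_-(u,k)\geq (p-1)\int_{\eta k}^k\tau^{p-2}(k-\tau)\d\tau\geq \frac{k^p}{p}(1-c(p)\eta).
\]
Here the constant is explicit: the full integral from $0$ equals $k^p/p$, and the missing piece $(p-1)\int_0^{\eta k}\tau^{p-2}(k-\tau)\d\tau$ is at most $k^p\eta^{p-1}$. This is the delicate point, so we will use the form $1-p\,\eta^{p-1}$, or, if needed, choose $\eta$ as a power of $\alpha$ in place of $\alpha/(8p)$.

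Combining with Step 1 and using $|B_\rho\setminus B_{(1-\sigma)\rho}|\leq d\sigma|B_\rho|$, we get
\[
|\{u(\cdot,t)<\eta k\}\cap B_\rho|\leq \frac{(1-\alpha)+C\sigma^{-d-p}\delta^{\frac{\eps}{p-1+\eps}}}{1-p\eta^{p-1}}|B_\rho|+d\sigma|B_\rho|.
\]
We then choose $\sigma=\alpha/(8d)$, $\eta$ small in terms of $\alpha$ and $p$, and $\delta=\delta_0\alpha^{(d+p+1)\frac{p-1+\eps}{\eps}}$ so that $C\sigma^{-d-p}\delta^{\eps/(p-1+\eps)}\leq\alpha/8$. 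The right side is then at most $(1-\alpha/2)|B_\rho|$, which is the second alternative.

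\emph{Main obstacle.} The main obstacle is the doubly nonlinear term. The initial-time bound and the lower bound for $\mathfrak{g}_-$ must share the same sharp constant $k^p/p$, since Lemma~\ref{t.g} alone loses the ratio $c_2/c_1$ and would destroy the $(1-\alpha)$ versus $(1-\alpha/2)$ comparison. I would therefore work with the exact integral formula \eqref{e.def-of-g} rather than with the two-sided bounds of Lemma~\ref{t.g}. A secondary issue is checking that the time-integrated tail yields the positive power $\delta^{\eps/(p-1+\eps)}$; this power is what dictates the stated exponent $\frac{p-1+\eps}{\eps}$ in $\delta$.
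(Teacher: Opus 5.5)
Your argument is essentially the paper's: the energy estimate at the constant level $k$ between $t_0$ and $t$ with a purely spatial cutoff, and the exact integral form \eqref{e.def-of-g} of $\mathfrak{g}_-$ evaluated at both times. Your sharp value $k^p/p$ plays the role of the paper's ratio $\int_0^{\eta k}/\int_{\eta k}^{k}$. The rest also matches: Hölder in time on the tail, producing $\delta^{\eps/(p-1+\eps)}$, and the same choices $\sigma=\alpha/(8d)$, $\delta=\delta_0\alpha^{(d+p+1)\frac{p-1+\eps}{\eps}}$.

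The dependence on $\eta$ deserves one remark. Your exponent $\eta^{p-1}$ is the correct one, and the paper's \eqref{e.mt1-est-NT-4} carries a spurious extra factor $\eta$, since $k\int_0^{\eta k}\tau^{p-2}\d{\tau}=\eta^{p-1}k^p/(p-1)$. Your estimate therefore yields every $\eta$ with $p\,\eta^{p-1}\leq \alpha/4$. This contains the stated value $\eta=\alpha/(8p)$ when $p\geq 2$. For $1<p<2$, however, only $\eta\simeq(\alpha/(4p))^{1/(p-1)}$ comes out, so your fallback of taking $\eta$ as a power of $\alpha$ is genuinely needed there. For the same reason you should discard the intermediate bound $\frac{k^p}{p}(1-c(p)\eta)$, which is false for $p<2$.
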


\begin{proof}
The proof is crudely similar to the one of~\cite[Lemma 4.2]{CN26}. As usual, we may let $(x_0,t_0)=(0,0)$ and it suffices to check that
\[
\Big|\left\{u(\cdot,t) <\eta k \right\} \cap B_\rho\Big| \leq \left(1-\frac{\alpha}{2}\right) |B_\rho|, \quad \forall t \in (0, \delta \rho^{sp}],
\]
where $\eta \in (0,\nicefrac{1}{2})$ is to be selected below, provided that 
\begin{equation}\label{e.mt1-est-NT-1}
\left(\frac{\rho}{R}\right)^{\frac{sp\eps}{(p-1)(p-1+\eps)}} \left(\dashint_{I_R} \tail \big(u_-(t)\,;B_R\big)^{p-1+\eps}\d{t}\right)^{\nicefrac{1}{(p-1+\eps)}} \leq \eta k.
\end{equation}
We remark that this restriction for $\eta$ is temporarily needed for the proof--which is convenient for technical reasons--, whereas, in the proof of~\cite[Lemma 4.2]{CN26} we did not impose this one. Nevertheless, at the conclusion of the proofs of both theorems, $\eta$ must be chosen as small as desired.

We begin with the Caccioppoli inequality~\eqref{e.caccioppoli2} over two concentric cylinders $(1-\sigma)Q \subset Q$, where $Q:=B_\rho \times (0,\delta \rho^{sp}]$ and $\sigma \in (0,1)$ is to be selected below too and thereby obtain, for every $t \in (0,\delta \rho^{sp}]$, 
\begin{align}\label{e.mt1-est-NT-2}
\sfG_t&:=\int_{B_{(1-\sigma)\rho}\times \{t\}}\mathfrak{g}_-(u,k)\d{x}\notag\\[4pt]
&\quad\quad  \leq \int_{B_{(1-\sigma)\rho}\times \{0\}}\mathfrak{g}_-(u,k)\d{x}+C\frac{\rho^{(1-s)p}}{(\sigma \rho)^p}\iint_{Q}w_-^p\dxt \notag \\[4pt]
&\quad \quad \quad +C\frac{\rho^{d}}{(\sigma \rho)^{d+sp}}\iint_{Q}w_{-}(x,t)\d{x} \cdot \tail \big(w_-(t)\,; B_{\rho} \big)^{p-1}\d{t} \notag\\
&\quad \quad =:\sfG_0+\sfH+\sfI,
\end{align}
where the definitions of four integrations $\sfG_t, \sfG_0, \sfH$ and $\sfI$ are obvious from the context.  We proceed by estimating each of these four integrals separately. For $\sfG_t$, it is clear from the definition that
\begin{align*}
\sfG_t&=(p-1)\int_{B_{(1-\sigma)\rho}\times \{t\}} \int_u^k|\tau|^{p-2}(\tau-k)_-\dtaux\\
& \geq (p-1)\int_{B_{(1-\sigma)\rho}\times \{u(t) < \eta k\}} \int_{\eta k} ^k|\tau|^{p-2}(\tau-k)_-\dtaux.
\end{align*}
This, together with the simple estimate that
\[
\left|\{u(\cdot, t)<\eta k\} \cap B_\rho \right| \leq \left|\{u(\cdot, t)<\eta k\} \cap B_{(1-\sigma)\rho} \right|+d\sigma |B_\rho|,
\]
in turn implies that
\[
\sfG_t \geq (p-1) \Big(\left|\{u(\cdot, t)<\eta k\} \cap B_\rho \right|-d\sigma|B_\rho|\Big)\int_{\eta k} ^k|\tau|^{p-2}(\tau-k)_-\d{\tau}.
\]
For $\sfG_0$, in view of $u(\cdot, 0) \geq 0$ in $B_\rho \subset B_R$ and the assumption that
\[
\left|\{u(\cdot, 0)<k\} \cap B_\rho \right| \leq (1-\alpha)|B_\rho|
\]
we deduce that
\begin{align*}
\sfG_0 &=(p-1)\int_{B_{\rho}\times \{0\}} \int_u^k|\tau|^{p-2}(\tau-k)_-\dtaux\\
&\leq (p-1)\left|\{u(\cdot, 0)<k\} \cap B_\rho \right| \int_0^k|\tau|^{p-2}(\tau-k)_-\d{\tau}\\
&\leq (p-1)(1-\alpha)|B_\rho|\int_0^k|\tau|^{p-2}(\tau-k)_-\d{\tau}.
\end{align*}
By a exact same argument leading to the estimates of $\sfH+\sfI$, after enforcing~\eqref{e.mt1-est-NT-1}, we straightforwardly estimate that
\begin{align*}
\sfH+\sfI &\leq C \frac{\delta}{\sigma^p}k^p|B_\rho|+C\sigma^{-(d+sp)}\left(\delta+\delta^{\frac{\eps}{p-1+\eps}}\right)k^p|B_\rho|\\
&\leq C\frac{\delta^{\frac{\eps}{p-1+\eps}}}{\sigma^{d+p}}k^p|B_\rho|.
\end{align*}
Combining the previous three estimates with~\eqref{e.mt1-est-NT-2} and rearranging give
\begin{align}\label{e.mt1-est-NT-3}
\frac{\left|\{u(t)<\eta k\} \cap B_\rho \right| }{|B_\rho|} & \leq (1-\alpha) \left[1+\frac{\displaystyle \int_0^{\eta k}|\tau|^{p-2}(\tau-k)_-\d{\tau}}{\displaystyle \int_{\eta k}^k|\tau|^{p-2}(\tau-k)_-\d{\tau}}\right] \notag\\
& \quad \quad +C\frac{\delta^{\frac{\eps}{p-1+\eps}}}{\sigma^{d+p} \displaystyle \int_{\eta k}^k|\tau|^{p-2}(\tau-k)_-\d{\tau}}k^p+d\sigma.
\end{align}
It is straightforward to check that
\begin{equation}\label{e.mt1-est-NT-4} 
\int_0^{\eta k}|\tau|^{p-2}(\tau-k)_-\d{\tau} \leq k \int_0^{\eta k}|\tau|^{p-2}\d{\tau}=\frac{\eta}{p-1}k^p\eta^{p-1}.
\end{equation}
We next argue that
\begin{equation}\label{e.mt1-est-NT-5}
\int_{\eta k}^k|\tau|^{p-2}(\tau-k)_-\d{\tau} \geq ck^p
\end{equation}
for a constant $c(p)<\infty$. Indeed, by Lemma~\ref{t.g} we can estimate
\[
\int_{\eta k}^k|\tau|^{p-2}(\tau-k)_-\d{\tau}=\frac{1}{p-1}\mathfrak{g}_-(\eta k, k) \geq \frac{c_1}{p-1}\Big(|\eta k| +|k| \Big)^{p-2}\left(\eta k -k\right)_-^2
\]
hence we are led to consider two cases. In the first case $p \geq 2$, it is obvious that
\begin{align*}
\int_{\eta k}^k|\tau|^{p-2}(\tau-k)_-\d{\tau} &\geq \frac{c_1}{p-1} \left(\eta k-k\right)_-^p=\frac{c_1}{p-1}k^p(1-\eta)^p \\
&\geq \frac{c_1}{2^p(p-1)}k^p.
\end{align*}
In the remaining case $1<p<2$,  observe that, for $\eta \in (0,\nicefrac{1}{2})$,
\[
|\eta k| +|k| \leq (\eta +1)k \leq 3k/2 \quad \mbox{and} \quad (1-\eta)^2 \geq 1/4,
\]
thereby getting
\begin{align*}
\int_{\eta k}^k|\tau|^{p-2}(\tau-k)_-\d{\tau} \geq \frac{c_1}{p-1} \left(\frac{3k}{2}\right)^{p-2}k^2(1-\eta)^2 \geq \frac{c_1}{3^{2-p}2^p(p-1)}k^p.
\end{align*}
Thus, in all cases, we deduce~\eqref{e.mt1-est-NT-5}.

Combining the preceding estimates~\eqref{e.mt1-est-NT-3}--\eqref{e.mt1-est-NT-5} yields, for $c(p)<\infty$ and $C(\data)<\infty$, that
\[
\frac{\left|\{u(t)<\eta k\} \cap B_\rho \right| }{|B_\rho|} \leq (1-\alpha)(1+c\eta^p)+C\frac{\delta^{\frac{\eps}{p-1+\eps}}}{\sigma^{d+p}}+d\sigma.
\]
At this stage, we suppress $\eta<\nicefrac{1}{2}$ satisfying
\[
(1-\alpha)(1+c\eta^p) \leq 1-\frac{3}{4}\alpha \quad \iff \quad \eta \leq \left[\frac{\alpha}{4(1-\alpha)}\right]^{\nicefrac{1}{p}}
\]
and subsequently select
\[
\sigma=\frac{\alpha}{8d} \quad \mbox{and} \quad C\frac{\delta^{\frac{\eps}{p-1+\eps}}}{\sigma^{d+p}} \leq \frac{\alpha}{4},
\]
thereby implying the dependence of $\delta$ on $\alpha$; namely that $\delta=\delta_0\alpha^{(d+p+1)\frac{p-1+\eps}{\eps}}$ for a positive constant $\delta_0$ depending upon $\data$ and $\eps$, and moreover we have that
\[
\frac{\left|\{u(t)<\eta k\} \cap B_\rho \right| }{|B_\rho|} \leq 1-\frac{\alpha}{2}, \quad \forall t \in (0,\delta \rho^{sp}].
\]
This finishes the proof.
\end{proof}

We are ready to derive the following measure shrinking lemma.

\begin{lemma}\label{Lm:shrinking-FT}
Let $u$ be a weak super-solution to~\eqref{e.NT}-\eqref{e.kernel}, in the sense of Definition~\ref{def-of-NT}, satisfying $u \geq 0$ in $\mathcal{Q}_R$. Let $\eps \in (0,\infty)$, $k>0$ and $\alpha \in (0,1]$. Suppose that, for some $\delta \in (0,1)$ and $\sigma \in (0,\nicefrac{1}{2})$,
\begin{equation}\label{e.shrinking-est-0-NT}
\Big|\left\{u(\cdot,t) \geq k \right\} \cap B_\rho(x_0)\Big| \geq \alpha |B_\rho| \quad \mbox{for all\,\, $t \in (t_0-\delta\rho^{sp},t_0]$}.
\end{equation}
Then, there exists $C>0$ depending only on $\data$ and independent of $\alpha$ and $\delta$, such that if for some $\sigma \in (0, 1/2)$
\[
\left(\frac{\rho}{R}\right)^{\frac{sp\eps}{(p-1)(p-1+\eps)}} \left(\dashint_{I_R(t_0)} \tail \big(u_-(t)\,;B_R(x_0)\big)^{p-1+\eps}\d{t}\right)^{\nicefrac{1}{(p-1+\eps)}} \leq \sigma k
\]
is satisfied, then
\[
\Big|\left\{ u \leq \sigma k \right\} \cap Q_{\rho,\tau}(z_0) \Big| \leq C\frac{\sigma^{p-1}}{\delta \alpha}|Q_{\rho,\tau}(z_0)|
\]
holds for every $Q_{2\rho,2\tau}(z_0) \subset \cQ_R \Subset \Omega_T$ with $\tau=\delta \rho^{sp}$.
\end{lemma}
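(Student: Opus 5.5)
The plan is to follow the classical measure-shrinking argument of DiBenedetto (and its nonlocal version in \cite[Lemma 4.3]{CN26}). It rests on a single application of the Caccioppoli estimate \eqref{e.caccioppoli3}$_-$. The key point is to exploit the \emph{good} nonlocal term on the left-hand side, namely
\[
\iint_{Q_{r_1,\tau_1}} w_-(x,t)\int_{\R^d}\frac{w_+^{p-1}(y,t)}{|x-y|^{d+sp}}\d{y}\dxt .
\]
Here the level is $\sigma k$, so $w_-=(u-\sigma k)_-$ and $w_+=(u-\sigma k)_+$. Apply \eqref{e.caccioppoli3}$_-$ with this level on the pair $Q_{\rho,\tau}\subset Q_{2\rho,2\tau}$, that is, with $r_1=\rho$, $r_2=2\rho$, $\tau_1=\tau$, $\tau_2=2\tau$, and keep only this mixed term on the left. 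Since $u\ge 0$ in $\cQ_R$, one has $w_-\le \sigma k$ there.

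\textbf{Lower bound.} For $x\in B_\rho$ and $y\in B_\rho$ we have $|x-y|\le 2\rho$. On the set $\{u(\cdot,t)\ge k\}\cap B_\rho$ we have $w_+\ge (1-\sigma)k\ge k/2$. Using \eqref{e.shrinking-est-0-NT}, for every $t\in(-\tau,0]$,
\[
\int_{\R^d}\frac{w_+^{p-1}(y,t)}{|x-y|^{d+sp}}\d{y}\ \ge\ c\,\frac{k^{p-1}\alpha|B_\rho|}{\rho^{d+sp}}=c\,\alpha\,\frac{k^{p-1}}{\rho^{sp}} .
\]
On $\{u\le \sigma^2 k\}$... this is too fine; instead use the level-set estimate directly. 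On $\{u\le \sigma k/2\}$ we have $w_-\ge\sigma k/2$, which gives
\[
\text{LHS}\ \ge\ c\,\alpha\,\frac{\sigma k^{p}}{\rho^{sp}}\,\big|\{u\le\sigma k/2\}\cap Q_{\rho,\tau}\big| .
\]
To obtain the stated set $\{u\le\sigma k\}$, run the argument at level $2\sigma k$ instead. This only changes constants, since the tail hypothesis with $\sigma k$ also holds at $2\sigma k$, and $2\sigma<1$ still keeps $w_+\gtrsim k$ on $\{u\ge k\}$ as long as $\sigma<1/4$. For $\sigma\in[1/4,1/2)$ the claim is trivial once $C\ge 4^{p-1}\cdot 2$, because $\delta\alpha\le1$.

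\textbf{Upper bound of the right side.} The three terms of \eqref{e.caccioppoli3}$_-$ are handled as follows.

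First, the $\mathfrak g_-$ term. With $0\le u$ and a level of order $\sigma k$, Lemma~\ref{t.g} (splitting into the cases $p\ge2$ and $p<2$ as in the proof of Lemma~\ref{t.DGtype-NT}) gives $\mathfrak g_-\le C(\sigma k)^p$. Dividing by $\tau_2-\tau_1=\tau$, this term is at most $C(\sigma k)^p|Q_{2\rho,2\tau}|/(\delta\rho^{sp})$.

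Second, the $w_-^p$ term is at most $C(\sigma k)^p|Q_{2\rho,2\tau}|/\rho^{sp}$.

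Third, the tail term. Split $\tail(w_-;B_{2\rho})^{p-1}$ into the annulus $B_R\setminus B_{2\rho}$, where $w_-\le\sigma k$, contributing $C(\sigma k)^{p-1}$, and the part outside $B_R$, where $w_-\le \sigma k+u_-$. The latter is bounded, via H\"older in time with exponents $\big(\tfrac{p-1+\eps}{p-1},\tfrac{p-1+\eps}{\eps}\big)$ exactly as in the estimate of $\sfI$ in Lemma~\ref{t.DGtype-NT}, using the hypothesis $(\rho/R)^{\dots}(\dashint\tail(u_-)^{p-1+\eps})^{1/(p-1+\eps)}\le\sigma k$. This produces $C(\sigma k)^{p}|Q_{2\rho,2\tau}|\rho^{-sp}\big(1+\delta^{-(p-1)/(p-1+\eps)}\big)\le C(\sigma k)^p|Q|/(\delta\rho^{sp})$.

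\textbf{Conclusion.} Altogether, the right side is at most $C\sigma^pk^p|Q_{\rho,\tau}|/(\delta\rho^{sp})$. Dividing by the lower bound $c\alpha\sigma k^p\rho^{-sp}$ yields
\[
\big|\{u\le\sigma k\}\cap Q_{\rho,\tau}\big|\le C\frac{\sigma^{p-1}}{\delta\alpha}|Q_{\rho,\tau}| ,
\]
with $C=C(\data)$; it is independent of $\alpha$ and $\delta$ apart from the displayed factors. The $\eps$-dependence is absorbed, or else $C$ also depends on $\eps$.

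\textbf{Main obstacle.} I expect the tail bookkeeping to be the hardest step. It must give a factor no worse than $\delta^{-1}$; this holds because $\delta\le1$ and the exponent $(p-1)/(p-1+\eps)<1$. A second delicate point is making sure the time-derivative term is bounded by $(\sigma k)^p$ uniformly in both the $p<2$ and $p\ge2$ regimes, rather than by $k^{p-2}(\sigma k)^2$, which would ruin the power of $\sigma$. For $p<2$ one uses $|u|+|\text{level}|\ge w_-$. For $p\ge2$ one uses $|u|+|\text{level}|\le 2\sigma k$ on the support of $w_-$.
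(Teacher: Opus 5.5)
Your proposal is correct and follows essentially the paper's route: a single Caccioppoli estimate at a level comparable to $\sigma k$, with the good mixed term $\iint_{Q_{\rho,\tau}} w_-\int_{\R^d} w_+^{p-1}(y,t)|x-y|^{-d-sp}\d{y}\dxt$ bounded below by $c\,\alpha\,\sigma k^p\rho^{-sp}\,|\{u\le\sigma k\}\cap Q_{\rho,\tau}|$ via \eqref{e.shrinking-est-0-NT}. On the right, the $\mathfrak g_-$ term is bounded by $(\sigma k)^p$ in both regimes of $p$, which produces the factor $\delta^{-1}$, and the tail term costs at most $\delta^{-(p-1)/(p-1+\eps)}\le\delta^{-1}$. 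The differences are minor: the paper uses the initial-slice form \eqref{e.caccioppoli2}$_-$ on $Q_{\rho,\tau}\subset Q_{2\rho,\tau}$ rather than the time-cutoff form \eqref{e.caccioppoli3}$_-$, and it delegates to \cite[Lemma 4.3]{CN26} the level shift and the tail/H\"older bookkeeping that you carry out explicitly and correctly; you should just delete the stray ``$\{u\le\sigma^2k\}$'' sentence.
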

\begin{proof}
As usual, to lighten the notation, we drop the center $z_0$ from cylinders considered here.
Set
\[
w_{\pm}:=\left(u-\sigma k\right)_\pm \quad \mbox{and} \quad Q_{\rho,\tau}:=B_\rho \times (-\tau, 0] \quad \mbox{with \,\,$\tau:=\delta\rho^{sp}$}. 
\]
We apply~\eqref{e.caccioppoli2}$_-$ on the concentric cylinders $Q_{\rho, \tau} \subset Q_{2\rho,\tau}$ to obtain that
\begin{align*}
&\int_{Q_{\rho,\tau}}w_-(x,t)\d{x}\left(\int_{\R^d}\frac{w_+^{p-1}(y,t)}{|x-y|^{d+sp}}\d{y}\right)\d{t}\\
&\quad \leq \int_{B_{2\rho} \times \{0\}}\mathfrak{g}_-(u,\sigma k)\d{x}+c\rho^{-sp}\iint_{Q_{2\rho,\tau}}w_-^p\dxt \\
&\quad \quad \quad \quad +c\rho^{-sp} \iint_{Q_{2\rho,\tau}}w_-\d{x}\cdot \tail\big(w_-(t)\,;B_{2\rho}\big)^{p-1}\d{t}.
\end{align*}
A completely same argument in the proof of~\cite[Lemma 4.3]{CN26} reduces our task here to estimating the first term on the right side. From Lemma~\ref{t.g} it follows that
\begin{align*}
\sfG_0:=\int_{B_{2\rho} \times \{0\}}\mathfrak{g}_-(u,\sigma k)\d{x} \leq C(p) \int_{B_{2\rho} \times \{0\}} \Big(|u|+|\sigma k|\Big)^{p-2}(u-\sigma k)_-^2\d{x}.
\end{align*}
Similarly to $\sfG$ as in the proof of Lemma~\ref{t.DGtype-NT}, the proof splits into two cases: $p \geq 2$ and $1<p<2$. In the case $p \geq 2$, since by the nonnegativity of $u$ on $B_R$, $0 \leq u \leq \sigma k $ on $B_{2\rho} \cap \{u(\cdot, 0) <\sigma k\}$ we have that
\[
\sfG_0\leq C(\sigma k)^{p}|B_{2\rho}| = C\frac{(\sigma k)^p}{\delta \rho^{sp}}|Q_{\rho,\tau}|.
\]
In the opposite case $1<p<2$, obviously, 
\[
\sfG_0 \leq C\int_{B_{2\rho} \times \{0\}} (u-\sigma k)_-^p\d{x} \leq C\frac{(\sigma k)^p}{\delta \rho^{sp}}|Q_{\rho,\tau}|
\]
follows. In all cases, we have shown that
\[
\sfG_0=\int_{B_{2\rho} \times \{0\}}\mathfrak{g}_-(u,\sigma k)\d{x} \leq C\frac{(\sigma k)^p}{\delta \rho^{sp}}|Q_{\rho,\tau}|.
\]
The remaining step is exactly the same as the one of~\cite[Lemma 4.3]{CN26}, which concludes the proof.
\end{proof}



Using the advantage that the integral over whole $\R^d$ on the left side of~\eqref{e.caccioppoli2}$_-$, another shriking property, peculiar of the nonlocal framework, can be obtained.

\begin{lemma}\label{t.newmt-NT}
Fix the cylinder $Q_\rho \equiv Q_\rho(z_0) \subset \cQ_R$ with $z_0=(x_0,t_0)\in \Omega_T$. Let $u$ be a weak super-solution to~\eqref{e.NT}-\eqref{e.kernel}, in the sense of  Definition~\ref{def-of-NT}, satisfying $u \geq 0$ in $\mathcal{Q}_R$. Fix $\eps \in (0,\infty)$ and $k>0$ arbitrarily. Suppose that
\[
\left(\frac{\rho}{R}\right)^{\frac{sp\eps}{(p-1)(p-1+\eps)}} \left(\dashint_{I_R(t_0)} \tail \big(u_-(t)\,;B_R(x_0)\big)^{p-1+\eps}\d{t}\right)^{\nicefrac{1}{(p-1+\eps)}} \leq k.
\]
There exists a constant $C(\data)>0$ such that 
\[
\inf_{t \in (t_0-\rho^{sp},t_0]}\Big|\left\{u(\cdot,t) \leq k \right\} \cap B_\rho(x_0) \Big| \leq  \left[\frac{C k^{p-1}}{(u^{p-1})_{Q_\rho(z_0)}}\wedge \frac{C k^{p-1}}{\displaystyle \dashint_{t_0-\rho^{sp}}^{t_0} \tail \big(u_+(t)\,;B_\rho(x_0)\big)^{p-1}\d{t}} \right]|B_\rho(x_0)|.
\]
\end{lemma}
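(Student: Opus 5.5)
We use the Caccioppoli estimate \eqref{e.caccioppoli2}$_-$ with the constant level $k$, on the concentric cylinders $Q_\rho\subset Q_{2\rho}$ with $\tau_1=\tau_2=\rho^{sp}$. The key ingredient is the ``good'' nonlocal term on the left-hand side,
\[
\iint_{Q_\rho} w_-(x,t)\left(\int_{\R^d}\frac{w_+^{p-1}(y,t)}{|x-y|^{d+sp}}\d{y}\right)\dxt, \qquad w_\pm=(u-k)_\pm .
\]
All right-hand side terms are of order $k^p|B_\rho|$, exactly as in Lemma~\ref{Lm:shrinking-FT} with $\sigma=1$ and $\delta=1$. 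The initial term $\int_{B_{2\rho}\times\{t_0-\rho^{sp}\}}\mathfrak{g}_-(u,k)\d{x}$ is handled by Lemma~\ref{t.g}, splitting into $p\ge2$ and $1<p<2$, and using $0\le u$ on $B_R$, so that $w_-\le k$ in $B_{2\rho}$. The energy term is at most $C\rho^{-sp}k^p|Q_{2\rho}|$. The tail term is treated by the annuli decomposition from the proof of Lemma~\ref{t.DGtype-NT}, where the hypothesis on the $u_-$ tail (with constant $1$ in place of $1/2$) together with H\"older in time yields $Ck^{p}|B_\rho|$. The right-hand side is therefore bounded by $Ck^p|B_\rho|$.

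For the left-hand side, note that on $\{u(\cdot,t)\le k/2\}$ we have $w_-\ge k/2$. I would instead run the argument at level $2k$, or equivalently rename constants, so that $w_-\ge k$ on $\{u\le k\}$ with $w_\pm=(u-2k)_\pm$; the tail hypothesis is still satisfied since $k\le 2k$. For $x\in B_\rho$ we have $|x-y|\le 2\rho$ when $y\in B_\rho$, and $|x-y|\le 2|y-x_0|$ when $y\notin B_\rho$. This gives
\[
\int_{\R^d}\frac{w_+^{p-1}(y,t)}{|x-y|^{d+sp}}\d{y}\ \ge\ c\rho^{-d-sp}\int_{B_\rho}w_+^{p-1}(y,t)\d{y}+c\rho^{-sp}\tail\big(w_+(t);B_\rho\big)^{p-1}.
\]
The left-hand side is then bounded below by
\[
ck\int_{t_0-\rho^{sp}}^{t_0}\big|\{u(\cdot,t)\le k\}\cap B_\rho\big|\,\rho^{-sp}\Big[(w_+^{p-1})_{B_\rho}+\tail(w_+(t);B_\rho)^{p-1}\Big]\d{t}.
\]
Bounding $|\{u(\cdot,t)\le k\}\cap B_\rho|$ from below by its infimum over $t$ and dividing, we obtain
\[
\inf_t|\{u\le k\}\cap B_\rho|\ \cdot\ \dashint\Big[(w_+^{p-1})_{B_\rho}+\tail(w_+;B_\rho)^{p-1}\Big]\d{t}\ \le\ Ck^{p-1}|B_\rho|.
\]

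It remains to replace $w_+$ by $u$, and this is the main obstacle. Since $u^{p-1}\le C\big(w_+^{p-1}+(2k)^{p-1}\big)$, we get $(u^{p-1})_{Q_\rho}\le C\,\dashint(w_+^{p-1})_{B_\rho}\d{t}+Ck^{p-1}$. If $(u^{p-1})_{Q_\rho}\le 2Ck^{p-1}$, the claim is trivial after enlarging $C$, because the measure ratio is at most $1$. Otherwise the $k^{p-1}$ term is absorbed. The same works for the tail: $u_+\le w_++2k$ outside $B_\rho$ and $\tail(2k;B_\rho)^{p-1}=ck^{p-1}$. Each of the two bounds then yields its own version of the inequality, and taking the minimum gives the stated estimate.
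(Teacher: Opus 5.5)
Your proposal is correct and takes essentially the paper's route. The paper defers to \cite[Lemmas 5.1 and 5.2]{CN26}, that is, to this use of the good nonlocal term on the left of \eqref{e.caccioppoli2}$_-$, and names as the only new ingredient the bound $\int_{B_\rho}\mathfrak{g}_-(u,k)\d{x}\leq Ck^p|B_\rho|$ for the initial-time term, obtained from Lemma~\ref{t.g} by splitting $p\geq 2$ and $1<p<2$ exactly as you do. One small point: working on $Q_{2\rho}$ tacitly uses $B_{2\rho}\subset B_R$ (so that $w_-\leq 2k$ there), which is slightly more than the stated hypothesis $Q_\rho\subset\cQ_R$, but this is harmless in the applications, where $R\gg\rho$.
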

\begin{proof}
The proof is almost repeated verbatim for~\cite[Lemmas 5.1 and 5.2]{CN26}; whereas the different part is that
\[
\int_{B_\rho \times \{0\}} \mathfrak{g}_-(u,k)\d{x} \leq Ck^{p}|B_{\rho}|.
\]
This is proved in the same way as $\sfG_0$ as in the previous lemma, completing the proof.
\end{proof}

We conclude this subsection by exhibiting a De Giorgi type lemma that requires a weaker assumption, when compared to Lemma~\ref{t.DGtype-NT}. 

\begin{lemma}\label{t.weakDGtype-NT}
Let $u$ be a weak super-solution to~\eqref{e.NT}-\eqref{e.kernel}, in the sense of  Definition~\ref{def-of-NT}, satisfying $u \geq 0$ in $\mathcal{Q}_R$ and let $\eps \in (0,\infty)$. There exist parameters $\delta$, $\nu_\ast \in (0,1)$, both depending only on $\data$ and $\eps$, such that, if
\[
\left|\{u(\cdot,t_0)<k\} \cap B_{2\rho}(x_0)\right| \leq \nu_\ast \left|B_{2\rho}(x_0)\right|
\]
and
\[
\left(\frac{\rho}{R}\right)^{\frac{sp\eps}{(p-1)(p-1+\eps)}} \left(\dashint_{I_R(t_0)} \tail \big(u_-(t)\,; B_R(x_0)\big)^{p-1+\eps}\d{t}\right)^{\nicefrac{1}{(p-1+\eps)}} \leq \frac{k}{4}
\]
then 
\[
u \geq \frac{k}{4} \quad \textrm{a.e.\,\,in}\,\, \,B_{\frac{\rho}{2}}(x_0) \times \left(t_0+\frac{1}{2}\delta \rho^{sp}, t_0+\delta\rho^{sp}\right],
\]
provided that $B_{2\rho}(x_0) \times \left(t_0,t_0+\delta\rho^{sp}\right] \subset \cQ_R \Subset \Omega_T$. 
\end{lemma}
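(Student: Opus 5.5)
The plan is to combine Lemma~\ref{t.mt-NT} with Lemma~\ref{t.DGtype-NT}, both applied with the initial measure information propagated forward in time. Take $(x_0,t_0)=(0,0)$. The hypothesis says $|\{u(\cdot,0)\ge k\}\cap B_{2\rho}|\ge(1-\nu_\ast)|B_{2\rho}|$. Since $\nu_\ast$ will be small, we have $\alpha:=1-\nu_\ast$ close to $1$. Lemma~\ref{t.mt-NT} with this $\alpha$ and radius $2\rho$ only gives measure $\alpha/2$ at later times, which is too weak for the De Giorgi lemma. So we will not use it as a black box. Instead we rerun its proof, i.e. the Caccioppoli estimate~\eqref{e.caccioppoli2}$_-$ on $B_{2\rho}\times(0,\delta\rho^{sp}]$ with level $k/2$ and small spatial shrinking $\sigma$. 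This yields
\[
\frac{|\{u(\cdot,t)<\eta k\}\cap B_{2\rho}|}{|B_{2\rho}|}\le \nu_\ast(1+c\eta^{p})+C\frac{\delta^{\frac{\eps}{p-1+\eps}}}{\sigma^{d+p}}+d\sigma \qquad\text{for all } t\in(0,\delta\rho^{sp}].
\]
The tail hypothesis with $k/4$ is what controls the nonlocal term $\sfI$ there. The point is that the factor $(1-\alpha)$ in~\eqref{e.mt1-est-NT-3} is now $\nu_\ast$, so the right side can be made as small as we like.

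Next we integrate in time over $(\tfrac14\delta\rho^{sp},\delta\rho^{sp}]$. This gives $|\{u<\eta k\}\cap \widehat Q|\le \bar\nu|\widehat Q|$, where $\widehat Q=B_{\rho}\times(\tfrac14\delta\rho^{sp},\delta\rho^{sp}]$ is a forward cylinder of radius $\rho$ and duration $\tfrac34\delta\rho^{sp}$. The ratio $|B_{2\rho}|/|B_\rho|=2^d$ only changes constants. We then apply Lemma~\ref{t.DGtype-NT} on the pair $Q_{\rho,\tau'}\subset Q_{2\cdot\rho/2,\,2\tau'}$, i.e. with radius $\rho/2$, $\tau'\simeq\delta(\rho/2)^{sp}$ and vertex $t=\delta\rho^{sp}$. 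It is applied with level $k$ in place of $\eta k$, or more simply with $\eta=1/2$ throughout, so that the final bound reads $u\ge k/4$. The time-dilation parameter in that lemma is proportional to $\delta$, so its smallness threshold is $\nu_0\delta^{q}$, with $\delta$ depending only on $\data$ and $\eps$. The tail condition of Lemma~\ref{t.DGtype-NT} at the smaller radius follows from the assumed one, because $(\rho/2R)^{(\cdots)}\le(\rho/R)^{(\cdots)}$.

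The order of choices is as follows. First fix $\delta$ from the De Giorgi lemma's geometry, so that its cylinder covers $(\tfrac12\delta\rho^{sp},\delta\rho^{sp}]$. This forces the threshold $\nu_0\delta^{q}$. Next choose $\sigma$ so that $d\sigma\le \tfrac14\nu_0\delta^{q}2^{-d}$. The delicate part is the term $C\delta^{\eps/(p-1+\eps)}\sigma^{-(d+p)}$: it must also be $\le\tfrac14\nu_0\delta^q2^{-d}$, but $\sigma$ already depends on $\delta$. This is the main obstacle, a genuine circularity between $\delta$ and $\sigma$. To break it, I would first run the De Giorgi step with a fixed $\delta_1$ (say $\delta_1=1$ in Lemma~\ref{t.DGtype-NT}). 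I would then use only a short forward window of length $\delta_2\rho^{sp}$ with $\delta_2\ll\delta_1$, applying Lemma~\ref{t.DGtype-NT} with parameter $\delta_2$ so that its required threshold is $\nu_0\delta_2^{q}$. The required bound then becomes $C\delta_2^{\eps/(p-1+\eps)-(d+p)\cdot(\text{power})}$, which may still be bad. If so, the fallback is to use a time-dependent level in Proposition~\ref{t.caccioppoli1} and to bound the time-slice measure via a Chebyshev/energy argument directly. In that route the dependence on $\sigma$ is replaced by a fixed cutoff between $B_{2\rho}$ and $B_\rho$, with no factor $d\sigma$. This removes the circularity, since the cutoff scale is then fixed at $\rho$. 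In the end $\nu_\ast$ is chosen last, as $\nu_\ast\le\tfrac14\nu_0\delta^{q}2^{-d}$, and $\eta=1/2$ is absorbed into constants.
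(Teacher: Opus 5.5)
Your two-step plan (propagate the smallness of $\{u<k\}$ forward from $t_0$, then apply Lemma~\ref{t.DGtype-NT} on $B_\rho\times(t_0,t_0+\delta\rho^{sp}]$) is reasonable, but the parameters cannot be closed, and your fallback does not repair this. The obstruction is not the thin-shell parameter $\sigma$. Even with a fixed cutoff between $B_{2\rho}$ and $B_\rho$, rerunning the proof of Lemma~\ref{t.mt-NT} bounds $\sfH$ and $\sfI$ crudely through $(u-k)_-\le k$ on the whole cylinder. That gives only
\[
\frac{|\{u(\cdot,t)<k/2\}\cap B_\rho|}{|B_\rho|}\le C\Big(\nu_\ast+\delta^{\frac{\eps}{p-1+\eps}}\Big),\qquad t\in(t_0,t_0+\delta\rho^{sp}],
\]
and the second term carries no factor $\nu_\ast$. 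The local cutoff term alone already contributes $C\delta$. A time-dependent level in Proposition~\ref{t.caccioppoli1} does not remove it, since such a level only absorbs the far tail. Lemma~\ref{t.DGtype-NT}, on a cylinder of time length $\delta\rho^{sp}$, needs smallness $\nu_0\delta^{q}$ with $q>1>\eps/(p-1+\eps)$. But $C\delta^{\eps/(p-1+\eps)}\ge C\delta^{q}>\nu_0\delta^{q}$ for every $\delta\in(0,1]$. So no $\delta$ is admissible, and choosing $\nu_\ast$ last cannot help. The loss $\delta^q$ is not an artifact of the De Giorgi side either: heuristically, a spatial hole of radius about $\delta^{1/(sp)}\rho$ survives for a time $\delta\rho^{sp}$ and has relative measure about $\delta^{d/(sp)}$.

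What is missing is a propagation estimate whose error is proportional to a sublevel measure, so that it can be bootstrapped.
\begin{itemize}
\item Since $u\ge0$ in $B_R$, for $h\le k$ one has $(u-h)_-\le k\1_{\{u<h\}}$ there. In $\sfI$, apply H\"older in time only against the tail of $u_-$.
\item Take levels $k=h_0>\dots>h_N=k/2$ and radii $2\rho=r_0>\dots>r_N=\rho$. Set $m_j:=\sup_{t\in(t_0,t_0+\delta\rho^{sp}]}|\{u(\cdot,t)<h_j\}\cap B_{r_j}|/|B_\rho|$, so $m_0\le 2^d$.
\item Apply \eqref{e.caccioppoli2}$_-$ on $B_{r_{j+1}}\subset B_{r_j}$ over $(t_0,t]$. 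The initial term is at most $ck^p\nu_\ast|B_{2\rho}|$. This gives $m_{j+1}\le C_N\big(\nu_\ast+\delta^{\eps/(p-1+\eps)}m_j\big)$, hence $m_N\le C_N\big(\nu_\ast+\delta^{N\eps/(p-1+\eps)}\big)$.
\item Fix $N$ with $N\eps/(p-1+\eps)>q$, then choose $\delta$, then $\nu_\ast$.
\item Apply Lemma~\ref{t.DGtype-NT} at level $k/2$, radius $\rho/2$ and parameter $2^{sp-1}\delta$, on the full window $B_\rho\times(t_0,t_0+\delta\rho^{sp}]$. This yields exactly $B_{\rho/2}\times(t_0+\frac12\delta\rho^{sp},t_0+\delta\rho^{sp}]$.
\end{itemize}
Integrating only over $(\frac14\delta\rho^{sp},\delta\rho^{sp}]$, as you propose, would give the window $(\frac58\delta\rho^{sp},\delta\rho^{sp}]$ instead. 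The tail hypothesis is centred at $t_0$ rather than at the vertex $t_0+\delta\rho^{sp}$; this is harmless, because the proof of Lemma~\ref{t.DGtype-NT} uses the tail only on the cylinder's time interval. For comparison, the paper gives no self-contained argument here. It defers to \cite[Lemmas 5.1 and 5.2]{CN26}, with the $\mathfrak{g}_-$ term treated as in Lemma~\ref{t.DGtype-NT}.
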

\begin{proof}
The arguments are identical to those in~\cite[Lemmas 5.1 and 5.2]{CN26}, modulo the modifications required for the $\mathfrak{g}_-$ term as detailed in Lemma~\ref{t.DGtype-NT}.
\end{proof}

\subsection{Nonlocal weak Harnack inequality}

Having the above measure theoretical lemmas at hand, we deduce the subsequent nonlocal weak Harnack inequality.



\begin{theorem}[Weak Harnack inequality]\label{t.weakHarnack2-NT}
Let $u $ be a weak super-solution to~\eqref{e.NT}-\eqref{e.kernel} in the sense of Definition~\ref{def-of-NT} such that $u \geq 0$ in $\cQ_R$. Fix $\eps \in (0,\infty]$. There exists $\eta \in (0,1)$ depending only on $\data$ and $\eps$ such that
\begin{align*}
\inf_{B_\rho(x_0)} u(t)&+\left(\frac{\rho}{R}\right)^{\frac{sp\eps}{(p-1)(p-1+\eps)}}\left(\dashint_{I_R(t_0)}\tail \big(u_-(t)\,;B_R(x_0)\big)^{p-1+\eps}\d{t}\right)^{\nicefrac{1}{(p-1+\eps)}} \\
& \geq \eta \left[\left(\biint_{Q_{2\rho}(z_0)}u^{p-1}\dxt \right)^{\nicefrac{1}{(p-1)}}+\left(\dashint_{t_0-(2\rho)^{sp}}^{t_o} \tail \big(u_+(t)\,;B_{2\rho}(x_0)\big)^{p-1}\d{t} \right)^{\nicefrac{1}{(p-1)}}\right]
\end{align*}
for almost every $t  \in \left(t_0+\frac{3}{4}(4\rho)^{sp}, t_0+(4\rho)^{sp} \right]$, provided that
\[
B_{2\rho} (x_0)\times \left(t_0-(2\rho)^{sp}, t_0+6(4\rho)^{sp}\right] \subset \cQ_R \Subset \Omega_T.
\]
\end{theorem}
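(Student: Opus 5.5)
The plan is to argue by contradiction on the level $k$, combining Lemma~\ref{t.newmt-NT}, Lemma~\ref{t.mt-NT}, Lemma~\ref{Lm:shrinking-FT} and Lemma~\ref{t.weakDGtype-NT}, in the spirit of \cite[Section 5]{CN26}. Set $z_0=0$ and define
\[
k:=\eta_0\left[\Big(\biint_{Q_{2\rho}}u^{p-1}\dxt\Big)^{\frac{1}{p-1}}+\Big(\dashint_{-(2\rho)^{sp}}^{0}\tail\big(u_+(t);B_{2\rho}\big)^{p-1}\d{t}\Big)^{\frac{1}{p-1}}\right],
\]
with $\eta_0$ small and depending only on $\data$. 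If the tail term $\mathcal{T}_-:=(\rho/R)^{\frac{sp\eps}{(p-1)(p-1+\eps)}}(\dashint_{I_R}\tail(u_-;B_R)^{p-1+\eps})^{1/(p-1+\eps)}$ already dominates a fixed small multiple $\bar\eta k$, the inequality holds trivially with $\eta\sim\bar\eta\eta_0$. So I assume $\mathcal{T}_-\le \bar\eta k$ throughout, where $\bar\eta$ is the product of all smallness constants fixed below. Under this assumption every tail hypothesis of the lemmas is met.

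\emph{Step 1 (measure information at some time).} Apply Lemma~\ref{t.newmt-NT} on $Q_{2\rho}$ with level $2k$ (the smaller level is admissible since $\mathcal{T}_-\le k$). The right-hand side there is $C(2k)^{p-1}$ divided by $\max\{(u^{p-1})_{Q_{2\rho}},\ \dashint\tail(u_+;B_{2\rho})^{p-1}\}$, and this maximum is $\gtrsim k^{p-1}/\eta_0^{p-1}$. Hence it is at most $C\eta_0^{p-1}|B_{2\rho}|\le\frac12|B_{2\rho}|$ once $\eta_0$ is small. This gives a time $s_0\in(-(2\rho)^{sp},0]$ with
\[
|\{u(\cdot,s_0)>2k\}\cap B_{2\rho}|\ge \tfrac12|B_{2\rho}|.
\]
Doubling the radius to $4\rho$ yields density $\alpha=2^{-d-1}$ in $B_{4\rho}$.

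\emph{Step 2 (propagation and shrinking).} Lemma~\ref{t.mt-NT} with this $\alpha$ gives $|\{u(\cdot,t)\ge\eta_1 k\}\cap B_{4\rho}|\ge\frac{\alpha}{2}|B_{4\rho}|$ for all $t\in(s_0,s_0+\delta_1(4\rho)^{sp}]$, where $\eta_1,\delta_1$ depend only on $\data,\eps$. This window is too short to reach the target interval, so I would iterate along a chain of times. This is possible because $\alpha$ stays fixed after each step while the level drops by the factor $\eta_1$, and the number of steps is bounded, roughly $6\cdot 4^{sp}/\delta_1$ at most. Alternatively, I would run the expansion-of-positivity scheme: on a slab of length $\delta(4\rho)^{sp}$, Lemma~\ref{Lm:shrinking-FT} with $\sigma=2^{-j}$ makes $|\{u\le 2^{-j}\eta_1k\}\cap Q|\le C 2^{-j(p-1)}/(\delta\alpha)\,|Q|$. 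Choosing $j$ so that this is below $\nu\,|Q|$ from Lemma~\ref{t.DGtype-NT}, that lemma gives $u\ge 2^{-j-1}\eta_1 k$ on a full cylinder, hence at some time on a whole ball. Lemma~\ref{t.weakDGtype-NT}, applied repeatedly with fixed $\delta$, then transports the pointwise positivity forward in time. Each application costs a fixed factor $4$ in the level and at most doubles the radius where needed. After a bounded number of steps it covers $B_\rho\times(\frac34(4\rho)^{sp},(4\rho)^{sp}]$, using the inclusion $B_{2\rho}\times(-(2\rho)^{sp},6(4\rho)^{sp}]\subset\cQ_R$ to keep all cylinders admissible.

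The final lower bound is $u\ge\eta_\ast k$ with $\eta_\ast(\data,\eps)$, provided every tail smallness condition $\mathcal{T}_-\le c_j k$ holds. Choosing $\bar\eta$ as the minimum of these $c_j$ closes the dichotomy and yields the claim with $\eta=\min\{\eta_\ast,\bar\eta\}\eta_0$.

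The main obstacle is the bookkeeping of time. The starting time $s_0$ is uncontrolled within $(-(2\rho)^{sp},0]$, so the forward propagation must work uniformly in $s_0$ and land exactly in the target window. It must also keep the tail conditions in the form $(\rho'/R)^{\cdots}$ with radii comparable to $\rho$, since shrinking radii only help those conditions. I would handle the arbitrary $s_0$ by first propagating the measure information via Lemma~\ref{t.mt-NT} up to a common time near $0$. From there, a fixed deterministic chain of Lemma~\ref{t.weakDGtype-NT} applications reaches the target window.
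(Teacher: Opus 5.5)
Your outer structure is fine: the dichotomy on the $u_-$-tail, and Lemma~\ref{t.newmt-NT} on $Q_{2\rho}$ giving a time $s_0\in(-(2\rho)^{sp},0]$ with $|\{u(\cdot,s_0)>2k\}\cap B_{2\rho}|\ge\frac12|B_{2\rho}|$. A DiBenedetto-type expansion of positivity through Lemmas~\ref{t.mt-NT}, \ref{Lm:shrinking-FT} and \ref{t.DGtype-NT} is also a legitimate alternative to the route the paper points to, which combines Lemmas~\ref{t.newmt-NT}, \ref{t.weakDGtype-NT} and Theorem~\ref{t.boundedness-NT} as in \cite[Section 5.2]{CN26}. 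But the forward propagation, which is the substance of the theorem, does not work as you describe it.

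\emph{Lemma~\ref{t.mt-NT} does not keep $\alpha$ fixed.} It turns density $\alpha$ into density $\alpha/2$, with $\delta=\delta_0\alpha^{\gamma}$ and $\gamma=(d+p+1)\frac{p-1+\eps}{\eps}>4$. Iterated at your scale $4\rho$, its windows add up to at most $\delta_0\alpha^{\gamma}(1-2^{-\gamma})^{-1}(4\rho)^{sp}<\frac{1}{15}(4\rho)^{sp}$. The gap from $s_0$ to the target window is at least $\frac34(4\rho)^{sp}$. So neither the chain of times nor ``propagating the measure information up to a common time near $0$'' can be realized.

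\emph{Lemma~\ref{t.weakDGtype-NT} cannot do the transport either.} It takes smallness of $\{u<k\}$ in $B_{2r}$ to positivity in $B_{r/2}$. Positivity on $B_{r/2}$ gives only superlevel density $4^{-d}$ in $B_{2r}$, so the lemma can be reapplied essentially only at radius $r/4$. The radius therefore shrinks by $4$ at each step, the time advances $\delta r_n^{sp}$ are summable, and you never get back to $B_\rho$; the lemma never ``doubles the radius''.

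\emph{The scale $4\rho$ is not admissible.} Running Lemmas~\ref{Lm:shrinking-FT} and \ref{t.DGtype-NT} at that scale needs $B_{8\rho}\subset B_R$. The hypothesis only forces $R\ge 4\cdot 6^{1/sp}\rho$, which is smaller than $8\rho$ once $sp>\log_2 6$, and $R=R_0$ is exactly this in Theorem~\ref{t.fullHarnack-NT}.

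The missing ingredient is a cycle that restores full positivity on the same ball, so that $\alpha$, $\delta$ and the loss of level stay fixed. Work at scale $2\rho$; no doubling is needed, since Step~1 already gives density $\frac12$ in $B_{2\rho}$. Suppose $\{u(\cdot,t)\ge\ell\}$ has density $\alpha\ge 2^{-d}$ in $B_{2\rho}$. Apply Lemma~\ref{t.mt-NT} at radius $2\rho$. Then apply Lemma~\ref{Lm:shrinking-FT} on $Q_{2\rho,\delta(2\rho)^{sp}}$ with top time $t+\delta(2\rho)^{sp}$, which uses only $B_{4\rho}\subset B_R$. Then Lemma~\ref{t.DGtype-NT} yields $u\ge c_\ast\ell$ on $B_\rho\times(t+\frac12\delta(2\rho)^{sp},t+\delta(2\rho)^{sp}]$, with $c_\ast,\delta$ depending only on $\data,\eps$. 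Positivity on $B_\rho$ is again density $2^{-d}$ in $B_{2\rho}$, so the cycle can be restarted from every time of the previous window. With $h=\delta(2\rho)^{sp}$, the $n$-th cycle then covers $(s_0+\frac n2 h,s_0+nh]$, and these windows are contiguous. Hence $N\simeq(2^{sp}+1)/\delta$ cycles cover $(\frac34(4\rho)^{sp},(4\rho)^{sp}]$ for every $s_0\in(-(2\rho)^{sp},0]$, with $u\ge c_\ast^N k$ there, provided $\bar\eta\lesssim c_\ast^N$. With this cycle in place your dichotomy closes.
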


The proof is a combination of Lemmas~\ref{t.newmt-NT},~\ref{t.weakDGtype-NT} and Theorem~\ref{t.boundedness-NT}. The reader can also consult the argument~\cite[Section 5.2]{CN26} for a more precise proof.

\subsection{Proof of Theorem~\ref{t.fullHarnack-NT}}

\begin{proof}[Proof of Theorem~\ref{t.fullHarnack-NT}]
The proof is in the style of Trudinger's approach, since it uses the chaining of the Weak Harnack inequality with a refined sup-bound. Nevertheless, the proved strong measure-theoretical properties allow for a Weak Harnack inequality with a positive exponent and the control of the tail of $u_+$. Let's start. Without loss of generality, we may assume $z_0=(x_0,t_0)=(0,0)$. Let \[M:=\sup_{Q_{\rho, \nicefrac{(2\rho)^{sp}}{2}} }u<\infty\,.\] We assume that $u$ is nonnegative $\cQ_{R}$ with $R \in (0, \infty)$ being specified later. Appealing to Theorem~\ref{t.boundedness-NT} with $\nu=p-1$, $\sigma=\nicefrac{1}{2}$ , $\theta=(2\rho)^{sp}$ and replacing $\rho$ by $2\rho$. Notice that the relevant constants depend only on $\data$. Thus, we have that
\begin{align*}
M \leq C\left(\dashint_{-(2\rho)^{sp}}^0 \tail\big(u_+(t) ; B_\rho\big)^{p-1}\d{t}\right)^{\nicefrac{1}{(p-1)}}+C\left(\biint_{Q_{2\rho}}u_+^{p-1}\dxt\right)^{\nicefrac{1}{(p-1)}}
\end{align*}
for a constant $C(\data)<\infty$. As for the tail term, we can estimate that
\begin{align*}
\tail\big(u_+(t)\,; B_\rho\big)^{p-1}
&\leq \frac{1}{2^{sp}}\tail\big(u_+(t) ; B_{2\rho}\big)^{p-1}+C(d)\dashint_{B_{2\rho}}u_+^{p-1}(t)\d{x}.
\end{align*}
On the other hand, by Theorem~\ref{t.weakHarnack2-NT} there exists $\eta(\data) \in (0,1)$ such that
\begin{align*}
&\inf_{B_\rho\times \left(\frac{3}{4}(4\rho)^{sp}, (4\rho)^{sp} \right]}u+\left(\frac{\rho}{R}\right)^{\frac{sp\eps}{(p-1)(p-1+\eps)}}\left(\dashint_{I_R} \tail \big(u_-(t)\,;B_R\big)^{p-1+\eps}\d{t}\right)^{\nicefrac{1}{(p-1+\eps)}} \\
& \quad \quad \quad \geq \eta \left[\left(\biint_{Q_{2\rho}}u^{p-1}\dxt \right)^{\nicefrac{1}{(p-1)}}+\left(\dashint_{-(2\rho)^{sp}}^{0} \tail \big(u_+(t)\,;B_{2\rho}\big)^{p-1}\d{t} \right)^{\nicefrac{1}{(p-1)}}\right],
\end{align*}
provided that $B_{2\rho}\times \left(-(2\rho)^{sp}, 6(4\rho)^{sp}\right] \subset \cQ_R \Subset \Omega_T$. Now, we control the tail as follows:
\begin{align}\label{e.tail.result}
&\left(\dashint_{-\frac{1}{2}(2\rho)^{sp}}^0 \tail \big(u_+(t) ; B_\rho \big)^{p-1}\d{t}\right)^{\nicefrac{1}{(p-1)}} \notag\\
&\quad \quad \quad \leq C(\data)\left[M+\left(\frac{R}{\rho}\right)^{\nicefrac{sp}{(p-1+\eps)}}\left(\dashint_{I_R} \tail \big(u_-(t)\,;B_R\big)^{p-1+\eps}\d{t}\right)^{\nicefrac{1}{(p-1+\eps)}}\right]. 
\end{align}
In fact, we apply~\eqref{e.caccioppoli2}$_-$ with $Q_{\rho,\frac{1}{2}(2\rho)^{sp}} \subset Q_{2\rho,(2\rho)^{sp}}$ and $k=2M$. Using this and the fact that $u \geq 0$ in $Q_{2\rho,(2\rho)^{sp}} \subset \cQ_R$, we estimate that, for a constant $c(\data)<\infty$, 
\begin{align*}
&\iint_{Q_{\rho,\frac{1}{2}(2\rho)^{sp}}}(u(x,t)-2M)_-\left[\int_{\R^d} \frac{(u(y,t)-2M)_+^{p-1}}{|x-y|^{d+sp}}\d{y}\right]\dxt\\
&\quad  \leq \frac{c}{\rho^{sp}}\iint_{Q_{2\rho,(2\rho)^{sp}}}(u-2M)_-^p\dxt \\
&\quad \quad \quad \quad +\frac{c}{\rho^{sp}}\iint_{Q_{2\rho,(2\rho)^{sp}}}(u-2M)_-\tail \big((u-2M)_-\,;B_{2\rho}\big)^{p-1}\dxt\\
&\quad  \leq  cM^p\rho^d+cM\rho^d \dashint_{-(2\rho)^{sp}}^0\tail \big(u_-(t)\,;B_{2\rho}\big)^{p-1}\d{t}.
\end{align*}
We next estimate the left side of the last inequality. Observe that
\[
|x-y| \leq 2|y|, \quad \forall x \in B_\rho, \quad \forall y \in \R^d \setminus B_\rho.
\]
This, together with the fact that $0 \leq u \leq M$ in $Q_{\rho,\frac{1}{2}(2\rho)^{sp}} \subset \cQ_R$ and the following elemental inequality~\cite[Lemma 4.4]{Coz17}
\[
(u(y,t)-2M)_+^{p-1} \geq (1 \wedge  2^{2-p}) u_+(y,t)^{p-1}-(2M)^{p-1},
\]
yields that
\begin{align*}
\iint_{Q_{\rho,\frac{1}{2}(2\rho)^{sp}}}&(u(x,t)-2M)_-\left[\int_{\R^d} \frac{(u(y,t)-2M)_+^{p-1}}{|x-y|^{d+sp}}\d{y}\right]\dxt \\
&\geq cM \left[|B_\rho|\int_{-\frac{1}{2}(2\rho)^{sp}}^0 \left(\int_{\R^d \setminus B_\rho} \frac{u_+(y,t)^{p-1}-(2M)^{p-1}}{(2|y|)^{d+sp}}\d{y}\right)\d{t}\right] \\
&\geq cM\rho^d \dashint_{-\frac{1}{2}(2\rho)^{sp}}^0 \tail \big(u_+(t)\,;B_{\rho}\big)^{p-1}\d{t}-cM^p\rho^d
\end{align*}
for a constant $c(d,s,p)<\infty$. After combining the above displays, rearranging provides that
\begin{align*}
&\left(\dashint_{-\frac{1}{2}(2\rho)^{sp}}^0\tail \big(u_+(t)\,;B_{\rho}\big)^{p-1}\d{t}\right)^{\nicefrac{1}{(p-1)}} \\
& \quad \quad \quad \quad \leq c \left[M+\left(\dashint_{-(2\rho)^{sp}}^0\tail \big(u_-(t)\,;B_{2\rho}\big)^{p-1}\d{t} \right)^{\nicefrac{1}{(p-1)}}\right].
\end{align*}
Finally, rearranging the tail term on the right side and using H\"{o}lder inequality conclude~\eqref{e.tail.result}.

Altogether, combining the preceding three estimates and rearranging yield that
\begin{align*}
&\sup_{B_{\rho} \times \left(-\frac{1}{2}(2\rho)^{sp}, 0\right]}u +\left(\dashint_{-\frac{1}{2}(2\rho)^{sp}}^{0} \tail \big(u_+(t) ; B_\rho \big)^{p-1}\d{t}\right)^{\frac{1}{p-1}} \notag\\[2pt]
& \quad \leq C_\mathsf{H}\left[ \inf_{B_\rho \times \left(\frac{3}{4}(4\rho)^{sp}, (4\rho)^{sp} \right]}u \right. \notag\\[2pt]
&\left. \quad \quad \quad \quad \quad +\left[\left(\frac{R}{\rho}\right)^{\frac{sp}{p-1+\eps}}+\left(\frac{\rho}{R}\right)^{\frac{sp\eps}{(p-1)(p-1+\eps)}} \right]\left(\dashint_{I_R} \tail \big(u_-(t)\,;B_R\big)^{p-1+\eps}\d{t}\right)^{\frac{1}{p-1+\eps}}\right]\,.
\end{align*}
Finally, selecting $R=4\cdot 6^{\nicefrac{1}{sp}}\rho=:R_0$ in the last display concludes, taking into account that $B_{2\rho}\times \left(-(2\rho)^{sp}, 6(4\rho)^{sp}\right] \subset \cQ_{R_0}$, that the statement of Theorem~\ref{t.fullHarnack-NT}. The proof is complete.
\end{proof}
\makeatletter
\renewcommand{\thesection}{\Alph{section}.\arabic{subsection}}
\makeatother

\appendix

\makeatletter
\renewcommand{\thefigure}{\Alph{section}.\arabic{figure}}
\@addtoreset{figure}{section}
\makeatother

\subsubsection*{\bf Acknowledgments}
The authors appreciate the \emph{Erwin Schr\"{o}dinger Institut f\"{u}r Mathematik und Physik of Vienna} for its kind hospitality during the Workshop ``Degenerate
and Singular PDEs'' held in 24--28, February 2025, where this collaboration began. S.C. acknowledges the partial funding of GNAMPA (INdAM), and the support department of Mathematics of the university of Bologna.



\end{document}